\DeclareSymbolFont{usualmathcal}{OMS}{cmsy}{m}{n}
\DeclareSymbolFontAlphabet{\mathcal}{usualmathcal}
\newtheorem{theorem}{Theorem}[section]
\newtheorem{lemma}[theorem]{Lemma}
\newtheorem{claim}[theorem]{Claim}
\theoremstyle{remark}
\newtheorem{ex}[theorem]{Example}
\newtheorem{dfn}[theorem]{Definition}
\newtheorem{remark}[theorem]{Remark}
\def\A{\mathcal{A}}
\def\B{\mathcal{B}}
\def\P{\mathcal{P}}
\def\R{\mathbb{R}}
\def\Z{\mathbb{Z}}
\def\N{\mathbb{N}}
\def\D{\mathbb{D}}
\def\eps{\varepsilon}
\def\Q{\mathbb{Q}}
\def\tp{t_+}
\def\tmi{t_-}
\def\tm1{t_{-1}}
\def\t1{t_1}
\newcommand{\StatBars}[6][]
{		\tikz;
		\draw[thick] (#2, #3) -- (#2+#4, #3);
		\node[fill = #5, shape = circle, scale = .5] at (#2, #3) {};
		\node[fill = #6, shape = circle, scale = .5] at (#2+#4, #3) {};
		
}
\newcommand{\GrowBars}[6][]
{		\tikz;
		\draw[thick] (#2, #3) -- (#2+#4, #3);
		\node[fill = #5, shape = circle, scale = .5] at (#2, #3) {};
		\node[fill = #6, shape = circle, scale = .5] at (#2+#4, #3) {};
		\draw[-triangle 90] (#2+#4-0.45,#3)-- +(0.25,0);

}
\newcommand{\GrowBarsLeft}[6][]
{		\tikz;
		\draw[thick] (#2, #3) -- (#2 - #4, #3);
		\node[fill = #5, shape = circle, scale = .5] at (#2, #3) {};
		\node[fill = #6, shape = circle, scale = .5] at (#2 - #4, #3) {};
		\draw[-triangle 90] (#2 - #4 + 0.45,#3) -- (#2 - #4 + 0.45 - 0.25, #3);

}
\newcommand{\GrowBarsLeftLight}[6][]
{		\tikz;
		\draw[thick, black!50] (#2, #3) -- (#2 - #4, #3);
		\node[fill = #5!50, shape = circle, scale = .5] at (#2, #3) {};
		\node[fill = #6!50, shape = circle, scale = .5] at (#2 - #4, #3) {};
		\draw[-triangle 90, black!50] (#2 - #4 + 0.45,#3) -- (#2 - #4 + 0.45 - 0.25, #3);

}
\newcommand{\ParBars}[5][]
{		\tikz;
		\draw[thick] (#2, #3) -- (#2 #5 0.75, #3);
		\node[fill = #4, shape = circle, scale = .5] at (#2, #3) {};
		\draw[dotted] (#2 #5 0.75, #3) -- (#2 #5 0.75 #5 0.75, #3);
		
}
\newcommand{\VertDash}[3][]
{		\tikz;
		\draw[dashed] (#2, 0) -- (#2, 5.5);
		\draw[-] (#2, -0.5) -- (#2, 0);
		\node[below] at (#2, -0.5) {#3};
}
\newcommand{\ParBarsLight}[5][]
{		\tikz;
		\draw[thick, black!50] (#2, #3) -- (#2 #5 0.75, #3);
		\node[fill = #4!50, shape = circle, scale = .5] at (#2, #3) {};
		\draw[dotted, black!50] (#2 #5 0.75, #3) -- (#2 #5 0.75 #5 0.75, #3);
		
}
\newcommand{\GrowParBarsLight}[4][]
{		\tikz;
		\draw[thick, black!50] (#2 - 0.75, #3) -- (#2, #3);
		\draw[dotted, black!50] (#2 - 1.5, #3) -- (#2 - 0.75, #3);
		\draw[-triangle 90, black!50] (#2 - 0.45, #3) -- +(0.25,0);
		\node[fill = #4!50, shape = circle, scale = .5] at (#2, #3) {};
}
\begin{document}

\title{A Quasi-Isometric Embedding into the group of Hamiltonian Diffeomorphisms with Hofer's Metric}
\author{Bret Stevenson}

\address{Department of Mathematics, University of Georgia, Athens, GA 30602}

\email{bret@math.uga.edu}

\begin{abstract}
We construct an embedding $\Phi$ of $[0,1]^{\infty}$ into $Ham(M, \omega)$, the group of Hamiltonian diffeomorphisms of a suitable closed symplectic manifold $(M, \omega)$.  We then prove that $\Phi$ is in fact a quasi-isometry.  After imposing further assumptions on $(M, \omega)$, we adapt our methods to construct a similar embedding of $\R \oplus [0,1]^{\infty}$ into either $Ham(M, \omega)$ or $\widetilde{Ham}(M, \omega)$, the universal cover of $Ham(M, \omega)$.  Along the way, we prove results related to the filtered Floer chain complexes of radially symmetric Hamiltonians.  Our proofs rely heavily on a continuity result for barcodes (as presented in \cite{Uzi}) associated to filtered Floer homology viewed as a persistence module.
\end{abstract}

\maketitle

\tableofcontents

\begin{section}{Introduction}

Let $(M, \omega)$ be a $2n$-dimensional closed symplectic manifold.  A smooth function $H: \R / \Z \times M \rightarrow \R$ defines a time-dependent vector field $X_H(t, \cdot)$ on $M$ by
\[
\omega(X_H(t, \cdot), \cdot) = -d(H_t),
\]
where $H_t = H(t, \cdot).$  A Hamiltonian isotopy $\phi_H^t$ is defined by letting $\phi_H^t$ be the time-$t$ map of the flow of $X_H(t, \cdot)$, while a Hamiltonian diffeomorphism $\phi$ is the time-1 map $\phi_H^1$ of such an isotopy.  The collection of all Hamiltonian diffeomorphisms forms a group $Ham(M, \omega)$, and to every $\phi$ of $Ham(M, \omega)$ we may associate its \textit{Hofer norm}
\[
||\phi||_{H} = \text{inf} \left\{ \int_0^1 \left( \underset{M}{\text{max}}(H_t) - \underset{M}{\text{min}}(H_t) \right) \, dt \, | \, \phi_H^1 = \phi \right\}.
\]
\textit{Hofer's metric} $d_{H}$ on $Ham(M, \omega)$ is then defined as
\[
d_{H}(\phi, \psi) = ||\phi^{-1} \circ \psi||_{H}
\]
for any $\phi, \psi \in Ham(M, \omega)$.

Now let $[0,1]^{\infty}$ denote the set of all $[0,1]$-valued sequences with only finitely many non-zero entries, and for $a = \{a_i\}_{i \geq 1}, b = \{b_i\}_{i \geq 1} \in [0,1]^{\infty}$, let 
\[
||a - b||_{\ell^{\infty}} = \text{max}_i|a_i - b_i|.
\]
With these notations established, we may state our main theorem as follows.

\begin{theorem}\label{main-theorem}  Let $M$ be a closed symplectic manifold which is either monotone or negative monotone.  Suppose we may symplectically embed a ball $B(2\pi R)$ of radius $\sqrt{2R}$ into $M$, where if $M$'s rationality constant $\gamma$ is non-zero, we require $4 \pi R \leq \gamma$.   Then for any $\eps > 0$, there exists an embedding $\Phi : [0,1]^{\infty} \rightarrow Ham(M, \omega)$ satisfying
\[
2 \pi R||a - b||_{\ell^{\infty}} - \eps \leq d_{H}(\Phi(a), \Phi(b)) \leq 4 \pi R ||a - b||_{\ell^{\infty}}
\]
for any $a, b \in [0,1]^{\infty}$.  That is, $\Phi$ is a quasi-isometric embedding of $[0,1]^{\infty}$ into $\text{Ham}(M, \omega)$.

\end{theorem}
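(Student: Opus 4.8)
\section*{Proof proposal}

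The plan is to realize $\Phi$ through autonomous, radially symmetric Hamiltonians supported in the given ball $B = B(2\pi R)\subset M$. Inside $B$ pick pairwise disjoint round sub-balls $B_1,B_2,\dots$; their capacities necessarily shrink as $i\to\infty$, but this will not hurt us, because the filtered Floer data of a radial Hamiltonian concentrated in $B_i$ is governed by its \emph{profile} rather than by the size of $B_i$ (the action values are differences of tangent-line intercepts of the profile at integer-slope radii, and these scale with the height of the profile, not with $\mathrm{cap}(B_i)$). Fix, for each $i$, a nonnegative radial bump $g^{(i)}$ supported in $B_i$ with $\max g^{(i)} = 2\pi R$ --- chosen, using the explicit profiles whose filtered Floer complexes are computed in the preceding sections, so that the barcode of $\phi^1_{t\,g^{(i)}}$ consists of a fixed $t$-independent ``ambient'' part (coming from $M\setminus B_i$) together with one distinguished bar whose length tends to $4\pi R\,t$ as $\mathrm{cap}(B_i)\to 0$, and so that these distinguished bars, for different $i$, sit in disjoint windows of the action line. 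For $a=\{a_i\}\in[0,1]^{\infty}$ set $H_a = \sum_i a_i\,g^{(i)}$ (a finite, hence smooth, sum) and define $\Phi(a)=\phi^1_{H_a}$. Injectivity of $\Phi$ is immediate once the $g^{(i)}$ are taken generic (e.g.\ so that a rotation number of a fixed circle of $\Phi(a)$ inside $B_i$ recovers $a_i$).

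For the upper bound, note that since the $B_i$ are disjoint and the $g^{(i)}$ autonomous and radial, the time-$1$ flows in different balls commute and within $B_i$ the Hamiltonians $-a_i g^{(i)}$ and $b_i g^{(i)}$ Poisson-commute; hence $\Phi(a)^{-1}\circ\Phi(b) = \phi^1_{F_{a,b}}$ with $F_{a,b} = \sum_i (b_i-a_i)\,g^{(i)}$. Each summand $(b_i-a_i)g^{(i)}$ has constant sign, is supported in $B_i$, and is bounded in absolute value by $2\pi R\,|a_i-b_i|$. Disjointness of the supports then gives $\max F_{a,b}\le 2\pi R\,\|a-b\|_{\ell^\infty}$ and $\min F_{a,b}\ge -2\pi R\,\|a-b\|_{\ell^\infty}$, so $\int_0^1(\max F_{a,b}-\min F_{a,b})\,dt\le 4\pi R\,\|a-b\|_{\ell^\infty}$, and therefore $d_H(\Phi(a),\Phi(b))\le 4\pi R\,\|a-b\|_{\ell^\infty}$ straight from the definition of the Hofer norm.

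For the lower bound, write $c=\|a-b\|_{\ell^\infty}=|a_{i_0}-b_{i_0}|$ and compare the barcodes $\mathcal{B}(\Phi(a))$ and $\mathcal{B}(\Phi(b))$ of the associated filtered Floer persistence modules. Using the results on filtered Floer chain complexes of radially symmetric Hamiltonians together with the disjointness of the $B_i$, one shows that $\mathcal{B}(\Phi(a))$ is, up to an error that can be made smaller than any prescribed $\eps'>0$ by shrinking the $B_i$, the disjoint union of the fixed ambient barcode with, for each $i$, the distinguished bar of length $\approx 4\pi R\, a_i$; likewise for $b$, and the only difference between the two is that the $i_0$-th bar has length $\approx 4\pi R\, a_{i_0}$ in one and $\approx 4\pi R\, b_{i_0}$ in the other. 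Because the distinguished bars live in disjoint action windows, any matching must either match the two $i_0$-bars to each other --- costing at least $\frac12$ of their length, hence at least $2\pi R c$ --- or send both to the diagonal --- costing $2\pi R\max(a_{i_0},b_{i_0})\ge 2\pi R c$, where we used $\max(x,y)\ge|x-y|$ for $x,y\ge 0$. Hence the bottleneck distance between $\mathcal{B}(\Phi(a))$ and $\mathcal{B}(\Phi(b))$ is at least $2\pi R c-\eps'$. Feeding this into the continuity estimate of \cite{Uzi}, which bounds this bottleneck distance from above by $d_H(\Phi(a),\Phi(b))$, yields $d_H(\Phi(a),\Phi(b))\ge 2\pi R c-\eps' = 2\pi R\,\|a-b\|_{\ell^\infty}-\eps'$; taking $\eps'=\eps$ at the outset finishes the proof.

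The main obstacle is the step hidden in ``up to an error $\eps'$'': one must prove that, for a Hamiltonian assembled from several radial bumps in disjoint balls, the filtered Floer complex genuinely decomposes --- that the distinguished bars attached to different $B_i$ do not interact with one another or with the orbits in $M\setminus\bigcup_i B_i$, and that no bar wraps around modulo the rationality constant $\gamma$. This is exactly where monotonicity (or negative monotonicity) of $M$ and the quantitative hypothesis $4\pi R\le\gamma$ are needed: the latter keeps every distinguished bar, of length $\le 4\pi R$, below one period, so that it survives unambiguously in the Novikov-filtered picture. The analysis of radial Hamiltonians carried out in the earlier sections, combined with the $C^0$/Hofer-continuity of barcodes from \cite{Uzi}, does the real work here; the estimates displayed above are then elementary bookkeeping.
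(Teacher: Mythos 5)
Your overall architecture (radial bumps with disjoint supports, the trivial upper bound from Poisson-commutativity, a lower bound from barcode rigidity plus the continuity theorem of \cite{Uzi}) is in the spirit of the paper, but the quantitative heart of your lower bound is not just unproven --- it is impossible. You posit a distinguished \emph{finite} bar whose length tends to $4\pi R\,t$ for the flow of $t\,g^{(i)}$ with $0\le g^{(i)}\le 2\pi R$. Every finite bar length is bounded by the boundary depth $\beta$, and $\beta(\phi)\le\|\phi\|_H\le \int_0^1\bigl(\max H_t-\min H_t\bigr)dt$; for $t\,g^{(i)}$ this oscillation is $2\pi R\,t$, so no finite bar can be longer than $2\pi R\,t$ (accordingly, the paper's analysis produces a bar of length roughly $2\pi R$, not $4\pi R$). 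With the correct maximal bar length $\approx 2\pi R\,a_i$, your matching argument between $\mathcal{B}(\Phi(a))$ and $\mathcal{B}(\Phi(b))$ only yields a lower bound of about $\pi R\,\|a-b\|_{\ell^{\infty}}$ (half the difference of the two bar lengths, or half the longer bar if both are erased), a factor of $2$ short of the theorem. The paper avoids this loss by never comparing the two barcodes: it observes that $\Phi(a)^{-1}\circ\Phi(b)$ is generated by $\sum_i(b_i-a_i)\bar f_i$ and proves directly (Lemma \ref{big-lemma} together with the rescaling and continuity argument of Theorem \ref{pen-theorem}) that this single element has boundary depth at least $2\pi R\,\max_i|a_i-b_i|-(4\pi+7)\eps$, which is then a lower bound for its Hofer norm.

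Two further gaps. First, you bound the bottleneck distance by $d_H(\Phi(a),\Phi(b))$; the continuity statement actually available (Theorem \ref{BarCont}, i.e.\ Theorem 12.2 of \cite{Uzi}) compares the barcodes of two \emph{given} Hamiltonians, and passing to the infimum over all Hamiltonians generating the same time-one maps requires invariance of the filtered barcode under change of generating path, including the $\pi_1(Ham)$ recapping shift of the action filtration, which you do not address; the paper sidesteps this by working with the boundary depth, which is known to be an invariant of $\phi\in Ham(M,\omega)$ and Lipschitz for Hofer's metric (Theorem \ref{theorem-barely}). Second, the statements you defer to the earlier sections --- that the barcode of a sum of bumps in disjoint balls decomposes into non-interacting pieces plus an ambient part, and that each piece contains a long bar --- constitute the actual work of the paper: the long bar is produced only through the homotopies of piecewise-linear profiles, the kink-action theorems (Theorems \ref{cd-n+1-solo}, \ref{lemma-cd-n-1}, \ref{lemma-cu}), the case analysis in $N$, $\lambda$, $\gamma$, and the energy arguments when $\lambda=0$; no decomposition theorem for barcodes of disjointly supported Hamiltonians is proved or used there, and such a statement is not routine once recappings are allowed. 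As written, the proposal therefore has a genuine gap, and even after repairing the false $4\pi R$ claim it would prove the theorem only with the constant $\pi R$ in place of $2\pi R$.
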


Recall that a symplectic manifold is \textit{monotone} if $[\omega] |_{\pi_2(M)}  = \lambda c_1(TM)|_{\pi_2(M)}$ with $\lambda \geq 0$ and \textit{negative monotone} if the same relation holds but with $\lambda < 0$. Since the image of $c_1(TM)|_{\pi_2(M)}$ is a subgroup of $\Z$, the image of $[\omega]|_{\pi_2(M)}$ forms a discrete subgroup of $\R$ when $M$ is (negative) monotone; the \textit{minimal Chern number} $N$ is the non-negative generator of the image of $c_1(TM)|_{\pi_2(M)}$, and the \textit{rationality constant} $\gamma$ of $M$ is the non-negative generator of the image of $[\omega]|_{\pi_2(M)}$.  In particular, this paper has $N = 0$, $\gamma = 0$ when $c_1(TM)|_{\pi_2(M)} = 0$, $[\omega]|_{\pi_2(M)} = 0$, respectively; while this breaks from the convention of setting $N$, $\gamma$ equal to $\infty$ in such cases, we find it to be a worthwhile one for this work as it simplifies the discussion of  the various cases considered in the proof of Theorem \ref{main-theorem} (particularly Lemma \ref{big-lemma}).

Upon the introduction of Hofer's metric, it was natural to ask which symplectic manifolds $(M, \omega)$ yield $(Ham(M, \omega), d_H)$ with infinite diameter, and the appearances of results in this direction form a rich history; see \cite{LM}, \cite{Polt98}, \cite{Sch}, \cite{Ost}, \cite{EP}, and \cite{Mc}, for example.  Similarly, one may instead ask the broader question of which $Ham(M, \omega)$ admit quasi-isometric embeddings of multi-dimensional normed vector spaces.  This question already has partial answers, among which are results appearing in \cite{Py} and \cite{MikeSolo}.  Provided the existence of a closed Lagrangian $L \subset M$ which admits a Riemannian metric of non-positive curvature and has the inclusion-induced map $i_*: \pi_1(L) \rightarrow \pi_1(M)$ injective, Py shows that for any $m \in \N$ there exists an embedding $\phi: \Z^m \rightarrow Ham(M, \omega)$ and a constant $C_m > 0$ satisfying
\[
C_m^{-1}||a-b||_{\ell^{\infty}} \leq d_H(\phi(a), \phi(b)) \leq C_m||a-b||_{\ell^{\infty}}
\]
for any $a, b \in \Z^m$.  This result was generalized in \cite{MikeSolo}, in which Usher proves that if $M$ admits an autonomous Hamiltonian $H: M \rightarrow \R$ whose flow has all of its contractible periodic orbits constant, then there exists an embedding of $\R^{\infty}$ into $Ham(M, \omega)$ similar to the one presented in \cite{Py}.  It should be noted that Py's assumptions imply the existence of such an $H$, as explained in \cite{MikeSolo}.

While the conclusion of Theorem \ref{main-theorem} is much weaker than Usher's result and somewhat weaker than that of Py, its assumptions are quite mild and indeed do not lie entirely within the scope of these previous results.  For instance, Usher points out in \cite{MikeSolo} that any closed toric manifold $M$ will not admit an autonomous Hamiltonian $H$ as described in the previous paragraph, and so any such manifold which is also (negative) monotone (for example, $(S^2, \omega)$) is one for which Theorem \ref{main-theorem} asserts something new about the geometry of $(Ham(M, \omega), d_H)$.

The Hamiltonian diffeomorphisms which define our embedding are generated by radially symmetric functions $\bar{F}_i$ which are zero outside of $B(2\pi R)$ and of the form $\bar{f}_i \left( \tfrac{|z|^2}{2}\right)$, with $\bar{f}_i : [0, R] \rightarrow \R$, for $z \in B(2\pi R)$.  Each of our functions $\bar{f}_i$ are to have disjoint supports, each contained in $[R - \eps, R]$, so that the induced functions $\bar{F}_i$ have supports contained in the thin $2n$-dimensional annulus $\{ z \in B(2 \pi R) \, | \, 2R - 2\eps < |z|^2 < 2R\}$ near the boundary of $B(2\pi R)$.  (We note that using $\eps$ in this manner to construct our functions yields the inequality from Theorem \ref{main-theorem} with $\eps$ replaced by an appropriate scalar multiple.)  See Figure \ref*{sample_figures} for a piecewise linear version of one of our $\bar{f}_i$.  For an earlier application of such functions to questions of Hamiltonian dynamics, one may refer to \cite{Sey}, where Seyfaddini uses them to construct ``spectral killers.''  In fact, this paper employs several of the same strategies as \cite{Sey}, from the careful choices of perturbations of continuous, radially symmetric Hamiltonians, to the explicit enumerations of their actions.

\begin{figure}\label{sample_figures}

\begin{tikzpicture}[scale=.4]
		\draw[dashed] (-16, 4) -- (-3, 4);
		\draw[-] (-16,0) -- (-11,0);
		\draw[very thick] (-16, 0) -- (-11, 0);
		\draw[dotted] (-11, 0) -- (-7.5, 0);
		\draw[->] (-7.5,0) -- (18,0) node[right]{$r$};
		\draw[very thick] (-7.5, 0) -- (-3.95, 0) -- (-3,4) -- (-2.5, 0) -- (-2, 4) -- (-1.05, 0) -- (17, 0);	
		\draw[->] (-16, -0.5) -- (-16,6);
		\draw[-] (-7,-0.5) -- (-7, 0.5);
		\node[below] at (-7,-0.5) {$R - \eps$};
		\draw[-] (17,-0.5) -- (17, 0.5);
		\node[below] at (17,-0.5) {$R$};
		\node[below] at (-16, -0.5) {$0$};
		\draw[-] (-16.5, 4) -- (-15.5, 4);
		\node[left] at (-16.5, 4) {$2 \pi R$};
\end{tikzpicture}

\hspace*{0.80cm}
\begin{tikzpicture}[scale=.4]
		\draw[-] (-16,0) -- (-11,0);
		\draw[dotted] (-11, 0) -- (-7.5, 0);
		\draw[->] (-7.5,0) -- (18,0) node[right]{$r$};
		\draw[very thick] (-16, 0) -- (-11, 0);
		\draw[very thick] (-7.5, 0) -- (-5.45,0) -- (-5, 3.5) -- (-4.75, 0) -- (-4.5, 3.5) -- (-4.05, 0) -- (-3.95, 0) -- (-3,4) -- (-2.5, 0) -- (-2, 4) -- (-1.05, 0) -- (5.05, 0) -- (9, 1) -- (11, 0) -- (13, 1) -- (16.95, 0);	
		\draw[->] (-16, -0.5) -- (-16,6);
		\draw[-] (-7,-0.5) -- (-7, 0.5);
		\node[below] at (-7,-0.5) {$R - \eps$};
		\draw[-] (17,-0.5) -- (17, 0.5);
		\node[below] at (17,-0.5) {$R$};
		\node[below] at (-16, -0.5) {$0$};
\end{tikzpicture}


\caption{The top figure is a piecewise linear version of one of our functions $\bar{f}_i$.  The bottom figure is a piecewise linear version of some $\sum_i^{\infty} a_i \bar{f}_i$, which will induce the Hamiltonian diffeomorphism $\Phi(a)$ with $a = \{ a_i \}_{i \geq 1} \in [0,1]^{\infty}$.}

\end{figure}
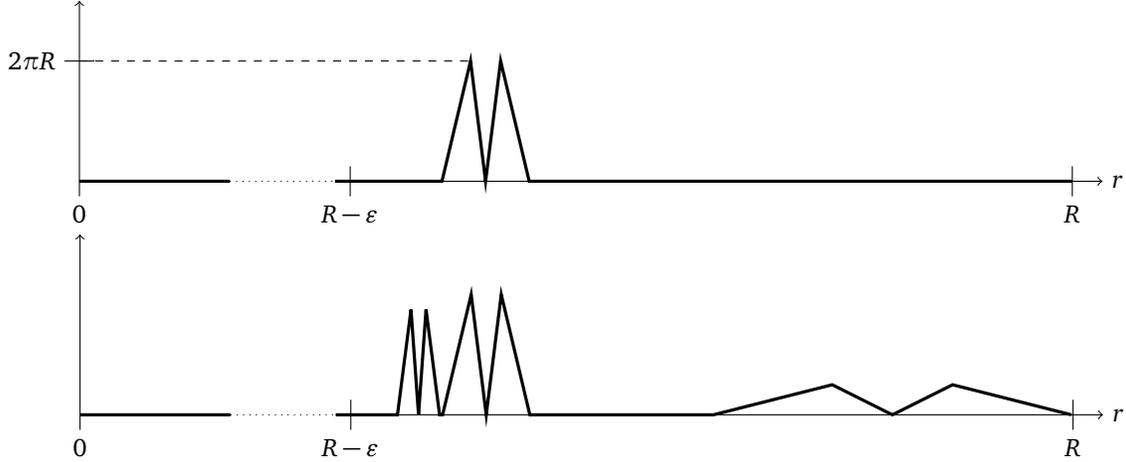

In an effort to build an analogous embedding of $\R \oplus [0,1]^{\infty}$ (where $\R \oplus [0,1]^{\infty}$ is defined similarly to $[0,1]^{\infty}$), we wish to find symplectic manifolds $(M, \omega)$ whose $Ham(M, \omega)$ admit a one-parameter family of diffeomorphisms $\phi_s$ satisfying $||\phi_s||_H \geq K \cdot s$ for some constant $K > 0$.  Such families can be shown to exist whenever there is a \textit{stable homogeneous Calabi quasi-morphism} $\mu: Ham(M, \omega) \rightarrow \R$; see \cite{EPZ} for a definition and details.  Using results from \cite{EP}, we have the following.

\begin{theorem}\label{quasimorphism-version}
Let $(M, \omega)$ and $B(2 \pi R)$ be as in the statement of Theorem \ref{main-theorem}, and further assume that

\begin{itemize}

\item  there exists a stable homogeneous Calabi quasi-morphism $\mu: Ham(M, \omega) \rightarrow \R$.

\item  $B(2 \pi R)$ is displaceable in $M$, i.e. there exists a Hamiltonian diffeomorphism $\phi: M \rightarrow M$ such that $\phi(B(2 \pi R)) \cap B(2 \pi R) = \emptyset$.

\end{itemize}

Then for any $\eps > 0$, there exits an embedding $\overline{\Phi}: \R \oplus [0,1]^{\infty} \rightarrow Ham(M, \omega)$ so that for any $a, b \in \R \oplus [0,1]^{\infty}$,
\[
C ||a - b||_{\ell^{\infty}} - \eps \leq d_{H}(\overline{\Phi}(a), \overline{\Phi}(b)) \leq 4 \pi R ||a - b||_{\ell^{\infty}},
\]
where 
\[
C = \left( \frac{2 \pi R \cdot \text{Vol}(B(2 \pi R))}{\text{Vol}(M)} - \eps \right).
\]
Here, $\text{Vol}(B(2 \pi R))$ and $\text{Vol}(M)$ are the symplectic volumes of $B(2 \pi R)$ and $M$, respectively.
\end{theorem}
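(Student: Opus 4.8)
The plan is to re-use, unchanged, the radially symmetric bump functions $\bar F_i$ from the proof of Theorem \ref{main-theorem}, and to absorb the extra $\R$-summand using one additional radially symmetric bump $G$, supported in the part of $B(2\pi R)$ that the $\bar F_i$ leave empty. Concretely, I would take $G = \bar g(|z|^2/2)$ on $B(2\pi R)$ (and $0$ elsewhere), with $\bar g \co [0,R] \to [0, 2\pi R]$ smooth, supported in a radial interval that is disjoint from --- and separated by a flat buffer of width on the order of $\eps$ from --- the interval $[R-\eps, R]$ containing the supports of all the $\bar f_i$; moreover $\bar g$ can be taken equal to $2\pi R$ off an arbitrarily small-measure set, so that $\tfrac{1}{\text{Vol}(M)}\int_M G\,\omega^n > \tfrac{2\pi R\,\text{Vol}(B(2\pi R))}{\text{Vol}(M)} - \eps$. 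For $a = (a_0, a') \in \R \oplus [0,1]^\infty$ I then set $H_a := a_0 G + \sum_{i \ge 1} a_i \bar F_i$ and $\overline\Phi(a) := \phi_{H_a}^1$. Since $G, \bar F_1, \bar F_2, \dots$ are all radially symmetric and supported in $B(2\pi R)$, they pairwise Poisson-commute, so their time-one maps commute and $\overline\Phi(a)^{-1}\overline\Phi(b) = \phi_{H_{b-a}}^1$ with $H_{b-a} = (b_0-a_0)G + \sum_i (b_i-a_i)\bar F_i$; hence $d_H(\overline\Phi(a), \overline\Phi(b)) = \|\phi_{H_{b-a}}^1\|_H$. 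Injectivity of $\overline\Phi$ would follow exactly as for $\Phi$ in Theorem \ref{main-theorem}: $\phi_{H_a}^1 = \phi_{H_b}^1$ forces the radial derivatives of $H_a$ and $H_b$ to agree modulo $2\pi\Z$ at every radius; continuity together with their vanishing near $\partial B(2\pi R)$ upgrades this to $H_a = H_b$, and then the disjointness of the supports of $\bar g, \bar f_1, \bar f_2, \dots$ recovers $a$ from $H_a$.

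The upper bound is then immediate. Each summand of $H_{b-a}$ is of the form $c\,\bar F_i$ or $c\,G$ with $|c| \le \|a-b\|_{\ell^\infty}$ and hence has $C^0$-norm at most $2\pi R\,\|a-b\|_{\ell^\infty}$; since the summands have pairwise disjoint supports, $\|H_{b-a}\|_{C^0} \le 2\pi R\,\|a-b\|_{\ell^\infty}$, so $\mathrm{osc}(H_{b-a}) \le 4\pi R\,\|a-b\|_{\ell^\infty}$, and because $H_{b-a}$ is autonomous, $\|\phi_{H_{b-a}}^1\|_H \le 4\pi R\,\|a-b\|_{\ell^\infty}$.

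For the lower bound I would produce two estimates and take the larger. First, normalize $\mu$ so that its Calabi property reads $\mu(\phi_K^1) = \tfrac{1}{\text{Vol}(M)}\int_M K\,\omega^n$ for autonomous $K$ supported in a displaceable open set; then stability gives $|\mu(\psi)| \le \|\psi\|_H$ for all $\psi$ (\cite{EP}, \cite{EPZ}). Since $H_{b-a}$ is supported in $B(2\pi R)$, which is displaceable by hypothesis, this yields
\[
\|\phi_{H_{b-a}}^1\|_H \ge |\mu(\phi_{H_{b-a}}^1)| = \frac{1}{\text{Vol}(M)}\Big| (b_0-a_0)\int_M G\,\omega^n + \sum_i (b_i-a_i)\int_M \bar F_i\,\omega^n \Big| \ge C\,|a_0-b_0| - O(\eps),
\]
where $C$ is the constant in the statement; here the annular term is $O(\eps)$ uniformly in $a,b$ because $|b_i-a_i| \le 1$ and the $\bar F_i$ live in the thin annulus $\{2R - 2\eps < |z|^2 < 2R\}$, of volume $O(\eps)$, while $\tfrac{1}{\text{Vol}(M)}\int_M G\,\omega^n > C$ by the choice of $\bar g$. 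Second, I would re-run the argument of Theorem \ref{main-theorem}: $H_{b-a}$ is again a radially symmetric Hamiltonian, so the barcode computation behind that theorem (in particular Lemma \ref{big-lemma}) applies to $\phi_{H_{b-a}}^1$, and the central summand $(b_0-a_0)G$ affects only the part of the barcode carried by the radial zone $\mathrm{supp}\,\bar g$, which is separated from the zone $[R-\eps,R]$ responsible for the long bar built from $\sum_i (b_i-a_i)\bar F_i$. Hence that bar, of length at least $2\pi R\,\|a'-b'\|_{\ell^\infty} - \eps$, survives in the barcode of $\phi_{H_{b-a}}^1$, and the barcode continuity result of \cite{Uzi} gives $\|\phi_{H_{b-a}}^1\|_H \ge 2\pi R\,\|a'-b'\|_{\ell^\infty} - \eps \ge C\,\|a'-b'\|_{\ell^\infty} - \eps$ (note $C < 2\pi R$, since $B(2\pi R)$ is a proper subset of $M$). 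Combining the two estimates and using $\|a-b\|_{\ell^\infty} = \max(|a_0-b_0|, \|a'-b'\|_{\ell^\infty})$ gives $d_H(\overline\Phi(a), \overline\Phi(b)) \ge C\,\|a-b\|_{\ell^\infty} - \eps$ after replacing $\eps$ by an appropriate scalar multiple, exactly as in Theorem \ref{main-theorem}.

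The step I expect to be the main obstacle is the claim, in the second estimate, that the central term $(b_0-a_0)G$ does not shorten the long bar coming from the annular part --- equivalently, that the filtered Floer barcode of a radially symmetric Hamiltonian decouples along radial zones that are separated by flat regions, and does so uniformly as the coefficient $b_0-a_0$ ranges over all of $\R$. This is precisely the sort of statement that the filtered Floer results for radially symmetric Hamiltonians developed in this paper --- culminating in Lemma \ref{big-lemma} --- are built to supply, so the work should reduce to checking that their hypotheses hold for $H_{b-a}$ independently of $b_0-a_0$, which is what the fixed-width flat buffer between $\mathrm{supp}\,\bar g$ and $[R-\eps,R]$ is for. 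Everything else --- the commutation relations, the $C^0$ estimate, and the Calabi/stability input from \cite{EP} and \cite{EPZ} --- should be routine.
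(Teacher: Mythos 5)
Your construction is essentially the paper's: the same central bump of height roughly $2\pi R$ filling most of $B(2\pi R)$ (the paper's $\bar{f}_0$), the same annular functions $\bar{f}_i$, the same oscillation bound for the upper inequality, and the same two-pronged lower bound --- the Calabi quasi-morphism estimate from \cite{EP} in the $\R$-direction and the boundary-depth estimate of Lemma \ref{big-lemma} in the $[0,1]^{\infty}$-directions. The one place you diverge is in how the two estimates are combined, and the step you single out as the main obstacle --- that the barcode bound survives the central term $(b_0-a_0)G$ \emph{uniformly} as $b_0-a_0$ ranges over all of $\R$ --- is both stronger than what the paper establishes and unnecessary. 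Since you take the larger of the two lower bounds, the barcode estimate is only ever needed when $\|a'-b'\|_{\ell^{\infty}} = \|a-b\|_{\ell^{\infty}} \geq |a_0-b_0|$, in which case $|a_0-b_0| \leq 1$ and the central plateau has height at most $2\pi R$; the Hamiltonian is then in the same regime as in Theorem \ref{pen-theorem} and Lemma \ref{big-lemma}, and this is exactly the case split the paper makes (whether or not $|a_0|$ attains the maximum). With that observation your argument coincides with the paper's proof; by contrast, for unbounded $b_0-a_0$ no ``decoupling along radial zones'' statement is supplied by the paper and none should be expected to come for free, since large multiples of $\bar{g}$ have slopes crossing arbitrarily many multiples of $2\pi$ and produce kink actions that can invade any fixed action window used in the bar-tracking arguments. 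So: restrict the barcode estimate to the case $|b_0-a_0|\leq 1$ rather than trying to prove the uniform version.
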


In \cite{EP}, Entov and Polterovich explicitly construct a stable homogeneous Calabi quasi-morphism on $Ham(M, \omega)$ and outline sufficient conditions for which their construction holds.  The authors therein also elaborate on the existence of such quasi-morphisms for a few specific $(M, \omega)$.

\begin{ex}  Let $\eps > 0$, and consider $(S^2, \omega)$, the 2-sphere with the area form $\omega$ such that $\int_{S^2} \omega = 4 \pi$.  We may symplectically embed a displaceable disk of radius $\sqrt{2(1 - \eps)}$ into the Northern hemisphere, and \cite{EP} shows that $Ham(S^2, \omega)$ admits a stable homogeneous Calabi quasi-morphism.  Moreover, $(S^2, \omega)$ is monotone with rationality constant $4 \pi$.  We may therefore apply Theorem \ref{quasimorphism-version} to say that there exists an embedding $\overline{\Phi}: \R \oplus [0,1]^{\infty} \rightarrow Ham(S^2, \omega)$ satisfying
\[
(\tfrac{ 2 \pi (1 - \eps)}{4 \pi} - \eps)||a-b||_{\ell^{\infty}} - \eps \leq d_H(\overline{\Phi}(a), \overline{\Phi}(b)) \leq 4 \pi (1 - \eps) ||a-b||_{\ell^{\infty}}.
\]
\end{ex}

\begin{remark}\label{two_sphere_remark}
While it is again deduced in \cite{EP}, $Ham(S^2, \omega)$ having infinite diameter with respect to Hofer's metric dates back earlier to \cite{Polt98}.  However, it is still unknown whether a multi-dimensional normed vector space may be quasi-isometrically embedded into $Ham(S^2, \omega)$.  In fact, there is nothing as of yet which rules out the possibility of $Ham(S^2, \omega)$ lying inside an infinitely long tube of a fixed radius.  If this is the case, Theorem \ref{quasimorphism-version} and the example above therefore give a lower bound on what this radius can be.
\end{remark}

We may also consider embeddings of $\R \oplus [0,1]^{\infty}$ into $\widetilde{Ham}(M, \omega)$, the universal cover  of $Ham(M, \omega)$.  Elements of this universal cover are homotopy classes $\{ \phi_t \}$ of paths (rel. endpoints) of Hamiltonian diffeomorphisms.  Similar to the case of $Ham(M, \omega)$, we may define the \textit{Hofer pseudo-norm} $\widetilde{|| \cdot ||}_H$ by
\[
\widetilde{||\{ \phi_t \} ||}_H = \text{inf} \left\{ \int_0^1 \left( \underset{M}{\text{max}}(H_t) - \underset{M}{\text{min}}(H_t) \right) \, dt \, | \, H_t \, \text{generates the path} \, \{ \phi_t \} \right\},
\]
after which we may define the \textit{Hofer pseudo-metric} $\tilde{d}_H$ as in the case of $Ham(M, \omega)$.  Again based on results from \cite{EP} concerning stable homogeneous Calabi quasi-morphisms, as well as a result from \cite{Polt} about \textit{stably non-displaceable} Lagrangians, we have the following result.

\begin{theorem}\label{universal-analogue}

Let $(M, \omega)$ and $B(2 \pi R)$ be as in the statement of Theorem \ref{main-theorem}.  Further assume one of the following:

\begin{itemize}

\item  $M$ has a Lagrangian submanifold $L$ which is \textit{stably non-displaceable}, and $B(2 \pi R) \cap L = \emptyset$.

\item  there exists a stable homogeneous Calabi quasi-morphism $\tilde{\mu}: \widetilde{Ham}(M, \omega) \rightarrow \R$, and $B(2 \pi R)$ is displaceable in $M$.

\end{itemize}

Then for any $\eps > 0$, there exits an embedding $\widetilde{\Phi}: \R \oplus [0,1]^{\infty} \rightarrow \widetilde{Ham}(M, \omega)$ so that for any $a, b \in \R \oplus [0,1]^{\infty}$,
\[
C||a - b||_{\ell^{\infty}} - \eps \leq \tilde{d}_{H}(\widetilde{\Phi}(a), \widetilde{\Phi}(b)) \leq 4 \pi R ||a - b||_{\ell^{\infty}},
\]
where $C$ is as in Theorem \ref{quasimorphism-version}.
\end{theorem}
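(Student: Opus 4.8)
The plan is to reduce Theorem \ref{universal-analogue} to the machinery already built for Theorem \ref{main-theorem} by working one level up in the universal cover. Recall that our embedding $\Phi$ in Theorem \ref{main-theorem} is defined by $\Phi(a) = \phi_{\bar{F}_a}^1$ where $\bar{F}_a = \sum_i a_i \bar{F}_i$ is the radially symmetric Hamiltonian supported in the thin annulus near $\partial B(2\pi R)$; the lower bound on $d_H$ comes from the barcode/spectral invariant estimates for filtered Floer homology of these Hamiltonians (via Lemma \ref{big-lemma}). For the universal cover we will define $\widetilde{\Phi}(s, a)$ to be the homotopy class of the concatenation of a path $\{\psi_s^t\}$ realizing Hofer-length growth $\widetilde{\|\psi_s\|}_H \geq K|s|$ with the path generated by $\bar{F}_a$. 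Under the first hypothesis, the stably non-displaceable Lagrangian $L$ disjoint from $B(2\pi R)$ gives — via the results of \cite{Polt} — a one-parameter subgroup (generated by a Hamiltonian supported away from $B(2\pi R)$, e.g. a function that is $s$ times a Morse function with contractible orbits near $L$, or more precisely using partial symplectic quasi-states / Lagrangian spectral invariants) whose lift has Hofer pseudo-norm growing linearly in $|s|$; under the second hypothesis, the quasi-morphism $\tilde\mu$ on $\widetilde{Ham}(M,\omega)$ together with displaceability of $B(2\pi R)$ produces the same via the standard Entov--Polterovich argument (the Calabi-type lower bound $|\tilde\mu(\phi)| \geq C|s|$ for the time-$s$ map of a Hamiltonian supported in a displaceable set, rescaled appropriately), with the constant $C$ exactly the volume ratio appearing in Theorem \ref{quasimorphism-version}.

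The key steps, in order, would be: (1) Fix $\eps>0$ and, using the relevant hypothesis, produce a smooth one-parameter family $s \mapsto \{\psi_s^t\}$ in $\widetilde{Ham}(M,\omega)$ generated by a Hamiltonian $G_s$ supported in $M \setminus B(2\pi R)$ (or displaceing $B(2\pi R)$ off itself), with $\widetilde{\|\{\psi_s^t\}\}}_H \leq (\text{something})\cdot|s|$ trivially and, crucially, with either $|\tilde\mu(\{\psi_s^t\})| \geq (C+\tfrac{\eps}{2})|s|$ (quasi-morphism case) or the Lagrangian spectral lower bound giving the analogous estimate. (2) Define $\widetilde{\Phi}(s,a)$ as the class of the path obtained by first running $\{\psi_s^t\}$ and then the path generated by $\bar{F}_a$ — since the two Hamiltonians have disjoint supports, the group elements commute and the generated path is (homotopic to) the one generated by $G_s + \bar{F}_a$. (3) Upper bound: $\widetilde{\|\widetilde{\Phi}(s,a)(\widetilde{\Phi}(s',a'))^{-1}\|}_H$ is bounded by the Hofer length of the concatenated inverse path; because of disjoint supports the oscillation adds, and choosing $G_s$ to have small oscillation (this is where the freedom in the construction is used, exactly as in Theorem \ref{quasimorphism-version}) gives the $4\pi R\|a-b\|_{\ell^\infty}$ bound up to absorbing the $s$-contribution — here I would normalize $G_s$ so that its Hofer length contribution is $\leq 4\pi R \cdot |s - s'|$, matching the $\ell^\infty$ norm on $\R \oplus [0,1]^\infty$. (4) Lower bound: use that $\tilde\mu$ (resp. the Lagrangian invariant) is a homogeneous quasi-morphism bounded by the Hofer pseudo-norm, vanishes on (or is controlled on) the subgroup generated by the $\bar{F}_i$'s supported in the small displaceable ball — this uses that those Hamiltonians are supported in a displaceable set so $|\tilde\mu| \leq (\text{small})$ on them — and therefore $\tilde d_H(\widetilde{\Phi}(s,a), \widetilde{\Phi}(s',a')) \geq |\tilde\mu(\cdots)| \geq C|s-s'| - (\text{error})$ detects the $\R$-direction, while the barcode estimates from the proof of Theorem \ref{main-theorem} (which are insensitive to the commuting disjointly-supported factor) detect the $[0,1]^\infty$-directions; combining, $\tilde d_H \geq C\|a-b\|_{\ell^\infty} - \eps$.

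The main obstacle is step (4): making the two lower bounds — the quasi-morphism bound in the $\R$-factor and the Floer-theoretic barcode bound in the $[0,1]^\infty$-factor — interact cleanly so that their \emph{maximum} (rather than, say, a weighted average) controls $\tilde d_H$. Concretely, one must show that the barcode lower bound for the $\bar{F}_i$-part survives when composed with $\{\psi_s^t\}$, and conversely that $\tilde\mu$ essentially ignores the $\bar{F}_i$-part; the first follows because the filtered Floer complex of $G_s + \bar{F}_a$ splits (up to the relevant shift) according to the disjoint supports, so that the bars coming from the ball are unchanged, and the second follows from the Calabi quasi-morphism being controlled by $\eps$ on Hamiltonians supported in a set of small volume/displaceable set. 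A secondary subtlety, only in the Lagrangian case, is that one does not have a quasi-morphism on all of $\widetilde{Ham}$ but only the stable non-displaceability of $L$; there one replaces $\tilde\mu$ by the appropriate Lagrangian spectral invariant / partial quasi-state lower bound from \cite{Polt}, and must check its behavior on disjointly-supported factors — again this reduces to the support being disjoint from $L$. Once these compatibility facts are in place, the theorem follows by assembling the four steps exactly as in the proofs of Theorems \ref{main-theorem} and \ref{quasimorphism-version}.
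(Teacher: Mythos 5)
Your overall strategy---detect the $[0,1]^{\infty}$-directions by the boundary-depth/barcode machinery of Theorem \ref{main-theorem} and the $\R$-direction by a Calabi-quasi-morphism or Lagrangian-intersection bound, then split into cases according to which coordinate realizes the $\ell^{\infty}$-norm---is the right shape, and your worry about ``maximum versus average'' is resolved in the paper by exactly that case analysis (as in the proof of Theorem \ref{quasimorphism-version}).  But your implementation differs from the paper at the crucial point and leaves a genuine gap.  The paper does not concatenate a path generated by a Hamiltonian supported away from $B(2\pi R)$ with the $\bar{F}_a$-path: in \emph{both} cases of the theorem it uses one extra radially symmetric function $\bar{F}_0$, supported in the inner region of the \emph{same} ball (on $[0,R-\eps]$ in the radial coordinate, disjoint from the supports of the $\bar{f}_i$, $i\geq 1$), and defines $\widetilde{\Phi}(a)$ as the path generated by the single autonomous Hamiltonian $\sum_{i\geq 0}a_i\bar{F}_i$.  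The $\R$-direction is then controlled by applying the Entov--Polterovich bound on $\widetilde{Ham}(M,\omega)$, respectively Proposition 7.1.A of \cite{Polt}, to the \emph{total} generating Hamiltonian, which is supported in the displaceable ball, respectively in $B(2\pi R)$ with $B(2\pi R)\cap L=\emptyset$; its normalized mean value is dominated by the $a_0$-term up to the small shell volume $V_{4\eps}$, which is what produces the constant $C$ of Theorem \ref{quasimorphism-version}.  The $[0,1]^{\infty}$-directions are controlled by the boundary depth of the time-$1$ map together with the inequality $\|\phi^1\|_H\leq\widetilde{\|\{\phi^t\}\|}_H$, and here the combined function is still a single radially symmetric function to which the proofs of Theorem \ref{pen-theorem} and Lemma \ref{big-lemma} apply, since the $a_0\bar{f}_0$-term only contributes kinks in $[0,R-\eps]$, away from the kinks of $f_k$ used in those arguments.

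The concrete gap in your step (4) is the assertion that ``the filtered Floer complex of $G_s+\bar{F}_a$ splits (up to the relevant shift) according to the disjoint supports, so that the bars coming from the ball are unchanged.''  Disjointness of supports separates the periodic orbits and their actions, but Floer trajectories are not confined to the supports, the differential can connect the two families of generators, and boundary depth is not stable under composition with a disjointly supported factor of large Hofer norm: the only general estimate available, Theorem \ref{theorem-barely}, costs $\|(s-s')G\|$, which is exactly what you cannot afford.  Without a substitute for this claim, your lower bound in the case $\|a-b\|_{\ell^{\infty}}=|a_k-b_k|$ with $k\geq 1$ is unproven whenever $s\neq s'$.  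A secondary problem is your Lagrangian-case generator: ``$s$ times a Morse function with contractible orbits near $L$'' is not the input that \cite{Polt} requires; Proposition 7.1.A is applied in the paper to a Hamiltonian that vanishes identically on $L$ (being supported in $B(2\pi R)$, disjoint from $L$) and has large integral, so that its normalized restriction to $L$ is a large constant---this both furnishes the linear growth and pins the constant down to the volume ratio $C$, whereas your proposed invariants (Lagrangian spectral invariants, partial quasi-states) and their behavior on disjointly supported commuting factors are exactly the unproved compatibility you flag.  The simplest repair is the paper's device: make the $\R$-generator a radially symmetric function inside the same ball, so the combined Hamiltonian is again covered by the barcode arguments of Section \ref{Main_Proof}, and apply the quasi-morphism/Lagrangian bound to the total Hamiltonian rather than factor by factor.
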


See \cite{EP}, \cite{EP2}, or \cite{UsherCalabi} for more information concerning stable homogeneous Calabi quasi-morphis-ms on $\widetilde{Ham}(M, \omega)$, as well as some examples of closed $(M, \omega)$ whose $\widetilde{Ham}(M, \omega)$ admit such a quasi-morphism; for instance, it is shown in \cite{UsherCalabi} that such $(M, \omega)$ include all closed toric manifolds, as well as any point blowup of an arbitrary closed symplectic manifold.  For the definition of ``stably non-displaceable,'' we refer the reader to \cite{EP3}.  For examples of stably non-displaceable Lagrangians, one may refer to \cite{EP3} or \cite{Polt}, where in the latter, a Lagrangian $L$ being stably non-displaceable is referred to as satisfying the \textit{stable Lagrangian intersection property}.

Our paper is organized as follows.  Section \ref*{HFH} recalls the basic construction of filtered Floer homology.  We then use Section \ref*{BC} to discuss persistence modules, barcodes, and their application to filtered Floer homology, including how the \textit{boundary depth} of a Hamiltonian diffeomorphism can be recovered from its barcode.  Section \ref*{RSH} reviews radially symmetric Hamiltonians, discusses how to associate barcodes to radially symmetric $C^0$ functions, and proves certain lemmas concerning these barcodes.  Section \ref*{Main_Proof} proves Theorem \ref{main-theorem}, while Section \ref*{Leftovers} proves Theorems \ref{quasimorphism-version} and \ref{universal-analogue}.





The author was partially funded by the BJ Ball Scholarship, which was awarded by the University of Georgia's Mathematics Department.  He is deeply grateful to his advisor, Michael Usher, for his patience and guidance throughout the progression of this work, as well as for helpful comments on earlier drafts of this paper.  He would also like to thank Sobhan Seyfaddini, for introducing him to some of the useful lemmas employed in \cite{SeyAnnulus}, and Jun Zhang, for useful discussions.  Finally, the author extends his gratitude to the referee for corrections and helpful comments.

\end{section}

\begin{section}{Hamiltonian Floer homology}\label{HFH}
Below, we recall the basic construction of the filtered Hamiltonian Floer homology $HF_*^{\tau}(H)$ associated to a non-degenerate Hamiltonian $H$ on a closed (negative) monotone manifold $M$.  For more details, we refer the reader to \cite{Floer} for the monotone case, \cite{Hof-Sal} for the semi-positive case, and \cite{Pardon} for the case of a general closed symplectic manifold.  For the remainder of this work, ``monotone'' will include the case of negative monotone.

For a smooth $H: \R / \Z \times M \rightarrow \R$, let $\phi_H^t$ be the induced Hamiltonian isotopy as defined in the introduction.  Let $x \in M$ be a fixed point of $\phi^1_H$ such that the 1-periodic orbit of $H$ given by $x(t) = \phi_H^t(x)$ is contractible in $M$.  We call $x(t)$ \textit{non-degenerate} if the time 1 map of the linearization of its flow has all eigenvalues not equal to 1 (i.e. $\textit{det}(\mathbbm{1} - d_{x(1)}(\phi^1_H)) \neq 0$), and we call $H$ non-degenerate if all contractible 1-periodic orbits of $H$ are non-degenerate.  A Hamiltonian $H$ being non-degenerate makes all of its fixed points isolated, so if $M$ is compact, the set $\P(H)$ of $H$'s contractible 1-periodic orbits must be finite.

Each $x(t) \in \P(H)$ can be capped by gluing a disk to $x(t)$ via a map $v: \D^2 \rightarrow M$ satisfying $v(e^{(2 \pi \sqrt{-1}) t}) = x(t)$.  We let either $[x(t), v]$ or $\bar{x}$ denote an equivalence class of capped $x(t)$, where two capped periodic orbits $[x(t), v]$ and $[y(t), w]$ are considered equivalent if $x(t) = y(t)$ and $c_1(TM)|_{\pi_2(M)} ([v \# \overline{w}])$ and  $\int_{S^2} (v \# \overline{w})^*\omega$ are both zero; here, $v \# \overline{w}$ is the sphere created by gluing $w$ to $v$ by an orientation-reversing map on their boundary. 

Given a capped periodic orbit $[x(t),v]$, we may symplectically trivialize $v^*(TM)$ and use this trivialization to express the linearization of $x(t)$'s flow as a path of symplectic matrices.  Assuming $x(t)$ is non-degenerate, the \textit{Conley-Zehnder} index $\mu_{CZ}([x(t),v])$ is an integer measuring the rotation of specific eigenvalues as we move through this path of matrices (see \cite{SZ}).  If $v$ and $w$ are two different cappings for $x(t)$, then $\mu_{CZ}([x(t), v]) - \mu_{CZ}([x(t), w]) = - 2c_1(TM)|_{\pi_2(M)}([v \# \overline{w}])$.  Different conventions are used in different works when defining the Conley-Zehnder index of a capped periodic orbit; our conventions are the same as those used in \cite{SZ} so that if $f$ is a $C^2$-small Morse function on the $2n$-dimensional $M$, a critical point of Morse index $j$ will have Conley-Zehnder index $j-n$ when treated as a trivially capped periodic orbit.

We note here that under our monotonicity condition, two capped periodic orbits $[x(t),v]$ and $[y(t),w]$ are equivalent if and only if $x(t) = y(t)$ and $\mu_{CZ}([x(t),v]) = \mu_{CZ}([y(t),w])$.  Indeed, we would have $c_1(TM)|_{\pi_2(M)}([v \# \overline{w}]) =0$ by the previous paragraph, so 
\[
\int_{S^2} (v \# \overline{w})^*\omega = \lambda c_1(TM)|_{\pi_2(M)}([v \# \overline{w}]) = 0
\]
as well.  Hence, for every periodic orbit $x(t) \in \P(H)$ and $d \in \Z$, there exists at most one equivalence class $[x(t), v]$ so that $\mu_{CZ} ([x(t), v]) = d$.  This and $\P(H)$ being finite implies that $\widetilde{\P}_d(H)$, the set of equivalence classes of capped periodic orbits of $H$ with Conley-Zehnder index $d$, is a finite set.  We may therefore construct a finite dimensional vector space over $\Q$ with generators the elements of $\widetilde{\P}_d(H)$, and we let $CF_d(H)$ denote this vector space.  This represents the $d$-th graded portion of $H$'s \textit{total Floer chain complex}, denoted by $CF_*(H)$.

\begin{remark}  We see that, generally, the total Floer chain complex is infinite dimensional over $\Q$.  One way of getting around this is by considering $CF_*(H)$ as a finite dimensional vector space over a \textit{Novikov ring} (see \cite{Hof-Sal}, for instance).  The previous paragraph shows why we have no need for a Novikov ring in our construction of the Floer chain complex, for we assume monotonicity and restrict our attention to each degree $d$-th portion.
\end{remark}

\begin{remark}\label{DefiningA}  Under our monotonicity assumption, every capping for a fixed periodic orbit $x(t)$ can be obtained by first fixing a capping $v$ and then attaching a multiple of an appropriate element of $\pi_2(M)$ to $v$.  To be precise, let $[A] \in \pi_2(M)$ and a capped periodic orbit $[x(t), v]$ be given.  Where $[x(t), v \# A]$ is the capped periodic orbit created by attaching the sphere $A$ to $v$, we have
\[
\mu_{CZ}([x(t), v \# A]) = \mu_{CZ}([x(t), v]) - 2c_1(TM)|_{\pi_2(M)}([A]).
\]
So choosing $[A]$ with $c_1(TM)|_{\pi_2(M)}([A]) = - N$, every possible capping of $x(t)$ is given by
\[
\{ [x(t), v \#  kA] \}_{k \in \Z},
\]
while the set of possible Conley-Zehnder indices is given by
\[
\{ \mu_{CZ}([x(t), v]) + 2Nk \}_{k \in \Z}.
\]
Here, $v \#  kA$ means $k$ copies of $A$ attached to $v$.  (Note that if $N = 0$, every capped periodic orbit in the first set is equivalent.)

\end{remark}

To describe the boundary operator $\partial_H$ of $CF_*(H)$, we first let $\widetilde{\mathcal{L}_0(M)}$ denote the space of all capped, contractible loops in $M$ endowed with the same equivalence relation used on capped periodic orbits.  For a given Hamiltonian $H$ on $M$, we can define the \textit{action functional} $\A_H$ on $\widetilde{\mathcal{L}_0(M)}$ by
\[
\A_H([\gamma(t), v]) = - \int_{\D^2} v^*(\omega) + \int_0^1 H(t, \gamma(t))dt,
\]
which is well-defined by our equivalence relation on capped periodic orbits.  The critical points of this action functional are precisely the capped periodic orbits of $H$, and when $H$ is non-degenerate, the boundary operator $\partial_H$ for $CF_*(H)$ is defined by a count of isolated (formal) negative gradient flowlines of $\A_H$ on $\widetilde{\mathcal{L}_0(M)}$.  (In the case that $M$ is semipositive, these may be more concretely defined, for generic choices of non-degenerate $H$ and time-dependent $\omega$-compatible almost-complex structure $J_t$, as isolated solutions $u: \R \times \R / \Z \rightarrow M$ to the Hamiltonian Floer equation
\[
\frac{\partial u}{\partial s} + J_t(u)\left( \frac{\partial u}{\partial t} - X_H(t,u)\right) = 0.
\]
If the capped periodic orbit $[y(t),w]$ has a non-zero coefficient in $\partial_H([x(t),v])$, then there exists such a $u$ which limits on $x(t)$ (resp. $y(t)$) as $s$ goes to negative (resp. positive) infinity and such that $[x(t), v] = [x(t), u \# w]$.  The resulting filtered homology, defined below, is independent of our choice of $J_t$).  It is true, though highly nontrivial to prove, that $\partial_H$ defined in this way gives well-defined maps $\partial_{H, \, d} : CF_d(H) \rightarrow CF_{d-1}(H)$ satisfying $\partial_{H, \, d-1} \circ \partial_{H, \, d} = 0$ for all degrees $d$.

After restricting $\A_H$ to $\cup_{d \in \Z}\widetilde{P}_d(H)$, we may extend it to a function $\ell$ on all of $CF_*(H)$ by setting
\[
\ell ( 0 ) = -\infty
\]
and
\[
\ell(c) = \underset{ \{ i \, | \, q_i \neq 0 \}}{\text{max}} (\A_H([x_i(t), v_i])
\]
for $c = \sum q_i [x_i(t), v_i]$ a non-zero chain in $CF_*(H)$.  It is known that $\ell(\partial(c)) < \ell(c)$ for such non-zero chains, so we may create the subcomplex $CF_*^{\tau}(H)$ of $CF_*(H)$ (where $\tau \in \R$) generated by capped periodic orbits with action less than or equal to $\tau$.  Letting $\partial_{H}^{\tau}$ denote the boundary operator of this subcomplex, we set $HF_*^{\tau}(H) = [\text{ker}(\partial_{H}^{\tau})] / [\text{Im} (\partial_{H}^{\tau})]$ to get the \textit{filtered Floer homology} of $H$; we write $HF_*(H)$ for $HF_*^{\infty}(H)$ and call it the \textit{total} Floer homology.

We take a final moment to recall that the \textit{action spectrum} $\textit{Spec}(H)$ of $H$ is simply the set $\cup_{d \in \Z} \A_H( \widetilde{\P}_d(H))$.  Later on, we may refer to a \textit{degree $d$ action of $H$}, by which we mean an element of $\A_H(\widetilde{P}_d(H))$.

\begin{remark}  It will be important to note here the effect of recappings on actions.  Where $[x(t), v]$ and $[A]$ are as from our previous remark, then 
\[
\A([x(t), v \# A]) = \A([x(t),v]) + [\omega]|_{\pi_2(M)}([A])
\]
which is equal to $\A([x(t),v]) + \sigma(\lambda) \gamma$ under our monotonicity condition; furthermore,
\[
\A([x(t), v \# kA]) = \A([x(t),v]) + k[\omega]|_{\pi_2(M)}([A]) = \A([x(t),v]) + k\sigma(\lambda) \gamma.
\]
Here, $\sigma(\lambda)$ is the sign of the monotonicity constant $\lambda$ (with $\sigma(0) = 0)$.  It is this fact that will allow us to enumerate all possible actions and degrees for the capped periodic orbits of certain non-degenerate Hamiltonians on monotone manifolds.

\end{remark}

\end{section}

\begin{section}{Barcodes}\label{BC}
For our discussion of persistence modules and barcodes, we mainly follow the expositions provided in \cite{Uzi} and \cite{PoltShek}.

\begin{subsection}{Persistence modules and barcodes}  Let $K$ be a field.  A \textit{persistence module} $\mathbb{V} = (V, \sigma)$ consists of a $K$-module $V_t$ for each $t \in \R$ and morphisms $\sigma_{st}: V_s \rightarrow V_t$, for each pair $s, t$ with $s \leq t$, such that $\sigma_{ss} = \text{Id}|_{V_s}$ and $\sigma_{tu} \circ \sigma_{st} = \sigma_{su}$.

For an easy example of a persistence module, we may construct an \textit{interval module} $\mathbb{M}(I) = (M(I), \sigma)$ by choosing an interval $I \subset \R$ and defining each $M(I)_t$ by

\begin{displaymath}
   M(I)_t = \left\{
     \begin{array}{lr}
       K, &  t \in I\\
       0, & \text{otherwise};
     \end{array}
   \right.
\end{displaymath}
our maps $\sigma_{st}: M(I)_s \rightarrow M(I)_t$ in this case will be the identity when $s, t \in I$ and the zero map otherwise.

As well as being an easy example of a persistence module, interval modules turn out to be the building blocks of other persistence modules satisfying certain conditions.  One such condition (as the following theorem asserts) is $\mathbb{V}$ being \textit{pointwise finite-dimensional}, where each $V_t$ is a finite-dimensional vector space.  (Another sufficient condition is $\mathbb{V}$ being of \textit{finite type} as in \cite{ZoomCar}.)

\begin{theorem} (\cite{BillCrawley})  Any pointwise finite-dimensional persistence module $\mathbb{V}$ can be uniquely expressed as a direct sum of interval modules $\mathbb{M}(I_{\alpha})$.

\end{theorem}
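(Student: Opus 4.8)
I would split the argument into existence and uniqueness, with existence carrying essentially all the content.

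\emph{Uniqueness.} First observe that each interval module $\mathbb{M}(I)$ is indecomposable with endomorphism ring $\mathrm{End}(\mathbb{M}(I)) = K$, which is a field and in particular a local ring: a morphism $\mathbb{M}(I) \to \mathbb{M}(I)$ is multiplication by a single scalar, since it must commute with the identity structure maps on $I$. Given two decompositions $\mathbb{V} \cong \bigoplus_\alpha \mathbb{M}(I_\alpha) \cong \bigoplus_\beta \mathbb{M}(J_\beta)$, I would invoke the Azumaya--Krull--Remak--Schmidt theorem for (possibly infinite) direct sums of modules whose endomorphism rings are local: it furnishes a bijection between the two index sets carrying $I_\alpha$ to an isomorphic, hence equal, interval. (Alternatively, for the finitely-generated situation one can recover the multiplicity of intervals with prescribed endpoint data directly by an inclusion--exclusion formula in the numbers $\dim V_t$ and the ranks of the maps $\sigma_{st}$; but the Azumaya route settles the general case uniformly.)

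\emph{Existence, the discrete case.} Next I would treat the case where there exist reals $t_1 < \cdots < t_n$ such that $\sigma_{st}$ is an isomorphism whenever $s \le t$ lie in a common connected component of $\R \setminus \{t_1, \ldots, t_n\}$ (including the two unbounded components). Then $\mathbb{V}$ is completely determined by the finite diagram $V_{t_1} \to V_{t_2} \to \cdots \to V_{t_n}$, i.e.\ a representation of a linear $A_n$-type quiver with all arrows pointing the same way. Such a representation decomposes into ``interval representations'' (the vector space $K$ on a contiguous block of vertices with identity maps, and $0$ elsewhere) by an elementary induction: choose bases of each $V_{t_i}$ adapted to the kernels and images of the adjacent maps and peel off summands one block at a time. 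Translating back yields $\mathbb{V} \cong \bigoplus_\alpha \mathbb{M}(I_\alpha)$, where whether each endpoint of $I_\alpha$ is included is dictated by whether the infimum, resp.\ supremum, of the parameters supporting that block is attained.

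\emph{Existence, the general case.} This is the crux. Using pointwise finite-dimensionality, I would run Crawley-Boevey's functorial-filtration argument directly over the poset $\R$: for $v \in V_t$ define its ``birth'' and ``death'' as the infimum, resp.\ supremum, of the parameters to which $v$ extends as a compatible family of sections to the left, resp.\ to the right, and use pfd to guarantee that the relevant chains of subspaces of each $V_t$ stabilize. This produces, canonically at every parameter $t$, a decomposition $V_t = \bigoplus_{I \ni t} V_t^I$ indexed by intervals $I$, with the property that $\sigma_{st}$ carries $V_s^I$ isomorphically onto $V_t^I$ when $s, t \in I$ and to $0$ otherwise; assembling the $V_t^I$ over all $t$ exhibits $\mathbb{V}$ as $\bigoplus_I \mathbb{M}(I)^{\oplus m_I}$. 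An alternative I would consider is to exhaust $\R$ by finite subsets $S_1 \subset S_2 \subset \cdots$, apply the discrete case to each sampled submodule, and argue---again via pfd together with the rigidity of the finite decompositions (Azumaya)---that these refine coherently, so that the resulting interval data pass to the limit.

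\emph{Main obstacle.} Uniqueness and the discrete case are routine (linear algebra plus a citation). The genuine difficulty is the general existence statement: producing a decomposition valid \emph{simultaneously} at all real parameters, rather than merely a compatible system of local ones. The role of pointwise finite-dimensionality is precisely to rule out the pathologies that otherwise occur (infinitely descending families of sections with no well-defined birth, non-terminating functorial filtrations), and making that termination argument rigorous is where the real work lies.
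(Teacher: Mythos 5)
This statement is not proved in the paper at all: it is quoted verbatim from Crawley-Boevey's article \cite{BillCrawley}, so there is no internal argument to compare against, only the cited literature. Measured against that, your outline is pointed in the right direction --- uniqueness via the fact that each $\mathbb{M}(I)$ has endomorphism ring $K$ (hence local) together with Azumaya's theorem is exactly the standard argument, and the finite/discrete case is indeed elementary $A_n$-quiver linear algebra --- but the part you yourself flag as the crux is where the proposal stops being a proof. Saying ``define the birth and death of $v\in V_t$ and use pfd so that the relevant chains stabilize, producing a canonical decomposition $V_t=\bigoplus_{I\ni t}V_t^I$'' skips the entire content of the theorem: a general element of $V_t$ is a sum of vectors with different birth/death profiles, and the subspaces of elements with a given profile do not form a direct-sum decomposition by fiat. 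Crawley-Boevey's actual work is a functorial-filtration argument whose engine is a linear-algebra lemma about splitting a vector space compatibly with \emph{two} chains of subspaces (recording extendability to the left and to the right), together with a careful bookkeeping of the four endpoint types (whether each endpoint is attained), which requires working with one-sided limit spaces rather than just the $V_t$ themselves. Pointwise finite-dimensionality is used there pointwise, at each $t$, to make those chain conditions hold; it is not a matter of one global chain terminating.

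Your fallback suggestion --- sample $\R$ by finite subsets, decompose each restriction, and pass to the limit using Azumaya-type rigidity --- is also not something that can be waved through: interval decompositions of restrictions need not refine coherently under adding sample points (matching summands across refinements is exactly the hard part, and for merely q-tame modules this procedure famously yields only a persistence diagram, not an actual decomposition). So as written the proposal establishes uniqueness and the finite case, correctly identifies the general existence statement as the real difficulty, but does not supply the argument that proves it; to complete it you would essentially have to reproduce the functorial-filtration machinery of \cite{BillCrawley} (or an equivalent, e.g.\ a matching-theoretic or structure-theoretic substitute), not merely invoke its name.
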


Thus, for a pointwise finite dimensional persistence module $\mathbb{V}$, we can define its \textit{barcode} as the collection $\B = \{(I_{\alpha}, m_{\alpha})\}$, where each $I_{\alpha}$ is an interval appearing in $\mathbb{V}$'s interval module decomposition with multiplicity $m_{\alpha} > 0$.  We may sometimes refer to an $I_{\alpha}$ with $(I_{\alpha}, m_{\alpha}) \in \B$ as a \textit{bar} or \textit{interval of $\B$}, while by a \textit{left} or \textit{right-hand endpoint of $\B$} we mean the left or right-hand endpoint of a bar of $\B$.

\begin{remark}  Let $H$ be non-degenerate on closed monotone $M$ and fix a degree $d$.  Referring to Section \ref*{HFH}, one sees that $CF_*^s(H)$ is a subcomplex of $CF_*^t(H)$ whenever $s \leq t$, and it is easily verified from here that we get a pointwise finite-dimensional persistence module by setting $V_t = HF_{d}^t(H)$ and $\sigma_{st}: HF_{d}^s(H) \rightarrow HF_{d}^t(H)$ equal to the map induced by inclusion on the chain level.  It therefore has an associated barcode $\B^d(H)$, which we call the \textit{degree $d$ barcode of $H$}.  Theorem 6.2 of \cite{Uzi} asserts that any $I_{\alpha}$ for $(I_{\alpha}, m_{\alpha}) \in \B^d(H)$ will have a degree $d$ action as its left-hand endpoint and a degree $d+1$ action (or infinity) as its right-hand endpoint; we say that two actions of degrees $d$ and $d+1$ \textit{pair} with each other if they are endpoints of the same interval in $\B^d(H)$.  Combining Proposition 5.5, Theorem 6.2, and the beginning of the proof of Theorem 12.3 of \cite{Uzi} gives that every degree $d$ action $c$ of $H$ will appear as an endpoint of $\B^d(H) \cup \B^{d-1}(H)$ with multiplicity equal to the number of elements $[x(t), v] \in \widetilde{P}_d(H)$ such that $\A_H([x(t), v]) = c$.
\end{remark}

\begin{remark}
Since the finite-valued degree $d$ actions of $H$ comprise the left-hand endpoints of $\B^d(H)$, and since our interest in persistence modules and barcodes lies only in their application to this context of Hamiltonian Floer thoery, all barcodes $\B$ will be assumed from now on to have finite-valued left-hand endpoints.
\end{remark}

Given a barcode $\B = \{ (I_{\alpha}, m_{\alpha}) \}$, create a set of indexed intervals $\langle \B \rangle = \{ I_{\alpha}^{i_{\alpha}} \}_{1 \leq i_{\alpha} \leq m_{\alpha}}$ which treats an interval $I_{\alpha}$ with multiplicity $m_{\alpha}$ as $m_{\alpha}$ separate copies of $I_{\alpha}$.  For $\eps >0$ and a barcode $\B$, let $\langle \B \rangle_{\eps}$ denote the subset of $\langle \B \rangle$ consisting of all intervals of length less than or equal to $2\eps$.  A function $\mu$ from a subset of $\langle \B \rangle$ to a subset of $\langle \mathcal{C} \rangle$ is called an \textit{$\eps$-matching} between barcodes $\B$ and $\mathcal{C}$ if:

\begin{itemize}
\item $\langle \B \rangle \backslash \langle \B \rangle_{\eps}$ is contained in the domain of $\mu$.

\item $\langle \mathcal{C} \rangle \backslash \langle \mathcal{C} \rangle_{\eps}$ is contained in the image of $\mu$.

\item If $\mu ([a,b)) = [a',b')$, where $[a,b) \in \langle \B \rangle \backslash \langle \B \rangle_{\eps}$ or $[a',b') \in  \langle \mathcal{C} \rangle \backslash \langle \mathcal{C} \rangle_{\eps}$, then $|a - a'| < \eps$ and $b$ and $b'$ are either both infinity or both finite with $|b-b'| < \eps$.

\end{itemize}

Finally, the \textit{bottleneck distance} $d_b$ between barcodes $\B$ and $\mathcal{C}$ is defined as
\[
d_b(\B, \mathcal{C}) = \text{inf}\{\eps > 0 \, | \, \text{there exists an $\eps$-matching between $\B$ and $\mathcal{C}$} \}.
\]

In our context of Hamiltonian Floer theory, we have the following result, which is a much weaker version of Theorem 12.2 from \cite{Uzi}.

\begin{theorem}\label{BarCont}  Let $H_0$ and $H_1$ be two non-degenerate Hamiltonians on closed and symplectic $M$.  Then for any degree $d$,
\[
d_b(\B^d(H_0), \B^d(H_1)) \leq \smallint_0^1||H_0(t, \cdot) - H_1(t, \cdot)||_{L^{\infty}} dt.
\]
\end{theorem}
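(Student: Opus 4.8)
The plan is to deduce the estimate from the algebraic stability theorem for persistence modules, reducing the problem to producing an \emph{interleaving} of the two Floer-theoretic persistence modules with interleaving parameter controlled by $\delta := \int_0^1\|H_0(t,\cdot)-H_1(t,\cdot)\|_{L^\infty}\,dt$. Recall that persistence modules $\mathbb{V}=(V,\sigma)$ and $\mathbb{W}=(W,\sigma')$ are $\delta$-interleaved if there are families of maps $f_t\colon V_t\to W_{t+\delta}$ and $g_t\colon W_t\to V_{t+\delta}$ commuting with all structure maps and satisfying $g_{t+\delta}\circ f_t=\sigma_{t,t+2\delta}$ and $f_{t+\delta}\circ g_t=\sigma'_{t,t+2\delta}$. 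The easy half of the algebraic stability theorem (Chazal--de Silva--Glisse--Oudot; Bauer--Lesnick) says that a $\delta$-interleaving between pointwise finite-dimensional persistence modules induces a $\delta$-matching between their barcodes, so this would give $d_b(\B^d(H_0),\B^d(H_1))\le\delta'$ for every $\delta'\ge\delta$, which is the claim.

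First I would set up the two persistence modules $\mathbb{V}=(HF_d^t(H_0),\sigma)$ and $\mathbb{W}=(HF_d^t(H_1),\sigma')$ and build the interleaving maps from continuation maps. Using the linear homotopy $H_s=(1-s)H_0+sH_1$ (perturbed within its class so that the relevant parametrized moduli spaces are regular, via the machinery of \cite{Floer}, \cite{Hof-Sal}, or \cite{Pardon}), the standard energy estimate for solutions of the $s$-dependent Floer equation shows that the induced chain map raises the action filtration by at most $c_+:=\int_0^1\max_M\bigl(H_1(t,\cdot)-H_0(t,\cdot)\bigr)\,dt$, hence descends to $\Phi_t\colon HF_d^t(H_0)\to HF_d^{t+c_+}(H_1)$; running the homotopy backwards gives $\Psi_t\colon HF_d^t(H_1)\to HF_d^{t+c_-}(H_0)$ with $c_-:=\int_0^1\max_M\bigl(H_0(t,\cdot)-H_1(t,\cdot)\bigr)\,dt$. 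Since $c_\pm\le\delta$, I would post-compose $\Phi_t,\Psi_t$ with the filtration-inclusion maps $\sigma'_{t+c_+,\,t+\delta}$ and $\sigma_{t+c_-,\,t+\delta}$ to obtain $f_t\colon HF_d^t(H_0)\to HF_d^{t+\delta}(H_1)$ and $g_t\colon HF_d^t(H_1)\to HF_d^{t+\delta}(H_0)$. Commutativity of $f_t,g_t$ with $\sigma,\sigma'$ is immediate from functoriality of continuation maps under filtration inclusions.

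It then remains to verify the interleaving identities $g_{t+\delta}\circ f_t=\sigma_{t,t+2\delta}$ and $f_{t+\delta}\circ g_t=\sigma'_{t,t+2\delta}$. This is the crux: concatenating the homotopy from $H_0$ to $H_1$ with the one from $H_1$ back to $H_0$ and invoking the standard homotopy-of-homotopies argument identifies the composite continuation map with the continuation map of a (suitably chosen) constant homotopy from $H_0$ to $H_0$, which on filtered homology is exactly the inclusion-induced map; tracking the action window of the concatenated homotopy is precisely what produces the shift $2\delta$ rather than something larger, and the analogous statement holds for $f_{t+\delta}\circ g_t$. This exhibits $\mathbb{V}$ and $\mathbb{W}$ as $\delta$-interleaved, and algebraic stability then completes the proof.

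The hard part is entirely the Floer-theoretic input of the last two paragraphs — the sharp action estimate on continuation maps and the identification of the composite of opposite continuation maps with the filtration map. Both are classical in the monotone case relevant to this paper (\cite{Floer}, \cite{Hof-Sal}) and hold for arbitrary closed symplectic $M$ via the virtual techniques of \cite{Pardon}; everything else is formal persistence-module bookkeeping. (Alternatively one may simply quote this as the special case $d_b\le d_{\mathrm{interleaving}}$ of Theorem 12.2 of \cite{Uzi}, the argument above being the content of that reduction.)
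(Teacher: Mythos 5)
Your proposal is correct, and in the end it lands where the paper does: the paper offers no proof of this statement at all, but simply quotes it as a weakened form of Theorem 12.2 of \cite{Uzi}, which is exactly the fallback you allow in your final parenthetical. What you add is the standard argument \emph{behind} that citation: continuation maps for the linear homotopy with the sharp action-shift estimate $c_{\pm}\le\delta$, the homotopy-of-homotopies identification of the composite with the inclusion-induced map (where the filtration control on the chain homotopy is the point that makes the interleaving relations hold with shift $2\delta$), and then the easy half of algebraic stability. One caveat is worth recording: the reduction to the algebraic stability theorem for pointwise finite-dimensional persistence modules over $\Q$ is legitimate in the setting where the paper actually defines $\B^d(H)$, namely closed (negative) monotone $M$, where each $HF_d^t(H)$ is finite-dimensional. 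For an arbitrary closed symplectic $M$ (as the statement is loosely phrased), recapping can preserve the degree while changing the action, so $CF_d^t(H)$ over $\Q$ need not be finite-dimensional; there one must work over a Novikov field and use the Usher--Zhang machinery (their concise barcodes and Theorem 12.2) rather than the p.f.d.\ decomposition theorem, so in full generality the citation is not merely a convenience but is doing real work.
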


Though barcodes so far have only been defined for non-degenerate Hamiltonians, the above theorem may occasionally be applied to define barcodes for degenerate or even (as we will do later) merely continuous functions on $M$.

\end{subsection}

\begin{subsection}{Boundary depth}
The boundary depth of a Hamiltonian diffeomorphism is our main motivation for studying barcodes, so we pause very briefly to remind the reader of its definition, its relation to barcodes, and a few of its key properties.  See \cite{UsherFirst}, \cite{MikeSolo}, and \cite{Uzi} for more details.

Let $\phi$ be a Hamiltonian diffeomorphism generated by non-degenerate $H$.  After constructing its Floer chain complex $CF_*(H)$, we may define the quantities $\beta_d(\phi) \in \R$ as 
\[
\beta_d(\phi) = \underset{0 \neq x \in \partial(CF_d(H))}{\text{sup}} \text{inf}\{ \ell(y) - \ell(x) \, | \, \partial_H(y) = x \},
\]
which is independent of the choice of such an $H$.  The \textit{boundary depth} $\beta(\phi)$ of $\phi$ may be defined as
\[
\beta(\phi) = \underset{d \in \Z}{\text{sup}} \, \,  \beta_d(\phi)
\]
and is a finite quantity.  The relationship between $\beta(\phi)$ and the barcodes $\B^d(H)$ becomes clear when one deduces from Theorems 4.11 and 6.2 of \cite{Uzi} that $\beta_d(\phi)$ is simply the length of the longest finite-length bar in $\B^d(H)$.  The quantity $\beta(\phi)$ is of particular interest to us because it gives a lower bound on $\phi$'s Hofer norm; we refer the reader to \cite{EggBeaters} and \cite{MikeSolo} for previous instances in which the boundary depth is used to answer questions of Hofer's geometry.

Similar to our continuity result for barcodes, we have the following continuity result for boundary depth which will prove useful in a later argument.

\begin{theorem}\label{theorem-barely} (\cite{UsherFirst}, \cite{MikeSolo})  For a Hamiltonian $H$, set
\[
||H|| = \int_0^1 \left( \underset{M}{\text{max}}(H_t) - \underset{M}{\text{min}}(H_t) \right) \, dt.
\]
If $\phi, \psi \in Ham(M, \omega)$ are generated by non-degenerate Hamiltonians $H$ and $K$, respectively, then
\[
|\beta(\phi) - \beta(\psi)| \leq ||H - K||.
\]
\end{theorem}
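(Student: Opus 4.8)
The plan is to deduce this from the continuity of barcodes, Theorem~\ref{BarCont}, together with the two facts recalled in Section~\ref{BC}: that $\beta_d(\phi)$ does not depend on the choice of Hamiltonian generating $\phi$, and that $\beta_d(\phi)$ equals the length of the longest finite-length bar of $\B^d(H)$. The one thing that needs care is that Theorem~\ref{BarCont} is phrased with $\int_0^1 ||H_t - K_t||_{L^\infty}\,dt$, whereas $||H-K||$ is built from $\max_M - \min_M$; the former is in general the larger quantity, so a naive application would cost a factor. To correct this I would normalise by a function of time alone. Writing $G_t = H_t - K_t$, set $c(t) = \tfrac12\bigl(\max_M G_t + \min_M G_t\bigr)$, so that $\max_M|G_t - c(t)| = \tfrac12\bigl(\max_M G_t - \min_M G_t\bigr)$ for every $t$. (The map $t\mapsto \max_M G_t$ is only Lipschitz, so strictly speaking one first replaces $c$ by a $\delta$-close smooth $1$-periodic function and lets $\delta\to0$ at the end; I suppress this.) Since $c(t)$ is constant on $M$ we have $X_{K+c} = X_K$, so $K' := K + c$ is again non-degenerate and generates $\psi$, whence $\beta_d(\psi)$ may be computed from $K'$, while
\[
\int_0^1 ||H_t - K'_t||_{L^\infty}\,dt = \int_0^1 \max_M|G_t - c(t)|\,dt = \tfrac12\,||H - K||.
\]

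Next I would apply Theorem~\ref{BarCont} to the non-degenerate pair $H, K'$, obtaining $d_b(\B^d(H),\B^d(K')) \le \tfrac12||H-K||$ for every degree $d$. Then I would observe that the bottleneck distance controls the change in the longest finite bar: given an $\eps$-matching $\mu$ with $\eps > d_b(\B^d(H),\B^d(K'))$, the longest finite bar of $\B^d(H)$, say of length $L = \beta_d(\phi)$, is either unmatched, in which case $L \le 2\eps$, or matched to a finite bar of $\B^d(K')$ whose two endpoints lie within $\eps$ of its own, hence of length greater than $L - 2\eps$; in either case $\beta_d(\psi) \ge L - 2\eps$, and by symmetry $|\beta_d(\phi) - \beta_d(\psi)| \le 2\eps$. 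Taking the infimum over such $\eps$ gives
\[
|\beta_d(\phi) - \beta_d(\psi)| \le 2\,d_b\bigl(\B^d(H),\B^d(K')\bigr) \le ||H-K||.
\]
Finally, taking the supremum over $d$ and using $|\sup_d a_d - \sup_d b_d| \le \sup_d|a_d - b_d|$ (both suprema being finite since $\beta(\phi),\beta(\psi)$ are), we conclude $|\beta(\phi) - \beta(\psi)| \le ||H-K||$.

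The only genuine obstacle is the norm mismatch handled in the first paragraph: once one notices that adding a function of time to a Hamiltonian is invisible to the boundary depth, the normalisation recovers exactly the factor $\tfrac12$ needed to absorb the unavoidable $2\eps$ loss incurred in passing from the bottleneck distance to bar lengths. Everything else—the smoothing of $c$, the elementary properties of $\eps$-matchings, and the passage to the supremum over degrees—is routine bookkeeping.
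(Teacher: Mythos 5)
Your derivation is correct, but note that the paper does not prove this statement at all: it is imported verbatim from \cite{UsherFirst} and \cite{MikeSolo}, where Usher establishes it by chain-level arguments (continuation maps between the Floer complexes of $H$ and $K$ together with the standard action-shift estimates, showing directly that each $\beta_d$ is Lipschitz with respect to $||\cdot||$). What you give instead is a self-contained deduction from the tools the paper does state, namely Theorem \ref{BarCont} plus the identification of $\beta_d(\phi)$ with the longest finite bar of $\B^d(H)$ and its independence of the generating Hamiltonian. The two genuinely nontrivial points in your route are both handled correctly: passing from bottleneck distance to longest-bar length costs a factor of $2$ (a long finite bar must be matched to a finite bar with both endpoints within $\eps$, and bars shorter than $2\eps$ are harmless), and you recover exactly that factor by replacing $K$ with $K+c(t)$, $c(t)=\tfrac12(\max_M G_t+\min_M G_t)$, which leaves the flow, the non-degeneracy, and $\beta_d(\psi)$ unchanged while cutting $\int_0^1||H_t-K'_t||_{L^\infty}dt$ down to $\tfrac12||H-K||$; the smoothing of $c$ and the supremum over degrees are, as you say, routine. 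The trade-off is that your argument is not independent of Usher's work --- Theorem \ref{BarCont} (Usher--Zhang) is itself proven by essentially the same continuation-map energy estimates --- but as a derivation within this paper's stated toolkit it is a clean and sharp alternative to citing \cite{UsherFirst}, \cite{MikeSolo}, and the midpoint-normalization observation is exactly the right way to avoid losing a factor of $2$ in the constant.
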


It is this continuity result that allows us to define the boundary depth of a degenerate $\phi \in Ham(M, \omega)$ (namely, if $\phi$ is generated by $H$, choose a sequence of non-degenerate $H_k$'s which $C^0$-converges to $H$ and let $\beta(\phi)$ be the limit of the $\beta(\phi_{H_k})$).

\end{subsection}

\end{section}

\begin{section}{Radially symmetric Hamiltonians}\label{RSH}

Suppose that we may symplectically embed a ball $B(2\pi R) = \big\{ (\vec{x}, \vec{y}) \, | \, \sum_i (x_i^2 + y_i^2) \leq 2R \big\}$ of radius $\sqrt{2R}$ into $M$.  Let $f(r)$ be a smooth function on $[0,R]$ which has vanishing derivatives of all orders (except for possibly the $0$-th) at $r = R$ and which has $f'(0)$ not an integer multiple of $2 \pi$.  Letting $z = (\vec{x}, \vec{y})$ be coordinates on our symplectic ball $B(2\pi R)$ with symplectic form $\sum_i dx_i \wedge dy_i$, we may define a smooth function $F: M \rightarrow \R$ by

\begin{displaymath}
   F(p) = \left\{
     \begin{array}{lr}
       f\left( \frac{|z|^2}{2} \right), & p = z \in B(2\pi R) \\
         & \\
       f(R), & \text{otherwise} ,
     \end{array}
   \right.
\end{displaymath}

and an easy calculation shows that the flow $\phi_F^t$ corresponding to this function is given by

\begin{displaymath}
   \phi_F^t(p) = \left\{
     \begin{array}{lr}
        e^{\sqrt{-1}f' ( \frac{|z|^2}{2} ) t}z, & p = z \in B(2\pi R) \\
         & \\
       p, & \text{otherwise.}
     \end{array}
   \right.
\end{displaymath}

Assuming for now that there are only a finite number of $r_i$ for which $f'(r_i)$ is an integer multiple of $2 \pi$, the above formula for our flow tells us that we will have an $S^{2n-1}$'s worth of periodic orbits at every radius equal to $\sqrt{2r_i}$, and any orbit (with capping contained in $B(2\pi R)$) in such an $S^{2n-1}$ family will have action
\[
f(r_i) - f'(r_i)r_i.
\]

We see that all points outside of $B(2\pi R)$ are constant periodic orbits of action precisely $f(R)$.  We will also have a constant periodic orbit occurring at the center of our symplectic ball whose action (with trivial capping) will be precisely $f(0)$. Our condition on $f'(0)$ implies that this capped orbit will be non-degenerate and, assuming $2 \pi l < f'(0) < 2 \pi (l+1)$, will have Conley-Zehnder index $-2ln - n$ (we leave it to the reader to arrive at this formula by following the reasoning provided in \cite{Oancea}, while keeping in mind that they compute the negative version of our $\mu_{CZ}$).

As can be seen by the presence of non-isolated periodic points, our $F$ is degenerate, so we perturb it to a non-degenerate $\tilde{F}$ for which we may construct a barcode.  A very specific perturbation is chosen as follows.

We start with the $S^{2n-1}$ families of periodic orbits.  Along with our assumption that all points $r_i$ where $f'(r_i)$ is an integer multiple of $2 \pi$ are isolated, we further assume that $f''(r_i) \neq 0$ for each such $r_i$ so that we may perform the standard perturbation of $F$ around the $S^{2n-1}$ families of periodic orbits (see \cite{CFHW}, \cite{Oancea}, \cite{Sey}). In particular, define a perfect Morse function $h_{i}$ on the $S^{2n-1}_i$ corresponding to $r_i$, and smoothly extend it to a small tubular neighborhood in $B(2\pi R)$.  Calling these extended functions $h_{r_i}$, the time-dependent function $F + \delta \sum_i h_{r_i} \circ (\phi_F^t)^{-1}$ with $\delta$ small enough will have each $S^{2n-1}_i$ splitting into two periodic orbits $z_1, z_2$.  If $f'(r_i) = 2 \pi l$ and $[z_j, v_j]$ denotes these orbits with cappings contained in $B(2\pi R)$, their indices will be given by

\begin{displaymath}
   \mu_{cz}([z_1, v_1]) = \left\{
     \begin{array}{lr}
       -2ln+n, & f''(r_i)<0 \\
       -2ln+n-1, & f''(r_i)>0
     \end{array}
   \right.
\end{displaymath}

\begin{displaymath}
   \mu_{cz}([z_2, v_2]) = \left\{
     \begin{array}{lr}
       -2ln-n+1, & f''(r_i)<0 \\
       -2ln-n, & f''(r_i)>0
     \end{array}
   \right.
\end{displaymath}
Their actions will be approximately $f(r_i) - f'(r_i)r_i$, with the error term going to zero with $\delta$.

We now deal with the periodic orbits outside of $B(2\pi R)$.  Choose once and for all a Morse function $g: M \rightarrow [-1,0]$ that has a unique critical point (a maximum, where $g$ attains the value $0$) in $B(2\pi R)$, and choose a sufficiently small collar neighborhood $C$ of $\partial ( B(2\pi R) )$ so that $C$ contains no periodic points of $F$ which occur in $\text{int}(B(2\pi R))$; the existence of such a $C$ is guaranteed by our finiteness assumption on the number of $r_i$.  Then define $\tilde{g}$ to be equal to $g$ on $M \backslash (B(2\pi R) \cup C)$, $0$ on $B(2\pi R) \backslash C$, and to be smoothly extended to all of $M$ so that it has no critical points in $C$.  If $f$ is decreasing right before $R$, then the final step in our perturbation of $F$ will be to $ F + \delta \sum_i h_{r_i} \circ (\phi_F^t)^{-1} + \eps\tilde{g}$, with $\eps$ small enough so that the only periodic points in $M \backslash \text{int}(B(2\pi R))$ of this new function are critical points of $g$.  (If $f$ is increasing right before $R$, our final perturbation is instead to $F + \delta \sum_i h_{r_i} \circ (\phi_F^t)^{-1} - \eps\tilde{g}$.)  We refer to these periodic orbits as \textit{exterior orbits}.  By our choice of $g$, the exterior orbits (with trivial cappings) of $F + \delta \sum_i h_{r_i} \circ (\phi_F^t)^{-1} + \eps\tilde{g}$ will have Morse indices lying in  $[0, 2n-1]$ so that their Conley-Zehnder incices will lie in $[-n, n-1]$ by our convention (assuming again that $\eps$ is small enough).  Their actions will lie in $[f(R) -\eps, f(R))$.  (Such trivially capped orbits will have Conley-Zehnder indices in $[-n+1, n]$ and actions in $(f(R), f(R) + \eps]$ if our final perturbation was instead to $F + \delta \sum_i h_{r_i} \circ (\phi_F^t)^{-1} - \eps\tilde{g}$.)

We let $\tilde{F} = F + \delta \sum_i h_{r_i} \circ (\phi_F^t)^{-1} + \eps \tilde{g}$ denote this non-degenerate Hamiltonian.  To get all possible actions and indices of $\tilde{F}$'s capped periodic orbits, we must only consider the actions and indices already described and how they change under recappings.  Our monotonicity condition implies that any such change can only occur when $N \neq 0$, in which case increasing the index by $k 2N$ (with $k \in \Z$) via recapping will increase its action by $k \sigma(\lambda) \gamma$ (where $\sigma(\lambda)$ is as in Section \ref*{HFH}).  As noted earlier, these indexed actions give us all finite-valued endpoints of all bars in $\tilde{F}$'s barcode.

Moving our focus away from smooth functions, suppose that $f: [0, R] \rightarrow \R$ is piecewise linear.  We say that $f$ satisfies the \textit{slope condition} if all of its slopes are not integer multiples of $2 \pi$ and if the slope $s$ going into the line $r = R$ satisfies $|s| < 2 \pi$.  Assuming $f$ satisfies the slope condition, and letting $F: M \rightarrow \R$ be the $C^0$ function induced by $f$, our goal now is to show how we may associate to this non-differentiable $F$ a barcode in any degree.

We first describe a specific kind of perturbation of $F$, which we will refer to as \textit{standard} (or more commonly as a \textit{standard perturbation of $f$}); this perturbation is the same as that described in \cite{Sey}.  Pick small enough $\eps'$-neighborhoods around the $r$-values where $f$ is not differentiable so that no two neighborhoods intersect, and pick a smoothing $f_{\eps'}$ which has strictly monotonic first derivative on these $\eps'$ neighborhoods and which is equal to $f$ elsewhere.  (We choose our smoothing at $r = R$ so that our function $f_{\eps'}$ has vanishing derivatives of all orders, except possibly the $0$-th, at $r = R$.)  Where $F_{\eps'}$ is the function on $M$ induced by $f_{\eps'}$, choose $\eps$ and $\delta$ small enough to construct the non-degenerate, time-dependent perturbation $\tilde{F}_{\eps'}$ of $F_{\eps'}$ as described above:

\[
\tilde{F}_{\eps'} = F_{\eps'} + \delta \sum_i h_{r_i} \circ (\phi_F^t)^{-1} + \eps\tilde{g}.
\]

If $\eps'_k$ is a sequence converging to zero, we may choose similar smoothings $f_{\eps'_k}$ and appropriate sequences $\delta_k$, $\eps_k$ (both converging to zero) to create a sequence of standard perturbations $\tilde{F}_{\eps'_k}$ (abbreviated as $\tilde{F}_k$) of $F$ which $C^0$-converges to $F$ when $F$ is regarded as a function with domain $\R / \Z \times M$.  Our assumption that $|s|< 2 \pi$  ensures the existence of a collar neighborhood $C$ such that, for any $\eps'_k$ small enough, $C \cap \text{int}(B(2\pi R))$ contains no periodic orbits of $F_{\eps'_k}$.  We may therefore use the same function $\tilde{g}$ for every entry in our sequence $\tilde{F}_k$, a fact which will aid us momentarily.

Letting $d$ be any degree, each $\tilde{F}_k$ has the same number of actions in degree $d$ by the monotonic behavior of each $f_{\eps_k'}$'s derivative on the $\eps_k'$ neighborhoods.  Furthermore, the set of degree $d$ actions for $\tilde{F}_k$ forms a sequence converging to a specific set of real numbers.  (To see why these statements are true, let $\bar{r}$ be a point of non-differentiability for $f$ with $s_1$ and $s_2$ being the slopes of $f$ immediately before and after $\bar{r}$, and suppose $2 \pi l$ for some $l \in \Z$ is between $s_1$ and $s_2$.  By our choice of smoothings, every $f_{\eps_k'}$ has a unique $r$ value $r_{i,k}$ in $(\bar{r} - \eps'_k, \bar{r} + \eps'_k)$ for which $f_{\eps_k'}$'s derivative is $2 \pi l$.  Letting $[x, v]_k$ be the corresponding capped orbit of $\tilde{F}_k$ of lower (or higher) index $d$ (with capping contained in $B(2\pi R)$), we form the sequence of degree $d$ actions $\A([x,v]_k)$, which converges to $-2 \pi l \bar{r} + f(\bar{r})$.  Such convergence statements clearly apply to recappings of the $[x,v]_k$, as well as actions coming from the $y$-intercept and from exterior orbits, since we are using the same function $\tilde{g}$ for every entry $\tilde{F}_k$ in our limiting sequence.)  This fact is essential in proving the following:

\begin{claim}\label{degBarCode}  Abbreviate $\B^d(\tilde{F}_k)$ as $\B^d_k$.  The sequence $\B^d_k$ converges in the bottleneck distance to a unique barcode $\B^d$.
\end{claim}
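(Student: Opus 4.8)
The plan is to show the sequence $\B^d_k$ is Cauchy in the bottleneck distance and then, since bottleneck distance is only a pseudo-metric on barcodes in general, argue that a genuine limit barcode exists and is unique by pinning down its bars explicitly from the convergence data already established in the paragraph preceding the claim.

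First I would use Theorem \ref{BarCont} to convert $C^0$-closeness of the Hamiltonians into bottleneck-closeness of the barcodes: for any $k, m$ we have
\[
d_b(\B^d_k, \B^d_m) \leq \int_0^1 \|\tilde F_k(t,\cdot) - \tilde F_m(t,\cdot)\|_{L^\infty}\, dt,
\]
and the right-hand side tends to $0$ as $k, m \to \infty$ because both $\tilde F_k$ and $\tilde F_m$ are $C^0$-close to $F$ (all three share the same $\tilde g$, and the smoothing parameters $\eps'_k, \delta_k, \eps_k$ all go to zero). Hence $(\B^d_k)$ is a Cauchy sequence. Since the space of barcodes with finite left-hand endpoints is complete under $d_b$ (this is standard — it follows from the corresponding completeness statement in \cite{Uzi}, or one builds the limit directly as below), there is a barcode $\B^d$ with $d_b(\B^d_k, \B^d) \to 0$.

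Next I would make the limit barcode explicit, which simultaneously handles the uniqueness issue (two barcodes at bottleneck distance zero with finite left-hand endpoints need not be equal unless one controls accumulation of short bars, so one cannot be cavalier here). The paragraph before the claim shows that for each fixed $k$ the degree $d$ barcode $\B^d_k$ has a fixed finite number of bars — one family of endpoints for each relevant crossing value $2\pi l$ between consecutive slopes of $f$, for the $y$-intercept orbit, and for the finitely many exterior orbits, together with all their recappings by multiples of $2N$ in index (shifting action by multiples of $\sigma(\lambda)\gamma$) — and that each endpoint varies continuously in $k$, converging to an explicit real number ($-2\pi l \bar r + f(\bar r)$ for the crossing at $\bar r$, etc.). I would therefore define $\B^d$ to be the barcode whose bars have as endpoints exactly these limiting values, with multiplicities inherited from the (eventually constant) combinatorial type of $\B^d_k$, reconciling bars that happen to merge in the limit. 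A direct $\eps$-matching — match the $i$-th bar of $\B^d_k$ to the $i$-th bar of $\B^d$ — then shows $d_b(\B^d_k, \B^d) \to 0$, and since $\B^d$ is genuinely the pointwise limit of the (finitely many, continuously varying) endpoints, any other bottleneck-limit must have the same endpoint set, giving uniqueness.

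The main obstacle is the uniqueness/well-definedness subtlety just flagged: $d_b$ is a pseudo-metric, so Cauchyness alone gives a limit only up to possibly-nonzero $d_b$-distance, and one must genuinely use the structural description (finitely many bars, each with continuously converging endpoints, and the use of a single fixed $\tilde g$ across the whole sequence) to get an honest limit and to rule out spurious alternatives. In particular I would be careful about pairs of bars whose endpoints converge together: in the limit they either merge into one bar of higher multiplicity or shrink to a zero-length bar, and I need the explicit action computations to decide which, so that the claimed $\B^d$ is unambiguous. Everything else — the Cauchy estimate and the existence of some limit — is a routine consequence of Theorem \ref{BarCont} and completeness.
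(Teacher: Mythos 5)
Your overall plan (Cauchy via Theorem \ref{BarCont}, then an explicit limit barcode built from the limiting action values, then uniqueness because all bars of the limit have positive length) is the same strategy the paper uses, but there is a genuine gap at the one step that carries all the content: the well-definedness of the multiplicities of the limit bars. You assert that $\B^d_k$ has ``a fixed finite number of bars'' with an ``eventually constant combinatorial type,'' each endpoint varying continuously in $k$, and you match ``the $i$-th bar of $\B^d_k$ to the $i$-th bar of $\B^d$.'' None of this follows from the paragraph preceding the claim. What that paragraph gives is convergence of the \emph{set} of degree $d$ (and $d+1$) actions; it does not give any identification of individual bars across different $k$. Which degree-$d$ action is paired with which degree-$(d+1)$ action, and indeed whether a given degree-$d$ generator contributes a left endpoint to $\B^d_k$ or a right endpoint to $\B^{d-1}_k$, is chain-level data (ranks of the filtered differentials) that can in principle change with $k$; only the number of \emph{long} bars near each pair of distinct limiting values is controlled. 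Your proposed remedy for merging/shrinking bars --- ``explicit action computations to decide which'' --- cannot do this job, because the actions only determine the candidate endpoints, not the pairing. The paper closes exactly this gap: it fixes $\eps$ with $4\eps$ smaller than the minimal gap between limiting action values, defines $m^{\eps}_k(a_i,b_j)$ as the number of bars of $\B^d_k$ with endpoints $\eps$-close to $a_i$ and $b_j$ (or with infinite right endpoint), observes that for $a_i\neq b_j$ such bars have length at least $2\eps$ and hence must lie in the domain and range of the $\eps$-matchings supplied by the Cauchy estimate, concludes that these counts are eventually constant, and assembles $\B^d$ from the stabilized counts, discarding the bars whose two endpoints converge to a common value. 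Your proposal needs this counting-and-stabilization argument (or an equivalent) spelled out; as written, the key multiplicity statement is assumed rather than proved.

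Two smaller points. First, the appeal to ``the corresponding completeness statement in \cite{Uzi}'' is not something that paper provides, and completeness of the space of barcodes under $d_b$ is delicate precisely because $d_b$ is only a pseudo-metric; fortunately you hedge by constructing the limit directly, and once the limit is exhibited explicitly the completeness appeal is unnecessary. Second, your uniqueness discussion is on the right track: since the limit barcode has finitely many bars, all of positive length, any two bottleneck-limits are at distance zero and must then coincide, so uniqueness is indeed unproblematic once existence of the explicit limit is established correctly.
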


\begin{proof}  


Let $A = \{ a_i \}_{i = 1}$ be the limiting set of degree $d$ actions, and let $B = \{ b_j \}_{j = 0}$ be the limiting set of degree $d+1$ actions unioned with $\{ \infty \}$ (set $b_0 = \infty$).  Choose $\eps$ so that $4\eps$ is less than the minimal positive distance between all elements of $A \cup (B \backslash \{ \infty \} )$.  For fixed elements $a_{i} \in A$ and $b_{j} \in B$, define the integer $m^{\eps}_k(a_i,b_j)$ to be the number of bars $[a', b')$ in $\langle \B^d_k \rangle$ with $|a_i - a'| < \eps$ and either $|b_j - b'|< \eps$ or $b' = \infty$ in the case that $j = 0$.


For all $k$ big enough, every finite-valued endpoint of $\B^d_k$ is contained within the union of intervals $\cup_{i, j \neq 0} \{ (a_i - \eps, a_i + \eps), (b_j - \eps, b_j + \eps)\}$, and since our sequence of functions $\tilde{F}_k$ is Cauchy with respect to the $C^0$ norm, we may assert the existence of $\eps$-matchings $\mu_{\eps}^{k_1, k_2}$ between $\B^d_{k_1}$ and $\B^d_{k_2}$ for all $k_1, k_2$ big enough.  Moreover, for such $k_1$ and $k_2$, the bars in $\langle \B^d_{k_1} \rangle$, $\langle \B^d_{k_2} \rangle$ which define $m^{\eps}_{k_1}(a_i, b_j)$, $m^{\eps}_{k_2}(a_i, b_j)$, are of length at least $2 \eps$ when $a_i \neq b_j$; these bars are therefore in the domains and ranges of our $\mu_{\eps}^{k_1, k_2}$.  From this, we deduce that the sequence $m^{\eps}_k(a_i, b_j)$ is eventually constant and so converges to some integer $m^{\eps}(a_i, b_j)$ when $a_i \neq b_j$.  Define $\B^d$ to be the collection
\[
\left \{ ( [a_i, b_j), m^{\eps}(a_i,b_j) ) \, \, \middle| \, \, \, a_i \in A, \, b_j \in B, \, a_i \neq b_j, \, m^{\eps}(a_i, b_j) \neq 0 \right \}.
\]

From here, it is easy to conclude that $\B^d$ is in fact the limit of the $\B^d_k$.  Indeed, let $\eps' >0$ be less than $\eps$.  Then for any $k$ large enough, there is clearly an injection $\mu_{\eps'}^k:  \langle \B^d \rangle \rightarrow \langle \B^d_k \rangle$  so that

\begin{itemize}
\item  $\mu_{\eps'}^k$ satisfies the third condition of being an $\eps'$-matching (see Section \ref*{BC}).

\item  any bar not in the range of $\mu_{\eps'}^k$ has endpoints contained in an interval of the form $(c - \eps', c + \eps')$, where $c \in A \cup (B \backslash {\infty})$.

\end{itemize}
(The second condition holds since $\eps' < \eps$.)  In particular, $\mu_{\eps'}^k$ is an $\eps'$-matching.

\end{proof}

Letting $H_k$ be any other sequence of non-degenerate Hamiltonians which $C^0$-converge to $F$ gives another sequence of barcodes $\B^d(H_k)$ which must also necessarily converge to $B^d$; assuming otherwise, we could fix $k'$ big enough and compare $\B^d(H_{k'})$ with $\B^d_{k'}$ from the proof of Claim \ref{degBarCode} to arrive at a contradiction of the continuity of barcodes.  Our function $F$ may therefore be attributed a well-defined barcode in any degree $d$, though we abuse notation and refer to it as the degree $d$ barcode of $f$, or $\B^d(f)$.  It is clear from our construction of $\B^d(f)$ and Theorem \ref{BarCont} that for two piecewise linear functions $f_1$ and $f_2$ satisfying our slope condition, we have
\[
d_b(\B^d(f_1), \B^d(f_2)) \leq ||f_1 - f_2 ||_{L^{\infty}}.
\]


We pause to define some terms.  In the following definitions, $f$ refers to a piecewise linear function satisfying our slope condition, $\{ r_i \}_{i \geq 0}$ are the $r$-values of $f$'s points of non-differentiability in decreasing order with $r_0 = R$, and $\{ m_i \}_{i \geq 1}$ are the slopes of $f$ as we move from right to left (so $m_{i+1}$ and $m_i$ are the slopes on the left and right, respectively, of the point $(r_i, f(r_i))$).

\begin{dfn}  A number $c \in \R$ is a \textit{degree $d$ action of $f$} if it is the limit of a sequence of degree $d$ actions arising from a sequence of standard perturbations of $f$.
\end{dfn}

\begin{dfn}  The \textit{degree $d$ action spectrum of $f$ with multiplicity}, denoted by $\textit{Spec}^d_m(f)$, is the collection of all degree $d$ actions of $f$ considered with multiplicity.  Similarly, the \textit{action spectrum of $f$ with multiplicity} $\textit{Spec}_m(f)$ refers to the union over all degrees $d$ of the $\textit{Spec}_m^d(f)$.
\end{dfn}

In light of Section \ref*{HFH}, it is clear that right-hand endpoints of $\B^d(f)$ are either infinity or elements of $\textit{Spec}_m^{d+1}(f)$, while left-hand endpoints are elements of $\textit{Spec}_m^d(f)$.


\begin{dfn}
If $m_{i+1} < m_i$ (resp. $m_i < m_{i+1}$), then we call $(r_i, f(r_i))$ a \textit{concave up} (resp. \textit{down}) \textit{kink} of $f$.
\end{dfn}

By the comments immediately preceding Claim \ref{degBarCode}, $\textit{Spec}^d_m(f)$ and $\textit{Spec}_m(f)$ are well-defined, and we enumerate the elements of $\textit{Spec}_m(f)$ with their degrees below.

\begin{enumerate}
\item[(1)]  If $(r_i, f(r_i))$ is a concave up kink of $f$ with $m_{i+1} < 2 \pi l < m_i$ for some $l \in \Z$, then $-2 \pi l r_i + f(r_i)$ will be a degree $-2ln +n -1$ and a degree $-2ln - n$ action of $f$.  Furthermore, for any integer $k$, $-2 \pi l r_i + f(r_i) + k \sigma(\lambda) \gamma$ will be a degree $-2ln + n - 1 + k2N$ and a degree $-2ln -n + k2N$ action of $f$ if $N \neq 0$.

\item[(2)]  If $(r_i, f(r_i))$ is a concave down kink of $f$ with $m_{i+1} > 2 \pi l > m_i$ for some $l \in \Z$, then $-2 \pi l r_i + f(r_i)$ will be a degree $-2ln +n$ and a degree $-2ln - n + 1$ action of $f$.  Furthermore, for any integer $k$, $-2 \pi l r_i + f(r_i) + k \sigma(\lambda) \gamma$ will be a degree $-2ln + n + k2N$ and a degree $-2ln -n + 1 + k2N$ action of $f$ if $N \neq 0$.

\item[(3)]  If the slope $s$ of the line coming out of the $y$-axis satisfies $2 \pi l < s < 2 \pi (l+1)$, then $f(0)$ will be a degree $-2ln - n$ action of $f$; as before, for any integer $k$, $f(0) + k \sigma(\lambda) \gamma$ will be a degree $-2ln - n + k2N$ action of $f$ if $N \neq 0$.

\item[(4)]  If $g$ has a critical point of Morse index $j$ outside of $B(2\pi R)$, then $f(R)$ will be a degree $j-n$ action of $f$; as before, for any integer $k$, $f(R) + k\sigma(\lambda)\gamma$ will be a degree $j - n + k2N$ action of $f$ if $N \neq 0$.

\end{enumerate}

Note that a sequence of standard perturbations of $f$ might have some sequence of bars whose lengths go to zero as the sequence progresses, so there is no guarantee that any single action from the above enumeration has to appear in any $\B^d(f)$.  However, it should be clear from our construction of $\B^d(f)$ that if any degree $d$ action from the above enumeration has multiplicity one in $\textit{Spec}_m(f)$, then it must appear in either $\B^d(f)$ or $\B^{d-1}(f)$.

\begin{dfn}  A \textit{kink action} of $f$ is an action coming from either (1) or (2) in the above enumeration, while an \textit{exterior action} of $f$ is one coming from (4).
\end{dfn}

Our final piece of terminology is only to be applied in the case that $N \neq 0$, i.e. that $M$ is monotone but not symplectically aspherical.  Where $f$ satisfies our slope condition with  $\{ r_i \}_{i \geq 0}$ and $\{ m_i \}_{i \geq 1}$ as before, let $S^i$ be the collection of integers $l$ with $2 \pi l$ between $m_i$ and $m_{i+1}$.  

\begin{dfn}  If $N$ and $\gamma$ are both non-zero, we say that $f$ has \textit{distinct kink actions} if

\begin{enumerate}

\item[(1a)]  for any two triples $(r_i, l, k)$ and $(r_{i'}, l', k')$, with $l \in S^{i}$, $l' \in S^{i'}$, and $k, k' \in \Z$, we have the equalities
\[
r_{i} = r_{i'}, \, \, l = l', \, \, k = k'
\]
holding whenever
\[
-2 \pi l r_{i} + f(r_{i}) + k \sigma(\lambda) \gamma = -2 \pi l' r_{i'} + f(r_{i'}) + k' \sigma(\lambda) \gamma \, ;
\]

\item[(1b)] for any triple $(r_i, l, k)$ with $l \in S^i$ and $k \in \Z$, $-2 \pi l r_{i} + f(r_{i}) + k \sigma(\lambda) \gamma$ does not equal $f(0) + k' \sigma(\lambda) \gamma$ or $f(R) + k' \sigma(\lambda) \gamma$ for any integer $k'$.

\end{enumerate}

\vspace{0.5cm}

In the case that $N \neq 0$ and $\gamma = 0$, we say that $f$ has distinct kink actions if 

\begin{enumerate}

\item[(2a)]  for any two pairs $(r_{i}, l)$ and $(r_{i'}, l')$ with $l \in S^{i}$, $l' \in S^{i'}$, we have the equalities
\[
r_{i} = r_{i'}, \, \, l = l'
\]
holding whenever
\[
-2 \pi l r_{i} + f(r_{i})= -2 \pi l' r_{i'} + f(r_{i'});
\]

\item[(2b)]  for any pair $(r_i, l)$ with $l \in S^i$, $-2 \pi l r_{i} + f(r_{i})$ does not equal $f(0)$ or $f(R)$.

\end{enumerate}
\end{dfn}
\vspace{0.5cm}

Conditions (1b) and (2b) ensure that no kink action equals any exterior action or any action coming from the $y$-axis.

With our terminology established, we may conclude this section with a few key lemmas and theorems concerning barcodes of piecewise linear functions.

\begin{lemma}\label{lemma-very-useful}  Let $\eps > 0$ be given, and let $f_1$ and $f_2$ be two piecewise linear functions satisfying our slope condition and the following:

\begin{itemize}

\item $||f_1 - f_2||_{L^{\infty}} < \eps.$

\item the minimal distance between finite $a$ and any action of $f_1$ or $f_2$ outside of $I_{\eps}(a) := (a - \eps, a + \eps)$ is at least $3\eps$.

\end{itemize}

Then for a fixed degree $d$, the number of degree $d$ actions in $I_{\eps}(a)$ which pair with degree $d+1$ actions outside of $I_{\eps}(a)$ is the same for $f_1$ and $f_2$; this conclusion with $d+1$ replaced by $d-1$ also holds.

\end{lemma}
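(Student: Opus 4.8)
The plan is to reduce the statement to the continuity of barcodes (Theorem \ref{BarCont}) together with a bookkeeping argument about which endpoints of $\B^d(f_1)$ and $\B^d(f_2)$ can be matched. First I would recall that, by the construction of $\B^d(f)$ for piecewise linear $f$ satisfying the slope condition, we have $d_b(\B^d(f_1), \B^d(f_2)) \le ||f_1 - f_2||_{L^\infty} < \eps$, and the same for degree $d-1$. Fix an $\eps$-matching $\mu$ between $\langle \B^d(f_1) \rangle$ and $\langle \B^d(f_2) \rangle$, and a second $\eps$-matching $\nu$ between $\langle \B^{d-1}(f_1) \rangle$ and $\langle \B^{d-1}(f_2) \rangle$. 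The key quantitative input is the second hypothesis: since every finite action of $f_1$ or $f_2$ is either within $\eps$ of $a$ or at distance $\ge 3\eps$ from $a$, a bar of $\B^d(f_j)$ with left endpoint in $I_\eps(a)$ and right endpoint (finite) outside $I_\eps(a)$ has length $> 2\eps$, hence lies in the domain (resp.\ image) of $\mu$; moreover its $\mu$-image has left endpoint within $\eps$ of a point within $\eps$ of $a$, so within $2\eps$ of $a$, which by the $3\eps$-separation forces that left endpoint to also lie in $I_\eps(a)$, and similarly the right endpoints of the bar and its image are within $\eps$ of each other and hence both outside $I_\eps(a)$ or both infinite — but if the $f_1$-bar had a finite right endpoint in $I_\eps(a)^c$ at distance $\ge 3\eps$ from $a$, its image's right endpoint is within $\eps$, so also outside. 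The upshot is that $\mu$ restricts to a bijection between the set $S_1$ of degree-$d$ bars of $\B^d(f_1)$ with left endpoint in $I_\eps(a)$ and right endpoint finite and outside $I_\eps(a)$, and the analogous set $S_2$ for $f_2$.

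There is one subtlety I would need to address carefully: a degree $d$ action in $I_\eps(a)$ that "pairs with a degree $d+1$ action outside $I_\eps(a)$" corresponds, by the Remark following Theorem~\ref{BarCont} and the discussion after Definition of $\mathrm{Spec}^d_m(f)$, to a \emph{left} endpoint of a bar of $\B^d(f_j)$ whose right endpoint is that degree $d+1$ action; so I want to count, with multiplicity, bars of $\B^d(f_j)$ whose left endpoint lies in $I_\eps(a)$ and whose right endpoint is finite and lies in $I_\eps(a)^c$. This is exactly $|S_j|$, and the bijection above gives $|S_1| = |S_2|$, which is the first assertion. For the second assertion — replacing $d+1$ by $d-1$ — I observe that a degree $d$ action pairing with a degree $d-1$ action means a degree $d$ action appearing as a \emph{right} endpoint of a bar in $\B^{d-1}(f_j)$; so I count bars of $\B^{d-1}(f_j)$ with finite right endpoint in $I_\eps(a)$ and left endpoint in $I_\eps(a)^c$, and run the identical argument using the matching $\nu$ on degree $d-1$ barcodes. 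Here I must double-check the separation hypothesis still applies: it refers to \emph{any} action of $f_1$ or $f_2$, and left endpoints of $\B^{d-1}$ are degree $d-1$ actions while right endpoints are degree $d$ actions, so all the relevant points are among the actions of $f_1, f_2$ and the $3\eps$ separation from $a$ still holds.

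The main obstacle I anticipate is not conceptual but careful chasing of the $\eps$ versus $2\eps$ versus $3\eps$ inequalities to be certain that: (i) every bar in $S_j$ is long enough to be forced into the domain/image of the matching; (ii) the image of such a bar cannot "escape" $I_\eps(a)$ at its left endpoint nor "fall into" $I_\eps(a)$ at its right endpoint; and (iii) multiplicities are tracked correctly, i.e.\ I am working in $\langle \B^d \rangle$ with indexed copies rather than in $\B^d$, so the matching is genuinely a bijection on the relevant finite subsets. A secondary point worth a sentence is that the right endpoints "outside $I_\eps(a)$" are automatically at distance $\ge 3\eps$ from $a$ by hypothesis (there are no actions in the annulus $\{\eps \le |x - a| < 3\eps\}$), which is what makes the matching respect the "outside" condition; without this gap the statement would be false. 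Once these estimates are pinned down the proof is a two-line consequence of Theorem~\ref{BarCont}.
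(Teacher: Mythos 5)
Your proposal is correct and takes essentially the same approach as the paper's proof: both pass to an $\eps$-matching furnished by $d_b(\B^d(f_1),\B^d(f_2)) \leq ||f_1-f_2||_{L^{\infty}} < \eps$, observe that the bars in question have length greater than $2\eps$ and hence lie in the domain/range of the matching, and use the $3\eps$ separation to force matched bars to keep their degree $d$ endpoints inside and their degree $d+1$ endpoints outside $I_{\eps}(a)$, giving the desired bijection. Your handling of the $d-1$ case by running the identical argument on the degree $d-1$ barcodes is exactly what the paper intends by saying the proof of either implication is the same.
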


\begin{proof}  The proof of either implication is the same, so we restrict our attention to the first.  By the assumption that $||f_1 - f_2||_{L^{\infty}} < \eps$, we know that an $\eps$-matching $\mu_{\eps}$ exists between $\B^d(f_1)$ and $\B^d(f_2)$.  Note that any pairing between a degree $d$ action in $I_{\eps}(a)$ with a degree $d+1$ action outside of $I_{\eps}(a)$ gives rise to a bar of length at least $2 \eps$ and so is in the domain (or range) of $\mu_{\eps}$.  Moreover, our second condition implies that $\mu_{\eps}$ must match such a bar to a bar whose degree $d$ (resp. $d+1$) endpoint also lies inside (resp. outside) of $I_{\eps}(a)$.  Hence, $\mu_{\eps}$ gives a bijection between the set of intervals of the form $[c^d, c^{d+1})$, with $c^d \in I_{\eps}(a)$ and $c^{d+1} \notin I_{\eps}(a)$, for $\B^d(f_1)$ and the set of such intervals for $\B^d(f_2)$.

\end{proof}

Lemma \ref{lemma-very-useful} is helpful in proving the following theorem, which is key to proving Theorem \ref{main-theorem}.  Before proving Theorem \ref{cd-n+1-solo} in full generality, however, we prove it in the case of $f$ having distinct kink actions; Theorem \ref{cd-n+1-solo} applied to this case is expressed as Lemma \ref{lemma-cd-n+1}.

\begin{theorem}\label{cd-n+1-solo}
Let $f$ be any piecewise linear function satisfying our slope condition, and let $c^{n+1}$ be an action which in degree $n+1$ only comes from concave down kinks of $f$.  Then $c^{n+1}$ does not enter into $\B^{n+1}(f)$, and if no degree $n$ action equals $c^{n+1}$, then $c^{n+1}$ must appear in $\B^n(f)$.
\end{theorem}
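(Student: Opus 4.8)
The plan is to analyze separately the role of $c^{n+1}$ as a potential left-hand endpoint (via $\widetilde{\P}_{n+1}$, i.e., as a degree $n+1$ action) and as a potential right-hand endpoint of a degree $n$ bar (via $\widetilde{\P}_{n+1}$ again, but appearing as the target of a pairing in $\B^n(f)$), and to exploit the fact that the only capped periodic orbits of a standard perturbation of $f$ with Conley-Zehnder index exactly $n+1$ come from concave down kinks. First I would recall from the enumeration (item (2)) that a concave down kink $(r_i,f(r_i))$ with $m_{i+1} > 2\pi l > m_i$ contributes the action $-2\pi l r_i + f(r_i)$ in degrees $-2ln+n$ and $-2ln-n+1$ (and their $k2N$-shifts, with corresponding $k\sigma(\lambda)\gamma$-shifts in action, when $N\neq 0$); for this to land in degree $n+1$ one needs $-2ln+n+k2N = n+1$ or $-2ln-n+1+k2N = n+1$ for suitable $l,k$. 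The point is that in each such case the \emph{paired} degree (in the sense of \cite{Uzi}, Theorem 6.2: the degree $d$ and $d+1$ actions pairing in $\B^d(f)$) of the orbit realizing $c^{n+1}$ in degree $n+1$ has Conley-Zehnder index $n+2$ or $n$ for the "upper" partner and $n$ or $n+1$ for the lower — more carefully, I would track exactly which of the two indices appearing for a concave down kink equals $n+1$, and show that the orbit of index $n+1$ is always the \emph{lower-index} orbit of its $S^{2n-1}$ family in the standard perturbation, whose partner under the Morse–Bott splitting has index $n+2$ (so no degree $n$ action pairs with it from below within the perturbed complex).

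Next I would use the structure of the filtered complex of $\tilde F_k$ directly. Since $c^{n+1}$ only comes from concave down kinks in degree $n+1$, Theorem 6.2 of \cite{Uzi} tells us that if $c^{n+1}$ (or an action $C^0$-close to it) were a left-hand endpoint of some bar in $\B^{n+1}(\tilde F_k)$, it would be the action of a generator in $CF_{n+1}(\tilde F_k)$ that is not a boundary but whose image under $\partial$ is a nonzero boundary at a \emph{higher} filtration level — but the whole point of the Morse–Bott computation of indices at a concave down kink is that the index-$(n+1)$ generator of the split family is precisely the one that is killed by (i.e. is in the image of $\partial$ of) its index-$(n+2)$ partner with action essentially equal to $c^{n+1}$ itself, so any bar it could start has length going to $0$; hence $c^{n+1}$ contributes no bar of positive length to $\B^{n+1}(\tilde F_k)$, and passing to the limit $k\to\infty$ in the bottleneck distance (Claim \ref{degBarCode}) shows $c^{n+1}$ does not enter $\B^{n+1}(f)$. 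I would phrase this cleanly using Lemma \ref{lemma-very-useful}: choose $\eps$ small relative to the minimal spacing between $c^{n+1}$ and all other actions of a standard perturbation, and count degree $n+1$ actions in $I_\eps(c^{n+1})$ pairing with degree $n+2$ actions outside $I_\eps(c^{n+1})$; by the index computation this count is $0$.

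For the second assertion — that if no degree $n$ action equals $c^{n+1}$ then $c^{n+1}$ \emph{must} appear in $\B^n(f)$ — I would use the multiplicity bookkeeping from the second Remark after Theorem \ref{BarCont}: every degree $n+1$ action $c$ appears as an endpoint of $\B^{n+1}(f)\cup\B^n(f)$ with multiplicity equal to the number of index-$(n+1)$ capped orbits with that action. Since $c^{n+1}$ is a genuine degree $n+1$ action (coming from at least one concave down kink), it has positive multiplicity in $\textit{Spec}^{n+1}_m(f)$, so it must appear as an endpoint of $\B^{n+1}(f)\cup\B^n(f)$; by the first part it does not appear in $\B^{n+1}(f)$, so it appears as a right-hand endpoint of $\B^n(f)$ — but a right-hand endpoint of $\B^n(f)$ is either $\infty$ or a degree $n+1$ action, and it is finite, so it is the right-hand endpoint of some bar of $\B^n(f)$ with finite left-hand endpoint a degree $n$ action; the hypothesis that no degree $n$ action equals $c^{n+1}$ guarantees that bar has positive length, so $c^{n+1}\in\B^n(f)$. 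The main obstacle I anticipate is the first step: pinning down precisely, for each way $c^{n+1}$ can arise in degree $n+1$ (including all the $N\neq 0$ recappings), that the relevant generator in the standard perturbation is the one \emph{killed} by its Morse–Bott partner rather than one that \emph{kills} a lower generator, and verifying that no exterior action or $y$-intercept action interferes — this requires care with the index formulas $\mu_{cz}([z_1,v_1])$, $\mu_{cz}([z_2,v_2])$ recorded in Section \ref{RSH} and with the sign conventions, and it is where the "distinct kink actions" reduction to Lemma \ref{lemma-cd-n+1} presumably does the real work before the general case is bootstrapped by a limiting/perturbation argument.
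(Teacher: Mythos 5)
Your proposal has two genuine gaps, one in each half, and both stem from trying to replace the paper's global homotopy argument (Lemma \ref{lemma-cd-n+1}) with local bookkeeping. For the first assertion, your key mechanism is that the degree $n+1$ generator at a concave down kink is ``killed by its index-$(n+2)$ Morse--Bott partner with action essentially equal to $c^{n+1}$,'' so that any bar it starts has length tending to zero. But the two orbits into which an $S^{2n-1}$ family splits have Conley--Zehnder indices $-2ln+n$ and $-2ln-n+1$ (see the formulas in Section \ref{RSH}); they differ by $2n-1$, not by $1$. If $c^{n+1}$ is realized in degree $n+1$ at a concave down kink, the same-action, same-kink partner sits in degree $3n$ (or $2-n$), never $n+2$ in general, and for $n\neq 1$ the two cannot even be connected by the Floer differential. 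So there is typically no index-$(n+2)$ generator at action $\approx c^{n+1}$, and the claimed short-bar cancellation has no basis. Whether $c^{n+1}$ enters $\B^{n+1}(f)$ is a global question about the differential (trajectories may run to orbits at other kinks, to the $y$-intercept or exterior orbits, and involve recappings), which is exactly why the paper proves it indirectly: Lemma \ref{lemma-cd-n+1} shows via the folding homotopy, barcode continuity, and (when $\lambda=0$) the energy estimates of Lemma \ref{energy-lemma} that $c^{n+1}$ appears in $\B^n$ for functions with distinct kink actions, and the exclusion from $\B^{n+1}$ then follows from multiplicity-one bookkeeping plus a limiting argument.

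For the second assertion, you invoke the remark that every degree $n+1$ action appears as an endpoint of $\B^{n+1}\cup\B^{n}$ with multiplicity equal to the number of capped orbits realizing it. That statement is proved for \emph{non-degenerate} Hamiltonians; for the limiting barcode of the continuous function induced by $f$ the paper explicitly warns that it fails: along a sequence of standard perturbations, bars can shrink to zero length, so an action of $\textit{Spec}^{n+1}_m(f)$ need not appear in $\B^{n+1}(f)\cup\B^{n}(f)$ at all unless one has extra separation (the paper only guarantees it when the action has multiplicity one in $\textit{Spec}_m(f)$). Your hypothesis rules out degree $n$ actions equal to $c^{n+1}$, but not degree $n+2$ (or degree $3n$, etc.) actions equal or converging to it, so the scenario in which every degree-$(n+1)$ generator near $c^{n+1}$ bounds a short degree-$(n+1)$ bar that disappears in the limit is consistent with your part 1 and is not excluded by your argument; hence the step ``not in $\B^{n+1}(f)$, therefore a right-hand endpoint in $\B^{n}(f)$'' does not follow. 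The paper closes exactly this hole differently: it perturbs $f$ to a nearby $g$ with distinct kink actions, applies Lemma \ref{lemma-cd-n+1} to get a bar in $\B^n(g)$ with right endpoint $\nu^{n+1}(c^{n+1})$ whose length is bounded below (using the hypothesis that no degree $n$ action equals $c^{n+1}$), and transports that bar back to $\B^n(f)$ through an $\eps$-matching. As you yourself anticipate, the ``real work'' is in Lemma \ref{lemma-cd-n+1}; your proposal does not carry out or replace that work, and the local index argument offered in its place is not correct.
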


\begin{lemma}\label{lemma-cd-n+1}  Let $f$ be a piecewise linear graph satisfying our slope condition and having distinct kink actions.  Let $c^{n+1}$ denote a degree $n+1$ action coming from a concave down kink in $f$'s graph.  Then $c^{n+1}$ must appear in $\B^n(f)$.

\end{lemma}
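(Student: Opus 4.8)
The plan is to use the distinct-kink-actions hypothesis to reduce the statement to a single non-vanishing in filtered Floer homology, and then to extract that non-vanishing from the local structure of a standard perturbation near the kink.

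\emph{Reduction.} Since $f$ has distinct kink actions, condition (1a) (or (2a) when $\gamma = 0$) forces $c^{n+1}$ to arise from a unique triple $(r_i, l, k)$ (or pair $(r_i,l)$) as in item (2) of the enumeration of $\textit{Spec}_m(f)$; a parity check on the two degrees attached to that triple, namely $-2ln+n+k2N$ and $-2ln-n+1+k2N$, shows the first can never equal $n+1$, so $c^{n+1}$ is forced to be the degree-$(n+1)$ action from the lower slot (that is, $-2ln-n+1+k2N = n+1$), and then the same triple supplies the value $c^{n+1}$ in degree $-2ln+n+k2N = 3n$ and in no other degree. Condition (1b) (or (2b)) rules out $c^{n+1}$ coinciding with any $y$-axis or exterior action. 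Hence $c^{n+1}$ has multiplicity exactly one in $\textit{Spec}_m^{n+1}(f)$, and, since $3n \neq n$, $c^{n+1}$ is not a degree-$n$ action of $f$. Fix a sequence of standard perturbations $\tilde F_k \to F$ and an $\eps > 0$ smaller than every gap in $\textit{Spec}_m(f)$ relevant near $c^{n+1}$; then for large $k$ the unique degree-$(n+1)$ orbit of $\tilde F_k$ with action in $I_\eps(c^{n+1})$ is the orbit $z_2$ produced at the kink $(r_i,\cdot)$, its partner $z_1$ has degree $3n$, no degree-$n$ orbit of $\tilde F_k$ has action in $(c^{n+1}-3\eps, c^{n+1}+3\eps)$, and (unless $n=1$, where $3n = n+2$) no degree-$(n+2)$ orbit does either. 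Applying to $\tilde F_k$ the action--endpoint correspondence recalled in Section \ref{BC} (Uzi's Proposition 5.5, Theorem 6.2, and the start of the proof of Theorem 12.3), the value $\A(z_2)$ occurs as an endpoint of exactly one bar among $\B^{n+1}(\tilde F_k) \cup \B^n(\tilde F_k)$, there as a degree-$(n+1)$-type endpoint: either as the left endpoint of a bar of $\B^{n+1}(\tilde F_k)$ (necessarily of length bounded away from $0$, or infinite, using $\langle \partial z_1, z_2\rangle = 0$ when $n=1$), or as the right endpoint of a bar of $\B^n(\tilde F_k)$. Letting $k \to \infty$ and matching barcodes transfers this dichotomy to $\B^{n+1}(f)$ versus $\B^n(f)$, so the lemma is equivalent to excluding the first alternative.

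\emph{Translation to the chain level.} I would then fix a large $k$ and filtration levels $c^- < c^+$ in $I_\eps(c^{n+1})$ isolating $\A(z_2)$ and the nearby $\A(z_1)$ from the rest of $\textit{Spec}(\tilde F_k)$. Since $\ell(\partial z_2) < \A(z_2)$ and there is no degree-$n$ action of $\tilde F_k$ in $(c^{n+1}-3\eps, c^{n+1}+3\eps)$, the cycle $\partial z_2 \in CF_n(\tilde F_k)$ has action at most $c^{n+1}-3\eps$. The only generators acquired in passing from $CF_*^{c^-}$ to $CF_*^{c^+}$ are $z_2$ in degree $n+1$ and $z_1$ in degree $3n \geq n+2$, with $\langle \partial z_1, z_2\rangle = 0$ -- automatic on degree grounds when $n \geq 2$, and, when $n=1$, because the two kink orbits form a Morse--Bott family over $S^{2n-1}$ perturbed by a perfect Morse function, so the local Floer differential between them vanishes. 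Tracking how $\ker\partial$ and $\mathrm{Im}\,\partial$ change then shows that $c^{n+1}$ is a left endpoint of a bar of $\B^{n+1}(\tilde F_k)$ exactly when $\partial z_2 \in \partial\bigl(CF_{n+1}^{c^-}(\tilde F_k)\bigr)$, and a right endpoint of a bar of $\B^n(\tilde F_k)$ exactly when $[\partial z_2] \neq 0$ in $HF_n^{c^-}(\tilde F_k)$; these are complementary. So the lemma reduces to
\[
[\partial z_2] \neq 0 \quad\text{in}\quad HF_n^{<c^{n+1}}(\tilde F_k)\quad\text{for all large }k.
\]

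\emph{The core, and the main obstacle.} The remaining step is this non-vanishing, and it is here that the concave \emph{down} hypothesis is genuinely used. The orbit $z_2$ is the one born from the \emph{minimum} of the perfect Morse function on the $S^{2n-1}$-family at radius $\sqrt{2r_i}$, at a kink where the smoothing has negative second derivative. One analyzes the Floer cylinders leaving a neighborhood of this family, as in \cite{CFHW}, \cite{Oancea}, \cite{Sey}: the concavity sign forces $z_2$ to be joined by a Floer cylinder of positive energy to a periodic orbit of strictly smaller action -- an exterior orbit, the center orbit, or an orbit at an adjacent kink of $f$, depending on the shape of $f$ -- so that $\partial z_2 \neq 0$; and, because the distinct-kink-actions hypothesis provides a window $(c^{n+1}-3\eps, c^{n+1}+3\eps)$ free of degree-$n$, degree-$(n+1)$, and (for $n \geq 2$) degree-$(n+2)$ generators of $\tilde F_k$, the cycle $\partial z_2$ cannot be written as $\partial w$ with $\ell(w) < c^{n+1}$ -- it would then have to be a boundary already at action level at most $c^{n+1}-3\eps$, which the explicit form of $\partial z_2$ rules out. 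By way of contrast, at a concave \emph{up} kink the index-$(n+1)$ orbit is instead $z_1$, born from the \emph{maximum}, which plays the opposite (creator) role, consistent with item (1) of the enumeration placing concave-up actions in degrees $3n-1$ and $n$ rather than in $n+1$ and $3n$. Carrying out this cylinder count precisely -- in particular identifying $\partial z_2$ and checking it is not a lower-action boundary -- is the part I expect to require the most work.
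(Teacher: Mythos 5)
Your reduction to the chain-level dichotomy is reasonable in outline, but the proposal has a genuine gap at its crux: the non-vanishing $[\partial z_2] \neq 0$ in $HF_n^{<c^{n+1}}(\tilde F_k)$ \emph{is} the content of the lemma, and you do not prove it. The assertion that the concave-down geometry ``forces $z_2$ to be joined by a Floer cylinder of positive energy to an orbit of strictly smaller action'' would at best give $\partial z_2 \neq 0$ at the chain level; whether $\partial z_2$ is or is not already a boundary at a lower action level is a global question about the entire filtered complex, which no local analysis of the $S^{2n-1}$-family near the kink (in the style of \cite{CFHW}, \cite{Oancea}) can decide, and your stated reason (``the explicit form of $\partial z_2$ rules it out'') is not supported by any identification of $\partial z_2$ --- as you acknowledge in your final sentence. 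The paper obtains the needed global information by an entirely different mechanism: it homotopes the zero function to $f$ by folding along the kinks from the outside in, shows at the moment of $c^{n+1}$'s inception that it must pair with the degree-$n$ action coming from the $y$-intercept (via Lemma \ref{lemma-very-useful} and continuity of barcodes), and then propagates this pairing to $t=1$ by an infimum-of-bad-times argument. Nothing in your outline substitutes for that step, so the core of the lemma remains unproved.

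There is also a specific error in your reduction in the case $\lambda = 0$, $N \neq 0$. There, recapping changes degree but not action, so (for instance when $N = n$, taking $k = l$ in the degree formula $-2ln + n + k2N$) the very same number $c^{n+1}$ \emph{is} also a degree-$n$ action of $f$; conditions (2a)/(2b) in the definition of distinct kink actions do not quantify over recappings precisely because recappings cannot be separated by action when $\gamma = 0$. Hence your claims that no degree-$n$ orbit of $\tilde F_k$ has action in $(c^{n+1}-3\eps, c^{n+1}+3\eps)$, and that the lemma reduces cleanly to $[\partial z_2]\neq 0$ below level $c^{n+1}$, both break down in this case. This is exactly where the paper must supplement barcode continuity with the energy estimate of Lemma \ref{energy-lemma} (following \cite{SeyAnnulus}), which shows that low-energy Floer trajectories stay inside $B(2\pi R)$ and therefore preserve the recapping class, allowing orbits of equal action but different cappings to be distinguished. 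Your proposal would need an analogous mechanism, in addition to the missing cylinder count, before it could be completed.
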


\begin{proof}[ Proof of Lemma \ref{lemma-cd-n+1}]
We restrict our attention to the case of $N \neq 0$, since nearly identical (and even simpler) reasoning applies to the case of $N = 0$.  We also must separate our proof into the cases that $\lambda \neq 0$ (so $\gamma \neq 0$) and $\lambda = 0$ (so $\gamma = 0$ but $M$ is not symplectically aspherical).

\vspace{0.5cm}

\textbf{The case that $\lambda \neq 0$}.

Let $f$ be such a function with $\{ r_i \}_{i \geq 0}$ and $\{ m_i \}_{i \geq 1}$ as previously defined.  Our goal is to choose an appropriate homotopy ending in $f$ for which it will be easy to keep track of the corresponding continuum of barcodes.  Our homotopy of choice is performed by connecting the zero function to $f$ through the series of intermediate functions $g_i$ defined by

\begin{displaymath}
   g_i(r) = \left\{
     \begin{array}{lr}
       f(r), & r \geq r_i\\
       f(r_i) + m_i (r - r_i), & 0 \leq r \leq r_i  
     \end{array}
   \right.
\end{displaymath}
for $i \geq 1$.  We connect these intermediate functions via straight-line homotopies
\[
h_i(t, r) = tg_i + (1-t)g_{i-1},
\]
where we take $g_0$ to be the zero function, and we call the concatenation of these homotopies $h_t$.  Geometrically, this homotopy is taking the graph of the zero function and folding it along the kinks of $f$'s graph from the outside in until $f$'s graph is created (see Figures \ref*{first} - \ref*{fourth}).

\begin{subfigures}
\begin{figure}

\begin{tikzpicture}[scale=.4]\label{bending-one}
		\draw[<->] (-16,0) -- (16,0) node[right] {$r$};
		\draw[dashed] (-4,0.25) -- (-1,4);
		\draw[dashed] (-1, 4) -- (0,0) -- (1,4);
		\draw[dashed] (-10,-0.25) -- (-8.5,-3) -- (-8,0) -- (-7.5,-3) -- (-6,-0.25) -- (-4,0.25);
		\draw[dashed] (-16, 0) -- (-13, 0.25) -- (-12.25, 3.5) -- (-12, 0)-- (-11.75, 3.5)-- (-11, 0.25)-- (-10, -0.25);
		\draw[dashed] (1,4) -- (4,0.25) -- (4, 0.25) -- (6, 0) -- (10, 2) -- (11,0) -- (12, 2) -- (16, 0.25);
		\draw[very thick] (-12/1.75 +12, 5) -- (16, 0.25);
		\draw[dotted] (-16, 5) -- (16, 5);
        \draw[dotted] (-16, -4) -- (16, -4);
\end{tikzpicture}

\caption{One of our functions $g_i$, with the graph of $f$ represented by the dashed lines.}

\label{first}

\vspace{1cm}

\begin{tikzpicture}[scale=.4]\label{bending-two}
		\draw[<->] (-16,0) -- (16,0) node[right] {$r$};
		\draw[dashed] (-4,0.25) -- (-1,4);
		\draw[dashed] (-1, 4) -- (0,0) -- (1,4);
		\draw[dashed] (-10,-0.25) -- (-8.5,-3) -- (-8,0) -- (-7.5,-3) -- (-6,-0.25) -- (-4,0.25);
		\draw[dashed] (-16, 0) -- (-13, 0.25) -- (-12.25, 3.5) -- (-12, 0)-- (-11.75, 3.5)-- (-11, 0.25)-- (-10, -0.25);
		\draw[dashed] (1,4) -- (4,0.25) -- (4, 0.25) -- (6, 0) -- (10, 2) -- (11,0) -- (12, 2) -- (16, 0.25);
		\draw[very thick] (12, 2) -- (16, 0.25);
        \draw[very thick] (9, -4) -- (12, 2);
        \draw[gray] (-16, 5) -- (12,2);
	   	\draw[gray] (-10, 5) -- (12,2);
	   	\draw[gray] (-6, 5) -- (12,2);
        \draw[gray] (-4, 5) -- (12,2);
	   	\draw[gray] (-2, 5) -- (12,2);
	   	\draw[gray] (0, 5) -- (12,2);
	   	\draw[gray] (1, 5) -- (12,2);
		\draw[gray] (2, 5) -- (12,2);
		\draw[gray] (3,5) -- (12,2);
		\draw[gray] (4,5) -- (12, 2);
		\draw[gray] (-16,4) -- (12,2);
		\draw[gray] (-16,3) -- (12,2);
		\draw[gray] (-16,2) -- (12,2);
		\draw[gray] (-16,1) -- (12,2);
		\draw[gray] (-16,0) -- (12,2);
		\draw[gray] (-16, -1) -- (12, 2);
        \draw[gray] (-16, -2) -- (12, 2);
        \draw[gray] (-16, -3) -- (12,2);
        \draw[gray] (-16, -4) -- (12, 2);
        \draw[gray] (-12, -4) -- (12, 2);
        \draw[gray] (-8, -4) -- (12, 2);
        \draw[gray] (-5, -4) -- (12, 2);
        \draw[gray] (-3, -4) -- (12, 2);
        \draw[gray] (-1, -4) -- (12, 2);
        \draw[gray] (0, -4) -- (12, 2);
        \draw[gray] (1, -4) -- (12, 2);
        \draw[gray] (2, -4) -- (12, 2);
        \draw[gray] (3, -4) -- (12, 2);
        \draw[gray] (4, -4) -- (12, 2);
        \draw[gray] (5, -4) -- (12, 2);
        \draw[gray] (6, -4) -- (12, 2);
        \draw[gray] (7, -4) -- (12, 2);
        \draw[gray] (8, -4) -- (12, 2);
        \draw[gray] (9, -4) -- (12, 2);
        \draw[dotted] (-16, 5) -- (16, 5);
        \draw[dotted] (-16, -4) -- (16, -4);
        \draw[gray] (-12/1.75 +12, 5) -- (16, 0.25);
\end{tikzpicture}

\caption{The function $g_i$ bending down at $(r_{i+1}, f(r_{i+1}))$ to make $g_{i+1}$.}

\label{second}

\vspace{1cm}

\begin{tikzpicture}[scale=.4]\label{bending-three}
		\draw[<->] (-16,0) -- (16,0) node[right] {$r$};
		\draw[dashed] (-4,0.25) -- (-1,4);
		\draw[dashed] (-1, 4) -- (0,0) -- (1,4);
		\draw[dashed] (-10,-0.25) -- (-8.5,-3) -- (-8,0) -- (-7.5,-3) -- (-6,-0.25) -- (-4,0.25);
		\draw[dashed] (-16, 0) -- (-13, 0.25) -- (-12.25, 3.5) -- (-12, 0)-- (-11.75, 3.5)-- (-11, 0.25)-- (-10, -0.25);
		\draw[dashed] (1,4) -- (4,0.25) -- (4, 0.25) -- (6, 0) -- (10, 2) -- (11,0) -- (12, 2) -- (16, 0.25);
		\draw[very thick] (12, 2) -- (16, 0.25);
        \draw[very thick] (9, -4) -- (12, 2);
        \draw[dotted] (-16, 5) -- (16, 5);
        \draw[dotted] (-16, -4) -- (16, -4);
\end{tikzpicture}

\caption{The function $g_{i+1}$.}

\label{third}

\vspace{1cm}

\begin{tikzpicture}[scale=.4]\label{bending-four}
		\draw[<->] (-16,0) -- (16,0) node[right] {$r$};
		\draw[dashed] (-4,0.25) -- (-1,4);
		\draw[dashed] (-1, 4) -- (0,0) -- (1,4);
		\draw[dashed] (-10,-0.25) -- (-8.5,-3) -- (-8,0) -- (-7.5,-3) -- (-6,-0.25) -- (-4,0.25);
		\draw[dashed] (-16, 0) -- (-13, 0.25) -- (-12.25, 3.5) -- (-12, 0)-- (-11.75, 3.5)-- (-11, 0.25)-- (-10, -0.25);
		\draw[dashed] (1,4) -- (4,0.25) -- (4, 0.25) -- (6, 0) -- (10, 2) -- (11,0) -- (12, 2) -- (16, 0.25);
		\draw[very thick] (12, 2) -- (16, 0.25);
        \draw[very thick] (11, 0) -- (12, 2);
        \draw[gray] (11, 0) -- (7, -4);
        \draw[gray] (11, 0) -- (5, -4);
        \draw[gray] (11, 0) -- (3, -4);
        \draw[gray] (11, 0) -- (1, -4);
        \draw[gray] (11, 0) -- (-1, -4);
        \draw[gray] (11, 0) -- (-3, -4);
        \draw[gray] (11, 0) -- (-6, -4);
        \draw[gray] (11, 0) -- (-9, -4);
        \draw[gray] (11, 0) -- (-12, -4);
        \draw[gray] (11, 0) -- (-16, -4);
        \draw[gray] (-16, -3) -- (11, 0);
        \draw[gray] (-16, -2) -- (11, 0);
        \draw[gray] (-16, -1) -- (11, 0);
        \draw[gray] (-16, 1) -- (11, 0);
        \draw[gray] (-16, 2) -- (11, 0);
        \draw[gray] (-16, 3) -- (11, 0);
        \draw[gray] (-16, 4) -- (11, 0);
        \draw[gray] (-16, 5) -- (11, 0);
	   	\draw[gray] (-10, 5) -- (11,0);
	   	\draw[gray] (-6, 5) -- (11,0);
        \draw[gray] (-4, 5) -- (11,0);
	   	\draw[gray] (-2, 5) -- (11,0);
	   	\draw[gray] (0, 5) -- (11,0);
	   	\draw[gray] (1, 5) -- (11,0);
		\draw[gray] (2, 5) -- (11,0);
		\draw[gray] (3,5) -- (11,0);
		\draw[gray] (4,5) -- (11, 0);
		\draw[gray] (5, 5) -- (11,0);
	   	\draw[gray] (6, 5) -- (11,0);
		\draw[gray] (7, 5) -- (11,0);
		\draw[gray] (8,5) -- (11,0);
		\draw[gray] (9, -4) -- (12, 2);
        \draw[very thick] (11-5/2, 5) -- (11, 0);
        \draw[dotted] (-16, 5) -- (16, 5);
        \draw[dotted] (-16, -4) -- (16, -4);
\end{tikzpicture}

\caption{The function $g_{i+1}$ bending up at $(r_{i+2}, f(r_{i+2}))$ to make $g_{i+2}$.}

\label{fourth}

\end{figure}
\end{subfigures}

A few comments about the homotopy $h_t$ are in order.  Note that for all but finitely many values of time $T_0 = \{ t_{\alpha} \}$, each function $h_t$ satisfies our slope condition; the times it does not correspond to when the slope out of the $y$-axis is a multiple of $2 \pi$.  Hence, for any $t$ in an interval of the form $(t_{\alpha}, t_{\alpha +1})$, the function $h_t$ has a well-defined barcode.  Next, let $(r^{\alpha}, f(r^{\alpha}))$ be the point of non-differentiability for $f$ at which $h_t$ is bending for $t \in (t_{\alpha}, t_{\alpha}+1)$; then for all times in this interval, we see that the slope on the right of $(r^{\alpha}, f(r^{\alpha}))$ stays constant while the slope $s(t)$ (the slope of the line coming out of the $y$-axis at time $t$) on its left is between $2\pi l$ and $2\pi (l+1)$ for some integer $l$.  This, in conjunction with the point $(r^{\alpha}, f(r^{\alpha}))$ at which this kink occurs being stationary, implies that the set of all actions coming from this kink is the same for all such $h_t$, and the same is clearly true for all such actions coming from kinks $(r_i, f(r_i))$ with $r_i \geq r^{\alpha}$.  From this we can conclude that any change in $\textit{Spec}_m(h_t)$ with $t$ lying in $(t_{\alpha}, t_{\alpha+1})$ can only come from recappings of the $y$-intercept.  The degree of any such action does not change with time since $s(t)$ does not cross a multiple of $2 \pi$.  So for a fixed degree $d$, we may further conclude that $\# |\textit{Spec}^d_m(h_t)|$ stays the same as $t$ varies between $(t_{\alpha}, t_{\alpha + 1})$, and moreover, that the actions of $h_t$ may be parametrized as functions of time with domain $(t_{\alpha}, t_{\alpha + 1})$. Finally, our formulae for the possible degrees of actions coming from the $y$-intercept tell us that they are all of the same parity as $n$, so whenever $d$ has parity differing from $n$, $\textit{Spec}^{d}_m(h_t)$ is the same for all $t$ in $(t_{\alpha}, t_{\alpha + 1})$, i.e. these actions are constant as functions of time.

Now examine what happens at a time $t_{\alpha} \in T_0$.  For $t \in (t_{\alpha - 1}, t_{\alpha})$, we can parametrize the action (with trivial capping) coming from the $y$-intercept of $h_t$ as $h_t(0)$, while recappings of this action will be of the form $h_t(0) + k \sigma(\lambda) \gamma$ with $k \in \Z$.  This parametrization will also hold for times in $(t_{\alpha}, t_{\alpha + 1})$, though the degrees of these actions may differ.

Suppose for now that the function $s(t)$ is increasing, implying that $h_t(0)$ is decreasing and $(r^{\alpha}, f(r^{\alpha}))$ is a concave down kink for $f$.  If $s(t_{\alpha}) = 2 \pi l$ for $l \in \Z$, then $s(t)$ lies between $2 \pi (l -1)$ and $2 \pi l$ for $t \in (t_{\alpha - 1}, t_{\alpha})$, so our enumeration of actions and their degrees tells us that $h_t(0) + k \sigma(\lambda) \gamma$ has index $-2(l-1)n - n + k2N = -2ln+n + k2N$ and limits on $-2 \pi l r^{\alpha} + f(r^{\alpha}) + k \sigma (\lambda) \gamma$ as $t$ goes to $t_{\alpha}$.  Examining $h_t$ for times $t \in (t_{\alpha}, t_{\alpha + 1})$, we note that our kink at $(r^{\alpha}, f(r^{\alpha}))$ has an extra multiple of $2 \pi$ lying between the slopes on its left ($s(t)$) and right, so we have infinitely many new pairs of actions $\{c_{1,k}$, $c_{2,k} \}_{k \in \Z}$ with 

\begin{align*}
c_{1,k} = & -2 \pi l r^{\alpha} + f(r^{\alpha}) + k \sigma(\lambda) \gamma = h_{t_{\alpha}}(0) + k \sigma(\lambda)\gamma, \, \, \, \, \text{of degree} -2ln +n + k2N\\
c_{2, k} = & -2 \pi l r^{\alpha} + f(r^{\alpha}) + k \sigma (\lambda) \gamma = h_{t_{\alpha}}(0) + k \sigma(\lambda)\gamma, \, \, \, \, \text{of degree} -2ln-n+1 + k2N
\end{align*}
coming from the set of kinks in our graph.  In particular, note that $h_t(0) + k \sigma (\lambda) \gamma$ for $t \in (t_{\alpha - 1}, t_{\alpha})$ has index and limiting action equal to the index and action of $c_{1, k}$, while for times $t \in (t_{\alpha}, t_{\alpha + 1})$ it has index $-2ln - n + k2N$ and action limiting on  $-2 \pi l r^{\alpha} + f(r^{\alpha}) + k \sigma(\lambda) \gamma$ as $t$ decreases to $t_{\alpha}$.

Finally, note that for any $t \notin T_0$, the kink actions of $h_t$ are a subset of the kink actions of $f$.

With these observations about $h_t$ out of the way, we continue with our proof.  Set $T = [0,1] \backslash T_0$, and let $4\eps > 0$ be the smaller of the minimal positive distance between all of $f$'s kink actions and $\gamma$.  By our analysis of our homotopy, we know that the minimal distance between $h_t$'s kink actions will be greater than $4\eps$ for all $t \in T$ (where we consider the minimum of the empty set to be infinity, in this case).

Now let $t_0$ be the time of $c^{n+1}$'s inception in $\textit{Spec}_m^{n+1}(f)$.  Since $t_0 \neq 0$, we may find small enough intervals of time $(t_{-1}, t_0), (t_0, t_1) \subset T$ so that for any $t_- \in (t_{-1}, t_0)$ and $t_+ \in (t_0, t_1)$ we have $||h_{t_-} - h_{t_+}||_{L^{\infty}} < \eps$.  This in particular implies that $|h_{t_+}(0) - h_{t_-}(0)| < \eps$ for any pair of times $t_-$ and $t_+$.

So let $t_-$ and $t_+$ be any two such times.  With $t_0$ being the time of $c^{n+1}$'s inception, we must have $t_0 \in T_0$, so we can write $s(t_0) = 2 \pi l$.  The function $s(t)$ must be increasing on the interval $(t_{-1}, t_1)$ because $c^{n+1}$ comes from a concave down kink.  Hence, $s(t_+) > s(t_0)$, so the possible actions coming from the $y$-intercept at time $t_+$ will be of degree
\[
-2ln - n + k 2N
\]
and have the form
\[
h_{t_+}(0) + k \sigma(\lambda) \gamma.
\]

Moreover, since $t_0$ is the time of $c^{n+1}$'s inception and $s(t_0) = 2 \pi l$, we must have a solution $k$ to the equations
\[
-2ln -n + 1 + k 2N = n+1,
\]
and
\[
h_{t_0}(0) + k \sigma (\lambda) \gamma = c^{n+1}.
\]

This same value of $k$ will give us an action $h_{t_+}(0) + k \sigma(\lambda) \gamma$ of degree $n$ coming from the $y$-intercept.  By our choice of $\eps$, this degree $n$ action does not equal any other actions from $h_{t_+}$ and therefore exists in either $\B^{n-1}(h_{t_+})$ or $\B^n(h_{t_+})$.  The number of degree $n$ actions in $I_{\eps}(c^{n+1})$ at time $t_-$ is zero (see Lemma \ref{lemma-very-useful} to recall what $I_{\eps}(c^{n+1})$ denotes), so we may apply Lemma \ref{lemma-very-useful} to say that $h_{t_+}(0) + k \sigma (\lambda) \gamma$ must pair with an action in $I_{\eps}(c^{n+1})$ at time $t_+$.  The only other actions in $I_{\eps}(c^{n+1})$ at time $t_+$ are two actions equal to
\[
c^{n+1} = h_{t_0}(0) + k \sigma (\lambda) \gamma
\]
with one of degree $n+1$ and the other of degree $3n$.  So this degree $n$ action pairs with our degree $n+1$ action, implying that $c^{n+1}$ is an endpoint in $\B^n(h_{t_+})$ and therefore that Lemma \ref{lemma-cd-n+1} holds for any $h_t$ with $t \in (t_0, t_1)$.  See Figure \ref*{action_inception} for a depiction of this evolution of $\B^n(h_t)$, where in the picture for $\B^n(h_{t_+})$ appearing on the right, the red endpoint represents the degree $n$ action $h_{t_+}(0) + k \sigma (\lambda) \gamma$, while the blue (resp. lightly shaded) endpoint represents the degree $n+1$ (resp. $3n$) action $c^{n+1}$.  In the picture for $\B^n(h_{t_-})$, the lightly shaded endpoint represents the degree $3n$ action $h_{t_-}(0) + k \sigma(\lambda) \gamma.$

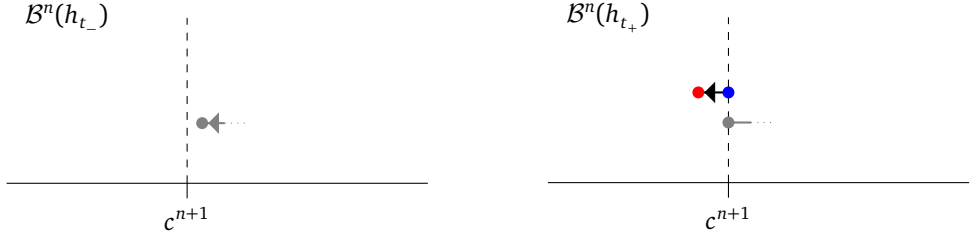
\begin{figure}[ht]
\vspace{1.5cm}
\hspace{2.5cm}
\begin{tikzpicture}[scale=.4]
		\node[] at (-8, 5.5) {$\B^n(h_{t_-})$};
		\draw[-] (-10, 0) -- (4, 0);
		\VertDash{-4}{$c^{n+1}$};
		\ParBarsLight{-3.5}{2}{black}{+};
		\draw[-triangle 90, black!50] (-3.5 + 0.45, 2) -- (-3.5 + 0.45 - 0.25, 2);
\end{tikzpicture}
\hspace{3cm}
\begin{tikzpicture}[scale=.4]
		\node[] at (-8, 5.5) {$\B^n(h_{t_+})$};
		\VertDash{-4}{$c^{n+1}$};
		\ParBarsLight{-4}{2}{black}{+};
		\GrowBarsLeft{-4}{3}{1}{blue}{red};
		\draw[-] (-10, 0) -- (4, 0);
\end{tikzpicture}

\vspace{0.5cm}

\caption{The evolution of $\B^n(h_t)$ with respect to time.  The lower bar in each picture does not appear in $\B^n(h_t)$ as its left-hand endpoint is of degree $3n$; we indicate this absence from $\B^n(h_t)$ by shading it.}

\label{action_inception}

\end{figure}

\begin{remark}  The conventions in this paper for the visualization of a degree $n$ barcode are as follows:  red endpoints correspond to degree $n$ actions, blue to degree $n+1$ actions, black to an action of any other degree, and any bars belonging to a barcode of another degree will be lightly shaded.  Furthermore, an endpoint which moves left or right as we move through a family of barcodes will have an arrow next to it.  Action values are measured along the horizontal axis; a bar's height bears no significance.
\end{remark}

We complete our proof via contradiction, and towards this end we let $t_0' \in [0,1]$ to be the infimum of all times $t > t_0$ in $T$ such that $c^{n+1}$ does not appear in $\B^n(h_t)$.  First, note that $t_0' \notin T_0$.  Assuming otherwise, $t_0'$ would be an infimum of times belonging to a set not including $t_0'$, so we would have a sequence of time values $t_+^i \in T$ decreasing towards $t_0'$ for which $c^{n+1}$ does not appear in $B^n(h_{t_+^i})$.  We know by the reasoning above that there exists a non-empty interval of time $(t_{-1}', t_0')$ with $c^{n+1}$ appearing in $\B^n(h_{t})$ for times $t \in (t_{-1}', t_0')$; for such times in $(t_{-1}', t_0')$, define $c^n_t$ to be the degree $n$ action at time $t$ with $[c^n_t, c^{n+1}) \in \B^n(h_{t})$.  We may use the existence of the $t_+^i$ to assert that $c^n_t$ must go to $c^{n+1}$ as $t$ goes to $t_0$.  (Indeed, given any $\eps' > 0$, choose times $t_- \in (\tm1', t_0')$ and $t_+^i$ which give $||h_{t_+^i} - h_{t_-} ||_{L^{\infty}} < \eps'$ so that an $\eps'$-matching $\mu_{\eps'}$ exists between the two degree $n$ barcodes; since no bar with $c^{n+1}$ as a right endpoint exists in $\B^n(h_{t_+^i})$, we must have the bar $[c^n_{t_-}, c^{n+1})$ being of length less than $2 \eps '$.)  With $c^n_t$ getting arbitrarily close to $c^{n+1}$ as $t$ increases to $t_0'$, our assumption on $f$'s actions therefore says that $c^n_t$ is an action coming from the $y$-intercept of $h_t$.  Yet if $t_0' \in T_0$, our analysis of our homotopy says this implies $h_t$ will have a degree $n$ action equal to $c^{n+1}$ coming from a kink for all times past $t_0'$ and hence for $t = 1$.  This contradiction of our assumptions on $f$'s actions allows us to conclude that $t_0' \notin T_0$.  

So $t_0'$ has to be in $T$.  Moreover, arguments similar to the ones given above show that $c^{n+1}$ does not appear in $\B^n(h_{t_0'})$ and that $c^n_t$ as previously defined must still be an action coming from the $y$-intercept (and so of the form $h_t(0) + k \sigma(\lambda)\gamma$) which converges to $c^{n+1}$ as $t$ goes to $t_0'$.  We must therefore have $t_0' \neq 1$ to avoid contradicting our assumptions on $f$'s actions.  So we can find a non-empty interval of time $(t_{-1}', t_1') \subset T$ containing $t_0'$ with $||h_{t_-} - h_{t_+}||_{L^{\infty}} < \eps$ for every $t_-, t_+ \in (t_{-1}', t_1')$, implying the existence of $\eps$-matchings for the various pairs $\B^n(h_{t_-})$, $\B^n(h_{t_+})$.  Moreover, $h_t(0) + k \sigma (\lambda) \gamma$ will continue to increase past $c^{n+1}$ as $t$ increases past $t_0'$ while remaining an action of index $n$.

Our proof of Lemma \ref{lemma-cd-n+1} for the case of $\lambda \neq 0$ is nearly complete.  Let $t_+ \in (t_0', t_1')$ be given.  At this time, we have our degree $n$ action $h_{t_+}(0) + k \sigma (\lambda) \gamma$ not equal to any degree $n+1$ or $n-1$ actions, so it must appear in either $\B^{n-1}(h_{t_+})$ or $\B^n(h_{t_+})$.  Moreover, this action is higher in action but lower in degree than any other actions in $I_{\eps}(c^{n+1})$ ($c^{n+1}$ of degrees $n+1$ and $3n$), so it must pair with something outside of $I_{3\eps}(c^{n+1})$.  But the number of degree $n$ actions in $I_{\eps}(c^{n+1})$ pairing with anything outside of $I_{3\eps}(c^{n+1})$ was zero for any time in $(t_{-1}', t_0')$, so Lemma \ref{lemma-very-useful} says that the same should be true for $t_+$.  We therefore have a contradiction of the definition of $t_0'$; see Figure \ref*{toprime_contradiction}.

\begin{figure}
\begin{tikzpicture}[scale=.4]
		\VertDash{-4}{$c^{n+1}$};
		\ParBarsLight{-4}{2}{black}{+};
		\StatBars{-5}{3}{1}{red}{blue};
		\draw[-triangle 90] (-4.65, 3) -- +(0.25,0);
		\draw[-] (-10, 0) -- (4, 0);
		\node[] at (-6, 6.2) {$\B^n(h_t)$ for $t \in (t'_{-1}, t'_0)$ };
\end{tikzpicture}
\hspace{1cm}
\begin{tikzpicture}[scale=.4]
		\node[] at (-7, 6.2) {$\B^n(h_t)$ for $t = t_0'$ };
		\VertDash{-4}{$c^{n+1}$};
		\ParBarsLight{-4}{2}{black}{+};
		\node[fill = red, shape = circle, scale = .5] at (-4, 3) {};
		\draw[-triangle 90] (-3.65, 3) -- +(0.25,0);
		\draw[-] (-10, 0) -- (4, 0);
\end{tikzpicture}

\vspace{2cm}

\begin{tikzpicture}[scale=.4]
		\node[] at (-6, 6.2) {$\B^n(h_t)$ for $t \in (t'_0, t'_1)$ };
		\VertDash{-4}{$c^{n+1}$};
		\ParBarsLight{-4}{2}{black}{+};
		\node[fill = blue, shape = circle, scale = .5] at (-4, 3) {};
		\node[fill = red, shape = circle, scale = .5] at (-3.25, 3) {};
		\draw[-] (-10, 0) -- (4, 0);
\end{tikzpicture}

\vspace{0.5cm}

\caption{The evolution of $\B^n(h_t)$ for times close to $t_0'$.  The red, degree $n$ action having nothing to pair with for times immediately following $t_0'$ contradicts the existence of $t_0'$.}
\end{figure}
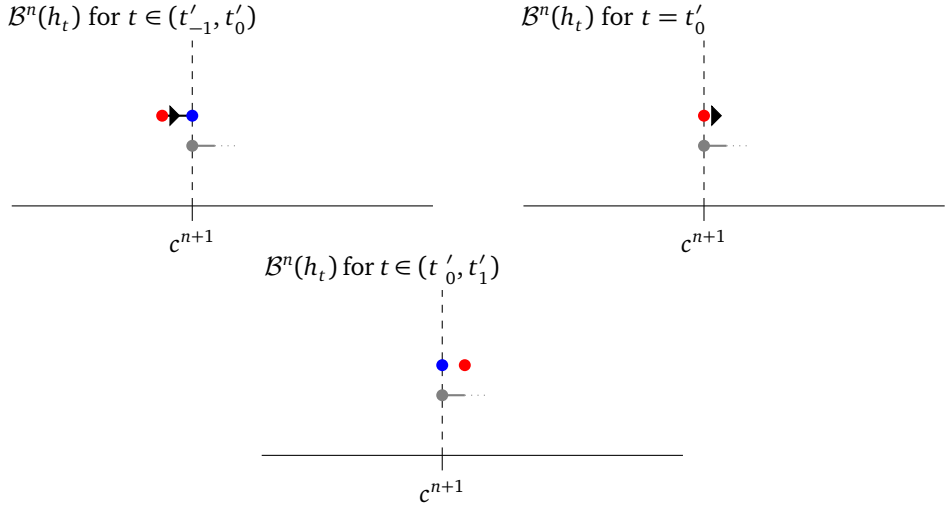

\label{toprime_contradiction}

\vspace{0.5cm}

\textbf{The case that  $\lambda = 0$.}

Before explaining the changes we make for the proof of Lemma \ref{lemma-cd-n+1} in the case that $\lambda = 0$, we take the time to explain their necessity.  There were several instances in the proof of our previous case where we applied Lemma \ref{lemma-very-useful} to compare the barcodes of $h_{t_-}$ with those of $h_{t_+}$ for some times $t_-, t_+$, and then noted that the number of degree $n$ actions in $I_{\eps}(c^{n+1})$ which paired with actions outside of $I_{3 \eps}(c^{n+1})$ was zero for $h_{t_-}$.  Such claims do not always carry over in the case of $\lambda = 0$.

Indeed, for any $\eps > 0$, let $t_0$, $(t_{-1}, t_0)$, $t_0'$, and $(t_{-1}', t_0')$ be as previously defined.  For $t_- \in (t_{-1}, t_0)$, all actions coming from the $y$-intercept will be of the form $h_{t_-}(0)$ (which is within $\eps$ of $c^{n+1} = h_{t_0}(0)$), and if $N = n$, then one such action coming from the $y$-intercept will be of degree $n$.  We therefore cannot say as before that the number of degree $n$ actions in $I_{\eps}(c^{n+1})$ is zero at time $t_-$.  Furthermore, we cannot assert as before that, for times in $(t_{-1}', t_0')$, the number of degree $n$ actions in $I_{\eps}(c^{n+1})$ which pair with something outside of $I_{3\eps}(c^{n+1})$ is zero.  In particular, the case that $N = n$ gives a degree $n$ action precisely equal to $c^{n+1}$ for all times past $t_0$, and this action may very well pair with something outside of $I_{3 \eps}(c^{n+1})$ for times immediately preceding $t_0'$.

With the failures of our previous arguments explained, we now describe how to overcome them.  We have $M$ monotone in the case that $\lambda = 0$, and as described in Section \ref*{HFH}, the Floer differential in this case may be described by a count of solutions to the Hamiltonian Floer equation (though we must choose a generic almost complex structure first).  In this case, if such a solution $u$ connects two capped periodic orbits $\bar{x}$ and $\bar{y}$, then the \textit{energy} of the strip $u$, defined as 
\[
E(u) = \int_{\R} \int_0^1  || \partial_s u ||^2 dt \, ds,
\]
is precisely equal to the difference in the actions of $\bar{x}$ and $\bar{y}$.  We make use of the following lemma, presented here as in \cite{SeyAnnulus}, nearly verbatim.

\begin{lemma}\label{energy-lemma}

Let $V$ denote an open subset of $M$ with (at least)  two distinct smooth boundary components $W_1, W_2$.  Consider a Hamiltonian $H$ which is autonomous in $V$ whose time-1 map $\phi_H^1$ has no fixed points in $V$.  Further assume that $W_1$ and $W_2$ are contained in two distinct level sets of $H$.  Then there exists a constant $\eps(V, H|_V, J|_V) > 0$, depending on the domain $V$ and the restrictions of the Hamiltonian $H$ and the almost complex structure $J$ to the domain $V$, such that if $u: \R \times \R / \Z \rightarrow M$ is a solution to the Hamiltonian Floer equation and intersects $W_1$ and $W_2$, then
\[
E(u) \geq \eps(V, H|_V, J|_V).
\]
\end{lemma}

Recalling the definition of $r_0 = R$ and $r_1$, we choose $V$ to be the subset of $B(2\pi R)$ defined by
\[
V = \left \{ z \in B(2\pi R) \, \, \middle| \, \, r_1 + \frac{R - r_1}{4} \leq \frac{|z|^2}{2} \leq R - \frac{R - r_1}{4} \right \},
\]
and we fix on $M$ a time-dependent $\omega$-compatible almost complex structure $J_0$.

Redefine $T$ in this case to be those values of time for which $h_t$ satisfies our slope condition and equals $m_1(r-R) + f(R)$ on $(r_1, R)$ (this corresponds to having completed the first leg of our homotopy).  We know that $h_t$ is the same on the set $(r_1 + \tfrac{R-r_1}{4}, R - \tfrac{R - r_1}{4})$ for any $t \in T$, so the same may be said for any standard perturbation $\tilde{H}_t$ of such $h_t$ on $V$.

As noted in the introduction, we must pick a regular almost complex structure $J$ for each non-degenerate $\tilde{H}_t$ to have the differential for the Floer chain complex well-defined.  This may lead one to believe that our choices for $J$ may differ on $V$ from perturbation to perturbation.  However (as remarked in \cite{SeyAnnulus}), for every $t \in T$ and any standard perturbation $\tilde{H}_t$ of $h_t$, the periodic orbits of $\tilde{H}_t$ do not enter $V$, implying that our choice of regular almost complex structure may be chosen to equal $J_0$ on $V$.  Hence our $\eps(V, H|_V, J|_V)$ from Lemma \ref{energy-lemma} will work for any standard perturbation $\tilde{H}_t$ of $h_t$ and any $t \in T$.

With Lemma \ref{energy-lemma} introduced, we continue with our proof.  The first part of the proof is essentially the same as the case that $N, \lambda \neq 0$.    We let $t_0$ be the time of $c^{n+1}$'s inception and say that $s(t_0) = 2 \pi l$.  However, we now choose $4\eps$ to be the minimum of the $\eps(V, H|_V, J|_V)$ from Lemma \ref{energy-lemma} and the minimal positive distance between all kink actions of $f$.  With this $\eps$ chosen, we choose appropriate time intervals $(t_{-1}, t_0), (t_0, t_1)$ as before.  We again know that $s(t_+) > s(t_0)$ for any $t_+ \in (t_0, t_1)$, so the possible actions coming from the $y$-intercept will be of degree

\begin{equation} \label{index-1}
-2ln - n + k2N \tag{**}
\end{equation}
and have the form
\[
h_{t_+}(0).
\]
These, along with the actions $h_{t_0}(0)$ of degree either

\begin{equation} \label{index-2}
-2ln + n + k2N \, \, \text{or} \, \, -2ln - n +1 + k2N \tag{***}
\end{equation}
comprise all actions in $I_{\eps}(c^{n+1})$ at time $t_+$.  By our definition of $t_0$, $h_{t_0}(0) = c^{n+1}$.

We claim that:

\begin{claim}
Our $c^{n+1}$ pairs with $h_{\tp}(0)$ of degree $n$.  Hence, $c^{n+1}$ appears in $\B^n(h_t)$ for $t \in (t_0, t_1)$.

\end{claim}

\begin{proof} (of claim)


Let $\tmi \in (\tm1, t_0)$ and $\tp \in (t_0, \t1)$ as always, and let $\eps'$ be small enough so that $I_{\eps'}(c^{n+1}) \cap I_{\eps'}(h_{\tp}(0)) = \emptyset$.  Note that $\eps' < \eps$ by our assumption on $f$'s actions.  Since $||h_{\tp} - h_{\tmi} ||_{L^{\infty}} < \eps$, we can find standard perturbations $\tilde{H}_{-}$ and $\tilde{H}_{+}$ with $\smallint_0^1 ||\tilde{H}_{+} -  \tilde{H}_{-}||_{L^{\infty}} dt < \eps$ (where $t$ in this expression is the $\R / \Z$ parameter and not the homotopy parameter), and we can choose our perturbations $\tilde{H}_{\pm}$ so that every $c^{\pm} \in \textit{Spec}(\tilde{H}_{\pm})$ is within $\eps'$ of its corresponding action in $\textit{Spec}_m(h_{t_{\pm}})$, with the correspondence being clear when one reviews the proof of Claim \ref{degBarCode} and the discussion preceding it.  Our choice of $\eps$ and perturbations further guarantees that actions of $\tilde{H}_{\pm}$ not in $I_{\eps}(c^{n+1})$ are outside of $I_{3 \eps}(c^{n+1})$.  With $\tilde{H}_{-}$ not having any degree $n+1$ actions in $I_{\eps}(c^{n+1})$, we can therefore use a variation of Lemma \ref{lemma-very-useful} to say that any degree $n+1$ actions of $\tilde{H}_{+}$ in $I_{\eps}(c^{n+1})$ must pair with something in $I_{\eps}(c^{n+1})$.

We know that $\tilde{H}_{+}$ has a degree $n+1$ action in $I_{\eps'}(c^{n+1})$ coming from a capped periodic orbit of the form $[x, v \# k_1A]$, with $v$ the capping contained in $B(2\pi R)$ and $[A]$ as chosen in Remark \ref{DefiningA}.  Since this action must pair with an action in $I_{\eps}(c^{n+1})$, and since the Floer differential in the present case is given by a count of solutions to the Hamiltonian Floer equation, there must exist a Floer trajectory $u$ between $[x,v \# k_1A]$ and some other capped periodic orbit $[y,w \# k_2A]$ (again, with $w$ contained in $B(2\pi R)$) whose action lies in $I_{\eps}(c^{n+1})$ (see Theorem 6.2 and the beginning of the proof of Theorem 12.3 from \cite{Uzi} for more on why such a trajectory should exist).  The energy of any such $u$ is less than $2\eps$, which by Lemma \ref{energy-lemma} means it must be contained within our symplectic ball.  Such a $u$ must also satisfy $[u \# w \# k_2A] = [v \# k_1A]$ (or possibly $[w \# k_2A] = [u \# v \# k_1A]$) as elements of $\pi_2(M)\cong \pi_2(M, B(2\pi R))$.  With $u$, $v$, and $w$ all lying in $B(2\pi R)$, we conclude that $k_1 = k_2$.

Note that $k_1$ plugged into the expression on the right in (\ref{index-2}) gives $n+1$, the degree of $c^{n+1}$.  Hence, the capped periodic orbits of $\tilde{H}_{+}$ having $k_1$ copies of $A$ attached and action in $I_{\eps}(c^{n+1})$ must have degree $n$ or $3n$ according to the remaining expressions from (\ref{index-1}) and (\ref{index-2}).  Noting that $n \neq 1$ for the case of $N \neq 0, \lambda = 0$ so that $3n \neq n+2$, we must have $[x, v \# k_1A]$'s action pairing with the degree $n$ action which lies in $I_{\eps'}(h_{\tp}(0))$.  Since such a pairing holds for arbitrarily small $\eps'$ and perturbations of $h_{\tp}$, our claim holds.

\end{proof}

Next, define $t_0'$ (as in the $\lambda \neq 0$ case) as the infimum of all times $t \in T$ for which $c^{n+1}$ is not in $B^n(h_t)$.  Recalling that the degrees of actions coming from the $y$-intercept are of the form
\[
-2l'n - n + k2N,
\]
we may use our reasoning from the $\lambda \neq 0$ case to conclude the following:

\begin{itemize}

\item  $t_0' \notin T_0$, so the various degrees for the $h_t(0)$ use the same integer $l'$ for time values right around $t_0'$.

\item  $h_t(0)$ is increasing for times right around $t_0'$.

\item  One value of $k$ makes $-2l'n - n +  k2N = n$.

\item  $h_{t_0'}(0) = c^{n+1}$.

\end{itemize}
Moreover, we may use another energy argument as in the proof of our previous claim to say that our $k$ value from the third item above must be $k_1$, while yet another such energy argument gives us our contradiction:  For any time $t_+$ sufficiently close to but greater than $t_0'$, any small enough perturbation $\tilde{H}_{+}$ of $h_{\tp}$ must have the action $c$ which corresponds to $c^{n+1}$ pairing with the action of an orbit that has $k_1$ recappings by $A$.  This other action must lie in $I_{\eps}(c^{n+1})$, and the only orbits for $\tilde{H}_{\tp}$ which satisfy all of these properties either have an incompatible degree ($3n$ with $n \neq 1$) or have degree $n$ with action higher than $c$.  This concludes the proof of Lemma \ref{lemma-cd-n+1} in the case that $\lambda = 0$ and thus in general.

\end{proof}

With Lemma \ref{lemma-cd-n+1} in hand, we may now prove Theorem \ref{cd-n+1-solo} with ease.

\begin{proof}[Proof of Theorem \ref{cd-n+1-solo}]

Consider $f$'s kinks $\{ (r_i, f(r_i)) \}_{i \geq 1}$.  By moving these points slightly in the $r$ and $y$ directions and connecting them with straight lines, we may create for any $\eps > 0$ a piecewise linear function $g$ so that

\begin{itemize}

\item  $g$ satisfies our slope condition and has distinct kink actions.

\item  $||f-g ||_{L^{\infty}} < \eps.$

\item  For every degree $d$, a natural bijection $\nu^d$ exists between $\textit{Spec}^d_m(f)$ and $\textit{Spec}^d_m(g)$ with $|\nu^d(c) - c| < \eps$ for all $c \in \textit{Spec}^d_m(f)$.
\end{itemize}

Let $\{ c^{n+1}_i \}_{i = 1}^m \subset \textit{Spec}_m^{n+1}(f)$ be the set of index $n+1$ actions of $f$ which are equal to $c^{n+1}$ (so $c^{n+1}$ has multiplicity $m$ in $\textit{Spec}_m^{n+1}(f)$).  By Lemma \ref{lemma-cd-n+1}, we know that every $\nu^{n+1}(c^{n+1}_i)$ must appear in $\B^n(g)$ and hence not in $\B^{n+1}(g)$.  Choose a sequence $\eps_k \rightarrow 0$ and a corresponding sequence of $g_k$ and apply the continuity of the barcode to conclude that $c^{n+1}$ cannot appear in $\B^{n+1}(f)$.

For the second part of the theorem, let $4 \eps$ be the minimal distance between $c^{n+1}$ and any degree $n$ action of $f$, and choose a function $g$ as above corresponding to $\eps$.  Then again, any $\nu^{n+1}(c^{n+1}_i)$ appears in $\B^n(g)$, and by our choice of $\eps$, it will be the endpoint of a bar of length at least $2 \eps$.  Hence, the $\eps$-matching $\mu_{\eps}$ between $\B^n(f)$ and $\B^n(g)$ has this bar in its range, and its preimage must be a bar in $\B^n(f)$ with right endpoint $c^{n+1}$ and length at least $4 \eps$.  
\end{proof}

Finally, making slight alterations to the proofs of Lemma \ref{lemma-cd-n+1} and Theorem \ref{cd-n+1-solo} yields the following, which are just as essential as Theorem \ref{cd-n+1-solo} to proving our main theorem.

\begin{theorem}\label{lemma-cd-n-1}  Let $f$ be any piecewise linear function satisfying our slope condition, and let $c^{-3n+1}$ be an action which in degree $-3n+1$ only comes from concave down kinks of $f$.  Then $c^{-3n+1}$ does not enter into $\B^{-3n+1}(f)$, and if no degree $-3n$ action equals $c^{-3n+1}$, then $c^{-3n+1}$ must appear in $\B^{-3n}(f)$.

\end{theorem}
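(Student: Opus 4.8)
The plan is to imitate the proofs of Lemma~\ref{lemma-cd-n+1} and Theorem~\ref{cd-n+1-solo} line by line, replacing the degree $n+1$ throughout by $-3n+1$, the degree $n$ by $-3n$, and the ``companion'' degree $3n$ by $-n$, and redoing the Conley--Zehnder bookkeeping accordingly. First I would establish the analogue of Lemma~\ref{lemma-cd-n+1}: if $f$ satisfies the slope condition and has distinct kink actions, and $c^{-3n+1}$ is a degree $-3n+1$ action coming from a concave down kink, then $c^{-3n+1}$ appears in $\B^{-3n}(f)$. As before, one connects the zero function to $f$ by the folding homotopy $h_t$ through the intermediate functions $g_i$, lets $t_0\in T_0$ be the inception time of $c^{-3n+1}$, writes $s(t_0)=2\pi l$, and notes that $s(t)$ is increasing through $t_0$ since the kink is concave down. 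The index formulas in the enumeration of actions in Section~\ref{RSH} now force $l=1$: a concave down kink action has degrees $-2ln+n$ and $-2ln-n+1$, the equation $-2ln+n=-3n+1$ has no integer solution by parity, and $-2ln-n+1=-3n+1$ gives $l=1$, whence the companion degree is $-2ln+n=-n$. Solving $-2ln-n+1+k2N=-3n+1$ yields $k=0$, so at $t_0$ the $y$-intercept action equals $c^{-3n+1}$ exactly, and for $t_+\in(t_0,t_1)$ (where $s(t_+)>2\pi$) the $y$-intercept action $h_{t_+}(0)$ has degree $-2ln-n=-3n$, while for $t_-\in(t_{-1},t_0)$ it has degree $-n$ and limits to $c^{-3n+1}$.

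From this point the argument is exactly that of Lemma~\ref{lemma-cd-n+1}: at $t_-$ there are no degree $-3n$ actions in $I_\eps(c^{-3n+1})$, so Lemma~\ref{lemma-very-useful} forces the degree $-3n$ action $h_{t_+}(0)$ to pair with one of the two actions in $I_\eps(c^{-3n+1})$ equal to $c^{-3n+1}$ (these have degrees $-3n+1$ and $-n$); being of degree $-3n$ it must pair with the degree $-3n+1$ one, so $c^{-3n+1}\in\B^{-3n}(h_{t_+})$. One then repeats the contradiction argument with $t_0'$ the infimum of times $t>t_0$ for which $c^{-3n+1}\notin\B^{-3n}(h_t)$: since all $y$-intercept degrees are $\equiv n\pmod 2$ and hence never equal $-3n+1$, and since $f$ has distinct kink actions, $t_0'\notin T_0$; then just after $t_0'$ the degree $-3n$ $y$-intercept action has moved past $c^{-3n+1}$ while staying of degree $-3n$, and, being the highest-valued but lowest-degree action in $I_\eps(c^{-3n+1})$, it has nothing to pair with, contradicting the definition of $t_0'$.

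For the subcase $\lambda=0$ (so $N\neq 0$, $M$ symplectically aspherical) the purely combinatorial matching steps must again be supplemented by the energy estimate of Lemma~\ref{energy-lemma} on the annular region $V\subset B(2\pi R)$, using that standard perturbations of the $h_t$ coincide on $V$; the only numerical change is that one must separate the companion degree $-n$ from the degree $-3n+2$, i.e.\ use $n\neq1$, which is available here for the same reason as in the proof of Lemma~\ref{lemma-cd-n+1}. Finally, the deduction of the full Theorem~\ref{lemma-cd-n-1} from this distinct-kink-actions statement is verbatim the proof of Theorem~\ref{cd-n+1-solo}: one approximates $f$ in $L^\infty$ by piecewise linear $g$ with distinct kink actions, transports $c^{-3n+1}$ and its multiplicity via the degreewise bijections $\nu^d$, applies the lemma to each copy $\nu^{-3n+1}(c^{-3n+1}_i)$, and uses continuity of barcodes in the bottleneck distance (the inequality following Claim~\ref{degBarCode}, cf.\ Theorem~\ref{BarCont}) to conclude both that $c^{-3n+1}$ does not enter $\B^{-3n+1}(f)$ and that, if no degree $-3n$ action equals $c^{-3n+1}$, it appears in $\B^{-3n}(f)$.

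I expect the main obstacle to be entirely clerical: checking that each index and action computation from the original proofs survives the substitution, in particular that $l$ is pinned to $1$, that the recapping integer is again $0$ so the distinguished $y$-intercept action literally equals $c^{-3n+1}$ at inception, and that the parity and $n\neq1$ arguments still isolate the companion degree $-n$ from its neighbours. Nothing structural (the homotopy, Lemma~\ref{lemma-very-useful}, the $t_0'$ argument, the energy argument for $\lambda=0$, or the $L^\infty$-approximation argument) needs modification.
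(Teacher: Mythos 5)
Your overall route is exactly the paper's: Theorem \ref{lemma-cd-n-1} is obtained by rerunning the proofs of Lemma \ref{lemma-cd-n+1} and Theorem \ref{cd-n+1-solo} with the degrees $n+1$, $n$, $3n$ replaced by $-3n+1$, $-3n$, $-n$, and your structural conclusions (the companion degree $-n$, the parity argument showing $y$-intercept actions never have degree $-3n+1$, the $n\neq 1$ separation needed in the $\lambda=0$ energy argument, and the final $L^{\infty}$-approximation by functions with distinct kink actions) are the right ones.

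One piece of your bookkeeping is wrong, however, and as literally written it would break the argument in some cases: you claim the index formulas force $l=1$ and the recapping integer $k=0$, so that the trivially capped $y$-intercept action $h_{t_+}(0)$ itself has degree $-3n$ and equals $c^{-3n+1}$ at inception. You derived $l=1$ from $-2ln-n+1=-3n+1$ \emph{without} the recapping term; with recapping the constraint is $-2ln-n+1+k2N=-3n+1$, i.e. $n(l-1)=kN$, which has solutions with $l\neq 1$, $k\neq 0$ whenever $N$ divides $n(l-1)$ (for instance $n=2$, $N=1$, $l=2$, $k=2$). In such cases $c^{-3n+1}$ is born at a crossing $s(t_0)=2\pi l$ with $l\neq 1$ as a \emph{recapped} kink action, the trivially capped $y$-intercept action has degree $-2ln-n\neq -3n$, and the distinguished degree $-3n$ action near $c^{-3n+1}$ is simply not $h_{t_+}(0)$. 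The repair is exactly the device already used in the proof of Lemma \ref{lemma-cd-n+1}: carry the recapped $y$-intercept action $h_t(0)+k\sigma(\lambda)\gamma$ for the $k$ solving $n(l-1)=kN$. Its degree is then $-n$ for $t\in(t_{-1},t_0)$ and $-3n$ for $t\in(t_0,t_1)$, it limits on $c^{-3n+1}$ as $t\to t_0$, and every subsequent step you describe (the application of Lemma \ref{lemma-very-useful}, the $t_0'$ contradiction, and the $\lambda=0$ energy argument, in which the relevant orbits all carry the same recapping $k_1$) goes through unchanged.
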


\begin{theorem}\label{lemma-cu}  Let $f$ be any piecewise linear function satisfying our slope condition, and let $c$ represent an action which, in degree $n+1$ (respectively, $-3n+1$), only comes from concave up kinks in $f$'s graph.  Then $c$ does not enter into $\B^{n}(f)$ (resp. $\B^{-3n}(f)$).  Furthermore, if no degree $n+2$ (resp. $-3n+2$) actions equal $c$, then $c$ appears as the left-hand endpoint of a bar in $\B^{n+1}(f)$ (resp. $\B^{-3n+1}(f)$).

\end{theorem}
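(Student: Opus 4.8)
The plan is to obtain both theorems by transcribing the proofs of Theorem~\ref{cd-n+1-solo} and its distinct-kink-actions precursor Lemma~\ref{lemma-cd-n+1}, with the roles of the ``birth'' and the ``death'' of a bar interchanged. Concretely, I would first establish Theorem~\ref{lemma-cu} under the extra assumption that $f$ has distinct kink actions, and then remove that assumption exactly as in the proof of Theorem~\ref{cd-n+1-solo}: approximate $f$ in $L^\infty$ by a piecewise linear $g$ with distinct kink actions together with degree-preserving near-identity bijections $\nu^d$ of the action spectra, apply the distinct-actions case to each preimage, and push the conclusion back to $f$ through the $\eps$-matchings furnished by continuity of barcodes as $\eps\to 0$.

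For the distinct-actions case I would run the same outside-in folding homotopy $h_t$ from the zero function to $f$ used in Lemma~\ref{lemma-cd-n+1}. By the enumeration of actions and degrees in Section~\ref{RSH}, a degree $n+1$ action $c$ which in that degree comes only from concave up kinks must arise from the \emph{higher} branch $-2ln+n-1+k2N$ of such a kink (the lower branch $-2ln-n+k2N$ is excluded in degree $n+1$ on parity grounds, just as the higher concave-down branch was excluded in Lemma~\ref{lemma-cd-n+1}); fix the associated $l,k$, so $-2ln+k2N=2$. Let $t_0$ be the inception time of $c$, so $s(t_0)=2\pi l$; since the pivot at $t_0$ is at a concave up kink, $s(t)$ is \emph{decreasing} and $h_t(0)$ is \emph{increasing} near $t_0$. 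For $t$ just below $t_0$ there is no degree $n+2$ action in $I_\eps(c)$ — the only action there is the $y$-intercept action $h_t(0)+k\sigma(\lambda)\gamma$, of degree $-2ln-n+k2N=2-n$; for $t$ just above $t_0$ that same $y$-intercept action has degree $-2ln+n+k2N=n+2$ and lies just above $c$, while the kink contributes $c$ in degrees $n+1$ and $2-n$. Now Lemma~\ref{lemma-very-useful} (applied with $a=c$, in degree $n+1$ in its ``$d-1$'' form) finishes the step: the count of degree $n+1$ actions in $I_\eps(c)$ pairing with a degree $n$ action outside $I_\eps(c)$ is $0$ before $t_0$, hence $0$ after, so $c$ is not a right endpoint of any bar of $\B^n(h_t)$; as $c$ has multiplicity one it is then a left endpoint of a bar of $\B^{n+1}(h_t)$ (whose other endpoint one checks to be the new degree $n+2$ action), and in particular $c$ does not enter $\B^n(h_t)$.

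Propagating this from $t$ just past $t_0$ to $t=1$ (i.e.\ to $f$) is done by the same contradiction as in Lemma~\ref{lemma-cd-n+1}: letting $t_0'$ be the infimum of times $t>t_0$ in $T$ at which $c$ fails to be a left endpoint of $\B^{n+1}(h_t)$, one shows $t_0'\notin T_0$ (else a limiting degree $n+2$ partner of $c$ would be a $y$-intercept action, forcing a degree $n+2$ \emph{kink} action equal to $c$ at $t=1$, contradicting distinct kink actions), then $t_0'\ne 1$, and finally Lemma~\ref{lemma-very-useful} at $t_0'$ gives a contradiction. In the monotone but non-aspherical case $\lambda=0$, $N\ne 0$, where recappings cannot be tracked purely formally, one replaces this bookkeeping — exactly as in Lemma~\ref{lemma-cd-n+1} — by the energy gap of Lemma~\ref{energy-lemma}: choosing $4\eps$ below that gap, any Floer strip realizing a pairing inside $I_\eps(c)$ has energy $<2\eps$, hence stays in $B(2\pi R)$, which pins down the number of copies of $[A]$ attached to the relevant orbits and thus their Conley--Zehnder degrees; the non-coincidences of degrees that this needs ($2-n\ne n+1$, $2-n\ne n+2$, and $n\ne 1$ so the extraneous degree does not collide with $n+2$) all hold here.

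Finally, the two $-3n$ statements of Theorem~\ref{lemma-cu} follow from the $n+1$ argument above under the substitutions $n+1\leadsto -3n+1$, $n\leadsto -3n$, $n+2\leadsto -3n+2$, $2-n\leadsto -5n+2$ (the relevant action again coming from the higher concave up branch, now with $-2ln+k2N=-4n+2$), and Theorem~\ref{lemma-cd-n-1} is the concave-down statement in these ``complementary'' degrees, obtained from Theorem~\ref{cd-n+1-solo} and Lemma~\ref{lemma-cd-n+1} under $n+1\leadsto -3n+1$, $n\leadsto -3n$, $3n\leadsto -n$; in every case the degree non-coincidences reduce to $n\ne 0$, $n\ne\tfrac12$, and (in the $\lambda=0$ subcase) $n\ne 1$, all of which hold. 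I expect the only real obstacle to be, as in the original proofs, the $\lambda=0$, $N\ne 0$ subcase, where formal control of recappings must be traded for the confinement of Floer strips to $B(2\pi R)$ supplied by Lemma~\ref{energy-lemma}; everything else is a mechanical transcription of the concave-down arguments with the words ``birth'' and ``death'' exchanged.
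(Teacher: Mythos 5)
Your proposal is correct and follows essentially the paper's own route: the paper gives no separate argument for Theorem \ref{lemma-cu}, asserting only that it follows by ``slight alterations'' to the proofs of Lemma \ref{lemma-cd-n+1} and Theorem \ref{cd-n+1-solo}, and your transcription — the higher concave-up branch with $-2ln+k2N=2$, $s(t)$ decreasing and $h_t(0)$ increasing through the inception time, Lemma \ref{lemma-very-useful} in its $d-1$ form, the folding homotopy and $t_0'$ contradiction, the reduction of the general case via near-identity bijections $\nu^d$ and $\eps$-matchings, and the energy argument of Lemma \ref{energy-lemma} in the $\lambda=0$, $N\neq 0$ subcase — is precisely that alteration. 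The degree bookkeeping you record (the extraneous branch $2-n$, resp. $-5n+2$, in place of $3n$, and $n\neq 1$ holding automatically when $\lambda=0$, $N\neq 0$) checks out, so I see no gap.
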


\end{section}

\begin{section}{Proof of the main theorem}\label{Main_Proof}

Now suppose we have symplectically embedded our ball $B(2\pi R)$ and let $\eps > 0$.  Separate the interval $(R - \eps, R]$ into the union of intervals
\[
\bigcup_{i=1}^{\infty} [R-\eps + \eps(1/2)^{i}, R-\eps + \eps(1/2)^{i-1}].
\]
To define the functions which will later define our embedding, we start by defining for each $i$ a piecewise linear function $f_i: [0,R] \rightarrow \R$ which is supported in the interval $I_i = [R-\eps + \eps(1/2)^{i}, R-\eps + \eps(1/2)^{i-1}]$.  These $f_i$ are defined as follows:

\begin{itemize}
\item $f_i$ is $0$ at the midpoint $r_{i,2}$ of $I_i$ and on $U_{i,1}$, $U_{i,2}$, where $U_{i,1}$, $U_{i,2}$ are small neighborhoods of $I_i$'s left and right endpoints, respectively.

\item $f_i$ is $2\pi R$ at points $r_{i,1}$, $r_{i,3}$, where the interval $(r_{i,1}, r_{i,3})$ is centered at $r_{i,2}$.

\item $f_i$ is linear and increasing, with slope an irrational multiple of $2 \pi$, from the right-hand endpoint of $U_{i,1}$ to $r_{i,1}$ and from $r_{i,2}$ to $r_{i,3}$.

\item $f_i$ is linear and decreasing, with slope an irrational multiple of $2 \pi$, from $r_{i,1}$ to $r_{i,2}$ and from $r_{i,3}$ to the left-hand endpoint of $U_{i,2}$.

\end{itemize}

See Figure \ref*{sample_two}.

\begin{figure}
\centering

\begin{tikzpicture}[scale=.4]
		\draw[-] (-5.5, 0) -- (5.5, 0);
		\draw[dotted] (-6.5,0) -- (-5.5,0);
		\draw[dotted] (5.5,0) -- (6.5, 0) node[right] {$r$};
		\draw[-, very thick] (-5.5, 0) -- (-5,0);
		\draw[-, very thick] (5,0) -- (5.5, 0);
		\draw[very thick] (-5,0) -- (-4,0) -- (-1,4) -- (0,0) -- (1,4) -- (4,0) -- (5,0);
		\node[fill = black, shape = circle, scale = .5] at (-4,0) {};
		\node[fill = black, shape = circle, scale = .5] at (4,0) {};
	    \node[fill = black, shape = circle, scale = .5] at (-1,4) {};
		\node[fill = black, shape = circle, scale = .5] at (0,0) {};
		\node[fill = black, shape = circle, scale = .5] at (1,4) {};		
		\node[fill = black, shape = circle, scale = .5] at (-5,0) {};
		\node[above right] at (1,4) {$(r_{i,3}, \, 2\pi R)$};
		\node[above left] at (-1,4) {$(r_{i,1}, \, 2\pi R)$};
		\node[below] at (0,0) {$(r_{i,2}, \, 0)$};
		\node[fill = black, shape = circle, scale = .5] at (5,0) {};
\end{tikzpicture}

\caption{One of our $f_i$'s.  The neighborhoods $U_{i,1}$ and $U_{i,2}$ have endpoints marked by the first two and last two nodes, respectively.}

\label{sample_two}

\end{figure}
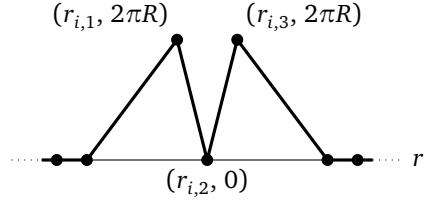

Choose for each $i$ a smooth function $\bar{f}_i$, also supported in $I_i$, which is less than $\eps (1/2)^i$ away from $f_i$ in the $C^0$ norm and has maximum less than $2 \pi R$.  Each such $\bar{f}_i$ induces a Hamiltonian $\bar{F}_i : M \rightarrow \R$, and we define our embedding $ \Phi:  [0,1]^{\infty} \rightarrow Ham(M, \omega)$ by
\[
\Phi(a) = \phi_{\sum_{i=1}^{\infty} a_i \bar{F}_i},
\]
i.e. the sequence $a = \{ a_i \}_{i \geq 1}$ is sent to the Hamiltonian diffeomorphism generated by $\sum_{i=1}^{\infty} a_i \bar{F}_i$.  We will sometimes abuse notation and refer to such diffeomorphisms as being generated by $\sum_{i=1}^{\infty} a_i \bar{f}_i$ instead.

By the definition of the Hofer distance between two Hamiltonian diffeomorphisms, the chain of inequalities from our theorem is equivalent to
\[
2 \pi R||a - b||_{\ell^{\infty}} - \eps \leq ||\Phi(a)^{-1} \circ \Phi(b) ||_H \leq 4 \pi R ||a - b||_{\ell^{\infty}}.
\]
With $\Phi(a)$ being generated by the \textit{autonomous} $\sum_{i=1}^{\infty} a_i \bar{F}_i$, $\Phi(a)^{-1}$ is generated by $\sum_{i=1}^{\infty} -a_i \bar{F}_i$, and since the functions $\sum_{i=1}^{\infty} -a_i \bar{F}_i$ and $\sum_{i=1}^{\infty} b_i \bar{F}_i$ Poisson commute, $\Phi(a)^{-1} \circ \Phi(b)$ is generated by the function $\sum_{i=1}^{\infty} ((b_i - a_i) \bar{F}_i)$.  This expression makes the right-most inequality above trivial.  Indeed, by definition, the Hofer norm of any Hamiltonian diffeomorphism generated by an autonomous function $H$ will be less than or equal to the difference between $H$'s maximum and minimum values, which in turn is less than twice the maximum of its absolute value.  For our function $\sum_{i=1}^{\infty} ((b_i - a_i) \bar{F}_i)$, this quantity is bounded above by $4 \pi R ||a - b||_{\ell^{\infty}}.$

Furthermore, note that $\{ b_i - a_i \}_{ i \geq 1}$ is a sequence with entries in $[-1,1]$,  so that proving the left inequality from our theorem is implied by the following:

\begin{theorem} \label{pen-theorem}

Any function $\sum_{i=1}^{\infty} a_i \bar{f}_i$ with $\{ a_i \} \in [-1,1]^{\infty}$ and $\bar{f}_i$ as above induces a diffeomorphism $\phi$ whose boundary depth $\beta(\phi)$ satisfies $\beta (\phi) \geq 2 \pi R (\text{max}_i|a_i|) - (4 \pi + 7)\eps$.

\end{theorem}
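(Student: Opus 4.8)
The plan is to bound $\beta(\phi)$ from below by exhibiting a single finite-length bar of length at least $2\pi R\,(\max_i|a_i|)-(4\pi+6)\eps$ in one of the barcodes $\B^d$ of a Hamiltonian generating $\phi$; since $\beta(\phi)$ is the supremum over $d$ of the length of the longest finite bar of $\B^d$, and since passing to a $C^0$-close generator costs at most $\eps$ in boundary depth by Theorem \ref{theorem-barely}, this suffices. Write $a=\max_i|a_i|=|a_{i_0}|$ (only finitely many $a_i$ are nonzero, so $i_0$ exists); if $2\pi R a\le(4\pi+7)\eps$ the inequality is vacuous, as $\beta(\phi)\ge0$, so assume $a$ exceeds this threshold. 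First I would replace $\phi$ by the (degenerate) diffeomorphism generated by the piecewise linear $f=\sum_i a_i f_i$: the $\bar f_i$ have pairwise disjoint supports and $\|\bar f_i-f_i\|_{L^\infty}<\eps2^{-i}$, so the Hamiltonian induced by $f$ is $C^0$-distance less than $\tfrac{\eps}{2}$ from $\sum_i a_i\bar F_i$, and we lose at most $\eps$ in boundary depth. Using the barcode-continuity estimate following Claim \ref{degBarCode}, an arbitrarily small further perturbation lets us assume $f$ satisfies the slope condition, and --- since such a perturbation moves every action by a negligible amount --- that the finitely many action coincidences ruled out below do not occur.

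\textbf{The case $a_{i_0}>0$.} Near $I_{i_0}$ the function $f$ equals $a_{i_0}f_{i_0}$, so it has a concave down kink at the first peak $(r_{i_0,1},\,2\pi R a_{i_0})$ whose left and right slopes are $a_{i_0}$ times the up- and down-slopes of $f_{i_0}$. Because $f_{i_0}$ rises or falls by $2\pi R$ across each of its legs, all inside an interval of length less than $\eps2^{-(i_0-1)}$, every slope of $f_{i_0}$ has magnitude greater than $2\pi R/\eps$, whence $a|s|>4\pi+7>2\pi$ for the range of $a$ we are in; in particular $-2\pi$ lies strictly between the two slopes at this kink, and item (2) of the enumeration of $\textit{Spec}_m(f)$ shows that $c^{n+1}:=2\pi r_{i_0,1}+2\pi R a_{i_0}$ is a degree $n+1$ action of $f$ (and, from the same kink, a degree $3n$ action). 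Every action in the enumeration has the shape $(\text{height})-2\pi l r$ with height in $[-2\pi R,2\pi R]$ and $r\in(R-\eps,R]$, possibly shifted by an integer multiple of $\gamma$; since $\gamma$ is either $0$ or $\ge4\pi R$, no $\gamma$-translate can cluster near $c^{n+1}\le4\pi R$, and the remaining finitely many would-be coincidences were removed by the preliminary perturbation. Thus $c^{n+1}$ comes in degree $n+1$ only from concave down kinks and no degree $n$ action equals it, so Theorem \ref{cd-n+1-solo} applies: there is a finite bar $[c^n,c^{n+1})\in\B^n(f)$ with $c^n$ a degree $n$ action.

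It remains to bound $c^n$ from above. Running through the enumeration again, every degree $n$ action of $f$ that is not a $\gamma$-translate is one of: $f(r_j)\in\{0,\,2\pi Ra_j\}$ from a concave down kink with $l=0$; $f(r_j)+2\pi r_j$ from a concave up kink with $l=-1$, where necessarily $f(r_j)\le0$ because the concave up kinks of the $f_i$ occur only where the function vanishes or (when $a_j<0$) is negative; or an exterior or $y$-axis action within $O(\eps)$ of $0$. Each of these is at most $2\pi R$, and the $\gamma$-translates lying below $c^{n+1}$ are, using $\gamma\ge4\pi R$ and $c^{n+1}\le4\pi R$, either at most $O(\eps)$ or equal to $c^{n+1}$ (excluded). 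Hence $c^n\le2\pi R+O(\eps)$, so the bar $[c^n,c^{n+1})$ has length at least
\[
2\pi r_{i_0,1}+2\pi Ra_{i_0}-2\pi R-O(\eps)\ \ge\ 2\pi R a-4\pi\eps-O(\eps),
\]
using $r_{i_0,1}>R-\eps$; combining with the $\eps$ lost in the reduction and tracking the $O(\eps)$ terms yields $\beta(\phi)\ge2\pi Ra-(4\pi+7)\eps$.

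\textbf{The case $a_{i_0}<0$} is handled symmetrically, either by applying the previous argument to $\phi^{-1}$ (generated by $\sum_i(-a_i)\bar F_i$, whose extremal coefficient $-a_{i_0}$ is now positive) or directly: the extremal-value kink of $f$ from the $i_0$-th tent, at $(r_{i_0,1},\,2\pi Ra_{i_0})$ with $2\pi Ra_{i_0}<0$, is now concave up, its steep slopes straddle the relevant multiples of $2\pi$, and the concave-up analogues Theorems \ref{lemma-cu} and \ref{lemma-cd-n-1} play the roles of Theorem \ref{cd-n+1-solo}, the relevant barcode being $\B^{-3n}(f)$; again the enumeration together with $\gamma=0$ or $\gamma\ge4\pi R$ forces the other endpoint of the resulting finite bar to lie at least $2\pi R|a_{i_0}|-O(\eps)$ away. (It is precisely for this dichotomy --- an extremal-value kink of concave down type when $a_{i_0}>0$ and of concave up type when $a_{i_0}<0$, plus the auxiliary zero-value kinks used in the endpoint estimate --- that the $f_i$ are taken to be double tents.) I expect the main obstacle throughout to be exactly this endpoint estimate: showing that the bar produced by the Section \ref{RSH} theorems cannot shrink, i.e. that no action of the relevant degree lies within $2\pi R|a_{i_0}|-O(\eps)$ of the extremal action on the relevant side. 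This is where one must exploit the arithmetic of the action spectrum --- the finitely many ways the heights $2\pi Ra_j$, the radii in $(R-\eps,R]$, the integers $l$, and the $\gamma$-recappings (pinned down by $\gamma=0$ or $\gamma\ge4\pi R$) can conspire --- together with the genericity supplied by the preliminary perturbation.
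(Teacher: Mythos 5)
Your overall strategy differs from the paper's: the paper first proves the extremal case $a_k=\pm1$ (Lemma \ref{big-lemma}) by tracking one specific bar through the two homotopies $h^1_t,h^2_t$, and then deduces Theorem \ref{pen-theorem} for general $\max_i|a_i|$ by a soft interpolation argument, applying Theorem \ref{theorem-barely} to the straight-line homotopy from $\sum b_i\bar f_i$ (with $b_k=1$) to $\sum a_i\bar f_i$, which costs exactly $(1-a_k)2\pi R$. Your plan of applying Theorem \ref{cd-n+1-solo} once to the final function and then bounding the left endpoint by enumerating all degree $n$ actions does work in the symplectically aspherical case ($N=0$), and there it is genuinely shorter than the paper's homotopy argument. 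But for $N\neq0$ there are two gaps, and they occur exactly where the paper's proof of Lemma \ref{big-lemma} has to work hardest.

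First, the endpoint estimate ``every degree $n$ action below $c^{n+1}$ is at most $2\pi R+O(\eps)$'' is false once recappings are accounted for, and Theorem \ref{cd-n+1-solo} gives you no control over \emph{which} degree $n$ action is the left endpoint of the bar it produces. For example, with $\lambda>0$, $N=n$ and $\gamma=4\pi R$ (allowed by the hypothesis $4\pi R\le\gamma$, and lying in the paper's Case 1), the degree $-n$ action $-2\pi r_{j,1}+2\pi R a_j$ at the peak of the $j$-th tent (the kink with $l=1$) recaps by $k=1$ to a degree $n$ action of value $4\pi R-2\pi r_{j,1}+2\pi R a_j\approx 2\pi R(1+a_j)$, which for $0<a_j<a_{i_0}$ lies strictly inside your window $(2\pi R,\,c^{n+1})$; your claim that ``no $\gamma$-translate can cluster near $c^{n+1}\le4\pi R$'' overlooks that a translate by exactly $\gamma=4\pi R$ of a negative action lands precisely there. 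Similarly, when $\gamma=0$ but $N\neq0$, recapping changes degree without changing the value, so e.g.\ the degree $3n$ action $4\pi r_{j,2}$ from a valley kink becomes a degree $n$ action inside the window whenever $N\mid n$. If the bar ending at $c^{n+1}$ has such an interloper as its left endpoint, its length can be far less than $2\pi Ra_{i_0}$. The paper avoids needing any such global spectral gap: in Claims \ref{claim-the-first} and \ref{claim-the-second} only the actions \emph{near the tracked endpoints at each time} matter, and their monotone behavior in $t$ (plus Theorem \ref{lemma-cu}) prevents the left endpoint from jumping up; moreover in the regimes where the degree $n$/$n{+}1$ window is not clean the paper switches to degrees $-3n,-3n+1$ (Cases 3 and 5) — a choice your sketch does not reproduce. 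Second, your appeal to a generic preliminary perturbation cannot deliver the hypothesis ``no degree $n$ action equals $c^{n+1}$'' when $\gamma=0$ and $N\mid n$ (in particular $N=1$): the degree $3n$ action at the very same kink recaps, with unchanged value, to a degree $n$ action equal to $c^{n+1}$ for \emph{every} nearby function, so no perturbation separates them and Theorem \ref{cd-n+1-solo} simply does not apply. This is exactly the situation the paper flags in Case 5 of Lemma \ref{big-lemma} and resolves with the energy estimate of Lemma \ref{energy-lemma} (as in the $\lambda=0$ part of the proof of Lemma \ref{lemma-cd-n+1}), an ingredient absent from your proposal. So as written the argument establishes the theorem only for $N=0$; for the remaining monotone cases it needs either the paper's homotopy-tracking machinery or a substitute for these two steps.
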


As discussed earlier, this proves the desired inequality since the boundary depth of a Hamiltonian diffeomorphism provides a lower bound for its Hofer norm.  Theorem \ref{pen-theorem} is a direct consequence of the following (to be proven momentarily) and Theorem \ref{theorem-barely}.

\begin{lemma} \label{big-lemma}

Let $a = \{ a_i \}_{i \geq 1}$ be a sequence in $[-1,1]^{\infty}$ with $a_k = \pm 1$ for some $k$, and let $\phi$ be the Hamiltonian diffeomorphism generated by $\sum_{i = 1}^{\infty} a_i \bar{f}_i$.  Then $\beta(\phi) \geq 2 \pi R - (4 \pi + 7 )\eps$.

\end{lemma}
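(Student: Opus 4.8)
The plan is to bound $\beta(\phi)$ from below by exhibiting a single long bar in a single degree-$d$ barcode, using $\beta(\phi)=\sup_d\beta_d(\phi)$ together with the fact (recalled in Section \ref{BC}) that $\beta_d(\phi)$ is the length of the longest finite bar in the degree $d$ barcode of any generating Hamiltonian. As a preliminary reduction I would replace $\sum_i a_i\bar f_i$ by a piecewise linear $g$ within $\eps$ of it in $C^0$, with kinks at the $r_{i,j}$ and all slopes irrational multiples of $2\pi$ (so $g$ satisfies the slope condition); by the continuity statements of Sections \ref{BC} and \ref{RSH} this alters the boundary depth by at most $O(\eps)$. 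I would also assume $a_k=1$: the case $a_k=-1$ is symmetric, with concave up and concave down kinks exchanged and Theorems \ref{lemma-cd-n-1} and \ref{lemma-cu} taking over the role played below by Theorem \ref{cd-n+1-solo}.

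The bar I aim for lies in $\mathcal B^n(g)$; its intended right endpoint is the action $c^{n+1}$ that the concave down kink of $g$ at $r_{k,3}$ contributes in degree $n+1$ via item (2) of the enumeration with $l=-1$, namely $c^{n+1}=2\pi r_{k,3}+g(r_{k,3})$, where $2\pi R-O(\eps)\le g(r_{k,3})\le 2\pi R$, so that $0<c^{n+1}<4\pi R$. To apply Theorem \ref{cd-n+1-solo} I must check that in degree $n+1$ this action comes only from concave down kinks and that no degree $n$ action equals it. For the first point: a $y$-axis action cannot reach degree $n+1$ at all (a parity obstruction on the recapping), and any exterior action reaching degree $n+1$ must be recapped, contributing a value that is a multiple of $\gamma$, hence (as $\gamma\ge 4\pi R$, or $\gamma=0$) unequal to $c^{n+1}\in(0,4\pi R)$; so in degree $n+1$ only kink actions occur, and that no concave up kink action equals $c^{n+1}$ is arranged by a generic choice of the $\bar f_i$, or — exactly as Theorem \ref{cd-n+1-solo} is deduced from Lemma \ref{lemma-cd-n+1} — by perturbing $g$ to a nearby piecewise linear function with distinct kink actions and invoking barcode continuity. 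The second point follows from the same enumeration, $\gamma\ge 4\pi R$ keeping recapped contributions away from $c^{n+1}$. Theorem \ref{cd-n+1-solo} then yields a bar $[c_*,c^{n+1})\in\mathcal B^n(g)$, so $\beta_n(\phi)\ge c^{n+1}-c_*$ with $c_*$ a degree $n$ action strictly below $c^{n+1}$.

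The decisive step is to show $c_*\le 2\pi R+O(\eps)$. For this I would enumerate all degree $n$ actions of $g$: the trivially capped ones are the kink heights $g(r_i)\le 2\pi R$ (item (1), $l=0$), the ``midpoint'' actions $2\pi r_i+g(r_i)$ from concave up kinks, where $g(r_i)\le 0$ (item (1), $l=-1$), hence all $\le 2\pi R$, and the exterior maximum action $f(R)=0$; a suitable tiny $y$-axis slope leaves no others, and all of these are $\le 2\pi R+O(\eps)$. Every other degree $n$ action is obtained by adding a nonzero multiple of $\gamma$ to a base action, and here I would use $\gamma\ge 4\pi R$ together with $r_i>R-\eps$ to verify — splitting into the cases $N=0$, $N\ne 0$ with $\gamma\ne 0$, and $N\ne 0$ with $\gamma=0$, just as in the proof of Lemma \ref{lemma-cd-n+1} — that each such recapped action either exceeds $c^{n+1}$ or is again $\le 2\pi R+O(\eps)$. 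Since $c_*<c^{n+1}$, it must lie in the latter class, so
\[
\beta(\phi)\ge\beta_n(\phi)\ge c^{n+1}-c_*\ge\big(2\pi r_{k,3}+g(r_{k,3})\big)-\big(2\pi R+O(\eps)\big)\ge 2\pi R-O(\eps),
\]
using $r_{k,3}>R-\eps$ and $g(r_{k,3})\ge 2\pi R-O(\eps)$; bookkeeping the error from the reduction to $g$, from the genericity perturbation, from the slack in the recapping estimates, and from these last two bounds produces the constant $(4\pi+7)\eps$.

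The main obstacle — and where essentially all the work lies — is this exhaustive enumeration of degree $n$ actions and the check that no recapped action can slip into the window $\big(2\pi R+O(\eps),\,c^{n+1}\big)$; this is precisely where the standing hypothesis $4\pi R\le\gamma$ is indispensable, and where the three-way case analysis on $(N,\gamma)$ mirrors the one carried out in Lemma \ref{lemma-cd-n+1}.
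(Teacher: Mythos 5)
Your overall frame (reduce to a piecewise linear $g$ close to $\sum_i a_i f_i$, exhibit one long bar in a single barcode, conclude via the boundary depth and its continuity) matches the paper, but the step you yourself call decisive --- that every degree $n$ action of $g$ is either at most $2\pi R + O(\eps)$ or at least $c^{n+1}$ --- is false under the standing hypotheses, and this is a genuine gap rather than bookkeeping. The recapped degree $n$ actions are not obtained by adding multiples of $\gamma$ to the trivially capped degree $n$ actions you list; they come from orbits of \emph{other} degrees at the steep kinks. Concretely, at the concave down kink $(r_{k,3}, 2\pi R)$ both adjacent slopes of $g$ have absolute value of order $2\pi R\, 2^{k}/\eps$, so the integers $l$ with $2\pi l$ between them range over a huge interval of both signs, and solving $-2ln + n + 2Nk = n$ produces degree $n$ actions of the form $2\pi R + \tfrac{2z}{D}\bigl(n\gamma - 2\pi N r_{k,3}\bigr)$ for all $z$ in a correspondingly huge range (compare (A2), where only $z>0$ occurs because there the slope on the right of $r_3$ is small). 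Whenever $\tfrac{2}{D}(n\gamma - 2\pi N r_{k,3})$ is smaller than the window width $\approx 2\pi R$, members of this progression land strictly inside $\bigl(2\pi R + O(\eps),\, c^{n+1}\bigr)$: for instance on $(S^2,\omega)$ with $4\pi R \le \gamma < 6\pi R$ (so $N = 2n$, the paper's flagship example) one gets degree $n$ actions near $3\pi R$; and if $\gamma = 0$ with $N \mid n$, the kink at $r_{k,1}$ itself contributes the degree $n$ action $2\pi r_{k,1} + 2\pi R$, just below $c^{n+1}$, while any other kink of height $2\pi a_j R$ with intermediate $a_j$ contributes $2\pi r_j + 2\pi a_j R$ in the middle of the window. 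Consequently Theorem \ref{cd-n+1-solo} only guarantees \emph{some} bar of $\B^n(g)$ with right endpoint $c^{n+1}$; for all your argument shows, its left endpoint could be one of these intermediate actions, so the bar could have length $O(\eps)$ and you get no useful bound on $\beta_n(\phi)$. Genericity does not help, since the problem is actions lying strictly inside the open window, not coincidences of actions.

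This is exactly why the paper does not argue statically on $g$: it introduces the plateau functions $g_0, g_1$, applies Theorem \ref{cd-n+1-solo} only at the start of the homotopy $h^1_t$, where the chosen degree $n+1$ action is momentarily isolated among degree $n$ and $n+1$ actions, and then propagates that one distinguished bar through $h^1_t$ and $h^2_t$ via the matching arguments of Claims \ref{claim-the-first} and \ref{claim-the-second}, Lemma \ref{lemma-very-useful}, Theorem \ref{lemma-cu}, the bound $4\pi R \le \gamma$ on exterior degree $n+1$ actions, and, when $\lambda = 0$, the energy estimate of Lemma \ref{energy-lemma}. A persistence argument of this kind is insensitive to other degree $n$ actions sitting strictly inside the bar, which is precisely the configuration your enumeration cannot exclude; moreover the paper must switch to degree $-3n$ in the regimes $n\gamma - 2\pi N R < 0$ and $\lambda = 0$, further evidence that no single static enumeration in degree $n$ covers all allowed cases. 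To salvage your route you would have to show that none of the intermediate degree $n$ actions can actually pair with $c^{n+1}$, and that is essentially the content of the paper's homotopy and energy arguments.
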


\begin{proof}[Proof of Theorem \ref{pen-theorem}, assuming Lemma \ref{big-lemma}]

Let $\phi$ be generated by $\sum_{i=1}^{\infty}a_i \bar{f}_i$ and let $a_k$ be such that $|a_k| = \text{max}_{i \geq 1}(|a_i|)$.  We may assume that $a_k$ is positive by the following:  For a Hamiltonian diffeomorphism $\phi$, $\beta(\phi) = \beta(\phi^{-1})$ (see \cite{MikeSolo}), and if $\phi$ is generated by autonomous $g$, its inverse is generated by $-g$ as already discussed.  Hence we may replace each $a_i$ with $-a_i$ while leaving the boundary depth unchanged.

Define $b \in [-1,1]^{\infty}$ by setting $b_i = a_i$ for $i \neq k$ and $b_k = 1$, and let $\phi '$ be the Hamiltonian diffeomorphism induced by $\sum_{i = 1}^{\infty} b_i \bar{f}_i$.  By Lemma \ref{big-lemma}, $\beta(\phi ') \geq 2 \pi R - (4 \pi + 7)\eps$.  Let $h_t$ be the straight-line homotopy between the functions $\sum_{i=1}^{\infty} b_i \bar{f}_i$ and $\sum_{i=1}^{\infty} a_i \bar{f}_i$, and let $\phi_t$ be the induced path of diffeomorphisms with $\phi_0 = \phi'$ and $\phi_1 = \phi$.  Theorem \ref{theorem-barely} then tells us that
\[
|\beta(\phi') - \beta(\phi_t)| \leq \underset{[0,R]}{\text{max}} \left( \left( \sum_{i=1}^{\infty}  b_i f_i  \right) - h_t \right) - \underset{[0,R]}{\text{min}} \left( \left( \sum_{i=1}^{\infty} b_i f_i \right) - h_t \right)
\]
(compare the above upper bound to the upper bound from Theorem \ref{theorem-barely}).  

The function $\left( \sum_{i=1}^{\infty} b_i \bar{f}_i \right) - h_t = [t(1 - a_k)]\bar{f}_k$ has maximum less than $[t(1-a_k)]2\pi R$ and minimum zero, so the above inequality becomes
\[
|\beta(\phi') - \beta(\phi_t)| \leq [t(1-a_k)]2\pi R,
\]
leading us to
\[
\beta(\phi') - [t(1-a_k)]2\pi R \leq \beta(\phi_t).
\]
Taking $t = 1$ and using that $\beta(\phi ') \geq 2 \pi R - (4 \pi + 7 )\eps$ finishes the proof.

\end{proof}

The rest of this section is devoted to proving Lemma \ref{big-lemma}.

\begin{proof}

Our goal is to eventually find a bar of the appropriate length in a degree $d$ barcode of some function which is $C^0$-close to $\sum_{i = 1}^{\infty} a_i \bar{f}_i$.  From this, we get a lower bound on our boundary depth and our lemma is proved.  We work out the case where $M$ is monotone with $\lambda > 0$, $n \gamma - 2 \pi N R \geq 0$, and $N \neq 0$ in detail, while a brief discussion of the (slight) modifications necessary for the remaining cases is reserved for the end of the proof.

\vspace{0.5cm}

\textbf{Case 1}  $N \neq 0, \lambda>0,$ and $n \gamma - 2 \pi N R \geq 0$.

Instead of working directly with the function $\sum_{i = 1}^{\infty} a_i \bar{f}_i$, we first pass to its piecewise linear counterpart $\sum_{i = 1}^{\infty} a_i f_i$, after which we will pass to a piecewise linear function $g$ which satisfies our slope condition.

Let $\sum_{i}^{\infty} a_i f_i$ be given with $a$ satisfying our hypothesis.  Assume without loss of generality as in the proof of Theorem \ref{pen-theorem} that $a_k = 1$, and let $r_1 = r_{k,1}$, $r_2 = r_{k,2}$, and $r_3 = r_{k,3}$ be the $r$-values near the center of $f_k$'s support where $f_k$ has kinks (as labeled in Figure \ref*{sample_two}).  We may $C^0$ perturb our graph $\sum_{i}^{\infty} a_i f_i$ by less than $\eps$ to a new piecewise linear function $g$ which has kinks at precisely the same values of $r$ as $\sum_{i}^{\infty} a_i f_i$, satisfies our slope condition, and leaves the points $(r_{\alpha}, f_k(r_{\alpha})), \alpha = 1, 2, 3$ unchanged.  For convenience, we further assume that the the slopes $m_0, m_1$ of the line coming out of the $y$-axis and of the line going into the line $r = R$ are, respectively, negative and positive.  We also assume that $0 < g(R)$.

Define functions $g_0$ and $g_1$ by

\begin{displaymath}
   g_0(r) = \left\{
     \begin{array}{lr}
       m_0 (r-r_2), & 0 \leq r \leq r_2\\
       f_k(r), & r_2 \leq r \leq r_3\\
       m_1 (r - r_3) + 2\pi R , & r_3 \leq r \leq R  
     \end{array}
   \right.
\end{displaymath}

and

\begin{displaymath}
   g_1(r) = \left\{
     \begin{array}{lr}
     m_0(r-r_1) + 2\pi R, & 0 \leq r \leq r_1\\
     f_k(r), & r_1 \leq r \leq r_3\\
       m_1 (r - r_3) + 2\pi R, & r_3 \leq r \leq R  .
     \end{array}
   \right.
\end{displaymath}

The graphs of these functions are displayed in the first and third graphs of Figure \ref*{first_homotopy}.  The function $g_0$ is a $C^0$ approximation of a function which starts off as a constant $0$, then exhibits the rapidly increasing behavior of $f_k$ right after the midpoint of its support, then becomes a constant $2 \pi R$ for the rest of our interval. The function $g_1$ is a $C^0$ approximation of a similar function which exhibits the interesting behavior of $f_k$ on $[r_1, r_3]$ instead.

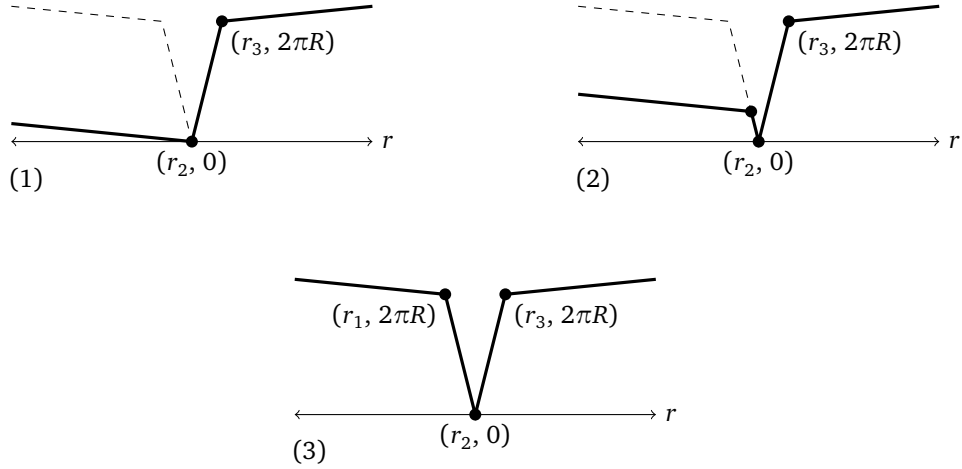
\begin{figure}
\vspace{0.5cm}
\begin{tikzpicture}[scale=.4]
		\node[] at (-5.5, -1.25) {$(1)$};
		\draw[<->] (-6,0) -- (6,0) node[right] {$r$};
		\draw[very thick] (-6,0.6) -- (0,0) -- (1,4) -- (6,4.5);
		\draw[dashed] (-6, 4.5) -- (-1,4) -- (0, 0);
		\node[fill = black, shape = circle, scale = .5] at (1,4) {};
		\node[below right] at (1,4) {$(r_{3}, \, 2\pi R)$};
		\node[fill = black, shape = circle, scale = .5] at (0,0) {};
		\node[below] at (0,0) {$(r_{2}, \, 0)$};
\end{tikzpicture}
\hspace{2cm}
\begin{tikzpicture}[scale=.4]
		\node[] at (-5.5, -1.25) {$(2)$};
		\draw[<->] (-6,0) -- (6,0) node[right] {$r$};
		\draw[very thick] (-6,1.575) -- (-0.25,1) -- (0,0) -- (1,4) -- (6,4.5);
        \draw[dashed] (-6, 4.5) -- (-1,4) -- (-0.25, 1);
        \node[fill = black, shape = circle, scale = .5] at (1,4) {};
		\node[below right] at (1,4) {$(r_{3}, \, 2\pi R)$};
		\node[fill = black, shape = circle, scale = .5] at (0,0) {};
		\node[below] at (0,0) {$(r_{2}, \, 0)$};
		\node[fill = black, shape = circle, scale = .5] at (-0.25, 1) {};
\end{tikzpicture}

\vspace{1cm}
\begin{tikzpicture}[scale=.4]
		\node[] at (-5.5, -1.25) {$(3)$};
		\draw[<->] (-6,0) -- (6,0) node[right] {$r$};
		\draw[very thick] (-6,4.5) -- (-1,4) -- (0,0) -- (1,4) -- (6,4.5);
		\node[fill = black, shape = circle, scale = .5] at (-1,4) {};
		\node[below left] at (-1,4) {$(r_{1}, \, 2\pi R)$};
		\node[fill = black, shape = circle, scale = .5] at (1,4) {};
		\node[below right] at (1,4) {$(r_{3}, \, 2\pi R)$};
		\node[fill = black, shape = circle, scale = .5] at (0,0) {};
		\node[below] at (0,0) {$(r_{2}, \, 0)$};
		
\end{tikzpicture}

\caption{The homotopy $h^1_t$.  The solid graphs in the first and third pictures are $g_0$ and $g_1$, respectively.  The $r$-coordinate of the leftmost kink in the second picture is $r(t)$.}

\label{first_homotopy}

\end{figure}

Let $r(t) = (1-t)r_2+tr_1$.  We connect $g_0$ to $g_1$ via the following homotopy:

\begin{displaymath}
   h^1_t(r) = \left\{
     \begin{array}{lr}
      m_0 (r-r(t)) + 2 \pi R t, & 0 \leq r \leq r(t)\\
     g_1(r), & r(t) \leq r \leq R  .
     \end{array}
   \right.
\end{displaymath}

Notice that the number of kinks in the graph of $h^1_t$ stays the same once the homotopy starts.  Moreover, the slopes around each kink are the same throughout the homotopy, implying that we may parametrize the actions of the $h^1_t$ as functions of time.

First, we give an explicit parametrization of the degree $n+1$ actions which can occur at $r_3$.  Since $h^1_t$ is concave down at $r_3$, we know that the possible degrees occurring here are of the form $-2ln + n + k 2N$ or $-2ln -n +1 + k2N$.  Only the latter of these expressions has values $l$ and $k$ which give it a degree of $n+1$, leading us to focus only on solutions to the equation
\[
-2ln -n + 1 + k2N = n+1
\]
or
\[
2n(-l) + k2N = 2n.
\]
Letting $D$ represent the greatest common divisor of $2n$ and $2N$, we may therefore parameterize our solutions $-l$ and $k$ to the above equation as

\begin{equation} \label{OK-THEN}
-l = 1 - \tfrac{2N}{D}z, \hspace{1cm} k = \tfrac{2n}{D}z \tag{*}
\end{equation}
where $z$ is an integer.  Using our enumeration of actions from Section \ref*{RSH} in the case that $\lambda > 0$, we conclude that any such action has the form
\[
2 \pi \big( 1 - \tfrac{2N}{D}z \big) r_3 + 2 \pi R +  \tfrac{2n}{D} z \gamma;
\]
setting $r_3 = R - \delta_3$ for some $\delta_3 > 0$ and simplifying the above expression yields

\vspace{0.25cm}

\begin{enumerate}
\item[(A1)]  $4 \pi R -2 \pi \delta_3 + \tfrac{2}{D}z(n\gamma - 2 \pi N (R- \delta_3))$
\end{enumerate}

\vspace{0.25cm}

We know that $n\gamma - 2 \pi N(R - \delta_3) > 0$ since $n\gamma - 2 \pi N R \geq 0$, and since $-l < 0$ at $(r_3, f_k(r_3))$, we must have $z > 0$ in (\ref{OK-THEN}) and hence in (A1).  Meanwhile, the inequality $\tfrac{2}{D}(n \gamma - 2 \pi N R) \geq 2 \pi(1 - \tfrac{2N}{D})\delta_3$ implies $\tfrac{2}{D}(n\gamma - 2 \pi N (R - \delta_3)) \geq 2 \pi \delta_3$.  This discussion allows us to conclude that any degree $n+1$ action coming from $(r_3, f_k(r_3))$ is at least as big as $4 \pi R$.

A similar analysis gives that 

\vspace{0.25cm}

\begin{enumerate}
\item[(A2)]  any degree $n$ action coming from $r_3$ will be of the form
\[
2 \pi R + \tfrac{2}{D}z(n\gamma - 2 \pi N r_3),
\]
with $z > 0$.  Hence, all such actions are strictly greater than $2 \pi R$.
\end{enumerate}

\vspace{0.25cm}

Next, we parameterize the relevant actions at $r_2$ and $r(t)$.  Again using our enumeration of actions from Section \ref*{RSH} and calculations similar to those above, we conclude the following.

\begin{enumerate}

\item[(A3)]  Any degree $n+1$ action at time $t$ occurring at $r(t)$ has the form
\[
2 \pi r(t) + 2 \pi Rt + \tfrac{2}{D}z(n \gamma - 2 \pi N r(t)).
\]

\item[(A4)]  Any degree $n$ action at time $t$ occurring at $r(t)$ has the form
\[
2 \pi Rt + \tfrac{2}{D}z(n\gamma - 2 \pi N r(t)). 
\]
We must have $z < 0$.  Using reasoning similar to the case of (A1), we may say that all actions here are no more than $2 \pi r_2$ for all $t \in (0,1]$.

\item[(A5)]  Any degree $n$ action at time $t$ occurring at $r_2$ has the form
\[
2 \pi r_2 + \tfrac{2}{D}z (n \gamma - 2 \pi N r_2).  
\]

\item[(A6)]  We have a degree $n$ action coming from the $y$-intercept of the form $2 \pi Rt - m_0 r(t)$.  By adjusting $m_0$ if necessary, we may assume that $0 < -m_0 r(1) < \eps$.

\item[(A7)]  Any exterior degree $n$ actions will be at least as big as $2 \pi R$.

\item[(A8)]  Any exterior degree $n+1$ actions will be at least as big as $2 \pi R + \gamma \geq 6 \pi R$.

\end{enumerate}

With this new enumeration of actions out of the way, we may continue with our proof.  Consider the degree $n+1$ action $2 \pi r(t) + 2 \pi Rt$ from (A3) with $z=0$, which is easily verified to be a possible value of $z$ if $|m_0|$ was chosen small enough.  This action does not equal any of the previously calculated degree $n+1$ actions for all $t>0$, so if there exists a time $t$ for which another degree $n+1$ action equals our chosen one, it must come from the concave up kink occurring at $r_2$.  There are only finitely many of these.  Similarly, we see that there are only finitely many times where our degree $n+1$ action can equal any degree $n$ action.  Hence, we can find an interval of time right after $t=0$ in which this action is unique among all degree $n+1$ and $n$ actions for $h^1_t$.  Apply Theorem \ref{cd-n+1-solo} to conclude that our degree $n+1$ action must appear in $\B^{n}(h^1_t)$ for this small interval of time.

Furthermore, note that if our degree $n+1$ action limits on a degree $n+1$ action from $r_2$ as $t$ goes to $0$ (implying that said action from $r_2$ has to be $2 \pi r_2$), then we must have $2 \pi (- l) r_2 + k \gamma = 2 \pi r_2$, or equivalently, $k \gamma = 2 \pi (l+1)r_2$.  If $k \neq 0$, we may break this equality by slightly shrinking $\eps$ and thus changing our value of $r_2$.  On the other hand, if $k = 0$, then $l$ would have to be $-1$; again assuming that $|m_0|$ was chosen small enough, this gives an impossible value of $l$ at $r_2$ for $t = 0$.  We are therefore justified in assuming that $2 \pi r_2$ is not a degree $n+1$ action for $h^1_0$.  This and the continuity in $t$ of $\B^{n}(h^1_t)$ imply that our degree $n+1$ action must pair with a degree $n$ action $c^{n}_t$ which is close to it for our previously chosen small interval of time.  In particular, $c^{n}_t$ must satisfy $2 \pi r(t) + 2 \pi Rt - c^{n}_t \rightarrow 0$ as $t \rightarrow 0$, and of the degree $n$ actions enumerated above, the only one to do this is the one with $z=0$ from (A5).

In fact, we claim the following:

\begin{claim}\label{claim-the-first}
Such a pairing persists until such time $\bar{t}$ that the degree $n$ action coming from the $y$-axis, $2\pi R \bar{t} -m_0r(\bar{t})$, equals our chosen degree $n$ action coming from $f_k$'s kink at $r_2$.

\end{claim}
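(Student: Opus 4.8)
The plan is to run an infimum/continuity argument in the spirit of the proof of Lemma~\ref{lemma-cd-n+1}. The structural feature to exploit is that along the homotopy $h^1_t$ the slopes around every kink — in particular $m_0$, the slope out of the $y$-axis — are held fixed, so that $h^1_t$ satisfies the slope condition for \emph{all} $t$, the degree of every action of $h^1_t$ is constant in $t$, and each action varies continuously with $t$. Write $c^{n+1}_t = 2\pi r(t) + 2\pi R t$ for the distinguished degree $n+1$ action from (A3) with $z=0$; it is continuous and strictly increasing, with $c^{n+1}_0 = 2\pi r_2$ and $c^{n+1}_t - 2\pi r_2 = 2\pi t(r_1 - r_2 + R)$, which is bounded away from $0$ once $t$ is bounded away from $0$, and the pairing with $2\pi r_2$ has already been established on a small interval to the right of $t=0$.

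First I would record the finitely many ``bad'' times $\mathcal E\subset(0,1)$: by the enumeration (A1)--(A8) and the strict monotonicity in $t$ of the relevant action families, $c^{n+1}_t$ coincides with a degree $n+1$ action other than itself only when it hits one of the (finitely many) concave-up-kink actions at $r_2$, and it coincides with a degree $n$ action only finitely often. For $t\notin\mathcal E$, $c^{n+1}_t$ has multiplicity one among all degree $n+1$ actions of $h^1_t$ (it lies strictly below $4\pi R$ and below the exterior actions, and is separated from the other (A3) values by at least $\tfrac{4\pi N}{D}(R-r_2)>0$), it enters degree $n+1$ only through the concave down kink at $r(t)$, and no degree $n$ action equals it — so Theorem~\ref{cd-n+1-solo} makes $c^{n+1}_t$ the right endpoint of a \emph{unique} bar $[c^n_t,c^{n+1}_t)\in\B^n(h^1_t)$, and by Theorem~\ref{BarCont} the assignment $t\mapsto c^n_t$ is continuous between consecutive bad times. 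Next I would pin the left endpoint: from (A1)--(A8) one checks that the only degree $n$ action of $h^1_t$ ever equal to $2\pi r_2$ for $t\in(0,\bar t)$ is $2\pi r_2$ itself — exterior actions (A7) and actions from $r_3$ (A2) exceed $2\pi R$, the (A4) actions and the nonzero recappings in (A5) stay at distance $\ge\tfrac{4\pi N}{D}(R-r_2)>0$, and the $y$-axis action $2\pi Rt - m_0 r(t)$ is strictly increasing and first reaches $2\pi r_2$ precisely at $\bar t$ — so for each fixed $t\in(0,\bar t)$ the value $2\pi r_2$ is separated from every other degree $n$ action of $h^1_t$ by a positive gap. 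Together with the base case near $t=0$ and the continuity of $c^n_t$, this shows the bar's left endpoint cannot leave $2\pi r_2$ except possibly while crossing a time of $\mathcal E$.

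Now put $t_0'=\inf\{\,t\in(0,\bar t):[2\pi r_2,c^{n+1}_t)\notin\B^n(h^1_t)\,\}$ and suppose, for contradiction, $t_0'<\bar t$; then $[2\pi r_2,c^{n+1}_t)\in\B^n(h^1_t)$ for all $t<t_0'$, and this bar has length $\gtrsim t_0'$, bounded away from $0$. If $t_0'\notin\mathcal E$, then at $t_0'$ both $2\pi r_2$ (among degree $n$ actions) and $c^{n+1}_{t_0'}$ (among degree $n+1$ actions) are isolated, so continuity of barcodes transports the bar to $[2\pi r_2,c^{n+1}_{t_0'})\in\B^n(h^1_{t_0'})$, and the same isolation for $t$ slightly larger than $t_0'$ keeps $[2\pi r_2,c^{n+1}_t)$ in $\B^n(h^1_t)$ throughout a full neighborhood of $t_0'$ — contradicting the definition of $t_0'$. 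If $t_0'\in\mathcal E$, pick $t_-<t_0'<t_+$ sufficiently close to $t_0'$ (and avoiding the finitely many times where $2\pi r_2$ is hit by some other action of $h^1_t$) and apply Lemma~\ref{lemma-very-useful} with $a=2\pi r_2$ and window width $\eps'$ small relative to the fixed positive gaps around $2\pi r_2$ at time $t_0'$ and to $c^{n+1}_{t_0'}-2\pi r_2$: for $h^1_{t_-}$ the value $2\pi r_2$ is the only degree $n$ action in $I_{\eps'}(2\pi r_2)$ and it pairs, via $[2\pi r_2,c^{n+1}_{t_-})$, with a degree $n+1$ action outside the window, so the lemma forces the same count for $h^1_{t_+}$; running that bar through the $\eps'$-matching, and noting that at $t_+$ the image must be $[2\pi r_2,c^{n+1}_{t_+})$ because $c^{n+1}_{t_+}$ is the only degree $n+1$ barcode endpoint of $\B^n(h^1_{t_+})$ near $c^{n+1}_{t_0'}$ (the partner that collided with $c^{n+1}_t$ at $t_0'$ is either a degree $n$ action $\ne2\pi r_2$, hence never a right endpoint, or a concave-up-kink action at $r_2$, which does not enter $\B^n$ by Theorem~\ref{lemma-cu}), yields $[2\pi r_2,c^{n+1}_{t_+})\in\B^n(h^1_{t_+})$ — again contradicting the definition of $t_0'$. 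Hence $t_0'=\bar t$, which is the claim.

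I expect the main obstacle to be the case $t_0'\in\mathcal E$: one must guarantee that as $c^{n+1}_t$ sweeps past a concave-up-kink action at $r_2$, or past some other degree $n$ action, the left endpoint of the bar cannot be re-routed off $2\pi r_2$. This rests on the precise degree bookkeeping of Section~\ref{RSH} — in particular Theorems~\ref{cd-n+1-solo} and~\ref{lemma-cu}, together with the fact that any degree $n$ action colliding with $c^{n+1}_t$ must, for $t>0$, lie strictly above $2\pi r_2$ and so be ``downstream'' of our bar — exactly as the parity/energy arguments do in the proof of Lemma~\ref{lemma-cd-n+1}. A secondary technicality is that several of the separation gaps used above shrink to $0$ as $t_0'\to\bar t$, so $\eps'$ and the required closeness of $t_\pm$ must be chosen in terms of the particular $t_0'$.
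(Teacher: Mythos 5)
Your overall strategy is the paper's: the paper proves the claim by showing that the set $T=\{t\in(0,\bar t)\mid[2\pi r_2,\,2\pi r(t)+2\pi Rt)\in\B^n(h^1_t)\}$ is nonempty, open and closed in $(0,\bar t)$, using exactly the ingredients you invoke --- $\eps'$-matchings on small time intervals, the isolation of $2\pi r_2$ among degree $n$ actions (available precisely because $t\neq\bar t$, with $\eps'$ chosen depending on the time, as you note), and Theorem~\ref{lemma-cu} to exclude the stationary concave-up-kink actions at $r_2$ from ever serving as right endpoints in $\B^n$. Your infimum-of-bad-times formulation, with the finite exceptional set $\mathcal E$, is this same argument in different packaging, and your treatment of generic times and of the identification of the matched bar at $t_+$ is sound.

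There is, however, a genuine gap in your exceptional case. When $t_0'\in\mathcal E$, establishing $[2\pi r_2,c^{n+1}_{t_+})\in\B^n(h^1_{t_+})$ for all $t_+$ slightly larger than $t_0'$ does \emph{not} contradict the definition of $t_0'$: the set of bad times could contain $t_0'$ itself (the bar failing exactly at the collision time), in which case its infimum is still $t_0'$ and no contradiction results; it also leaves the claim unproved at the times of $\mathcal E$, whereas the statement asserts persistence at every time up to $\bar t$. What is missing is precisely the paper's closedness step: for $t\nearrow t_0'$ the bars $[2\pi r_2,c^{n+1}_t)$ have length bounded away from zero, so bottleneck continuity, together with the fact that nearby endpoints must be among the finitely many action values close to $2\pi r_2$ and to $c^{n+1}_{t_0'}$, forces a bar $[2\pi r_2,c^{n+1}_{t_0'})$ to lie in $\B^n(h^1_{t_0'})$ as well; note that if the collision at $t_0'$ is a numerical coincidence of $c^{n+1}_{t_0'}$ with another action, the interval itself is unchanged, so this causes no harm. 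With that step added, both of your cases genuinely contradict the minimality of $t_0'$ and the bar persists at all $t\in(0,\bar t)$; the paper then runs the same matching argument once more to obtain the bar at $t=\bar t$ itself, which your write-up stops just short of and which the statement of the claim requires.
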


\begin{proof}[Proof of Claim \ref{claim-the-first}]

The idea behind this proof is the following:  Our degree $n$ action will remain stationary for all time, so the only way that the left-hand endpoint of our bar can change is if some degree $n$ action which changes with time eventually equals our chosen one.  However, the only one which can do this occurs at time $\bar{t}$ and is given by the degree $n$ action coming from the $y$-intercept.  On the other hand, our degree $n+1$ action increases with time, so our bar grows until possibly when our degree $n+1$ action equals another; see Figure \ref*{cannot_happen} for a seemingly possible, and troublesome, depiction of how $\B^n(h^1_t)$ changes with time.  But as we shall see, any other degree $n+1$ actions which can equal our chosen one can only come from a concave up kink, which by Theorem \ref{lemma-cu} cannot enter into $\B^{n}(h^1_t)$.  Hence, the scenario depicted in Figure \ref*{cannot_happen} cannot occur.

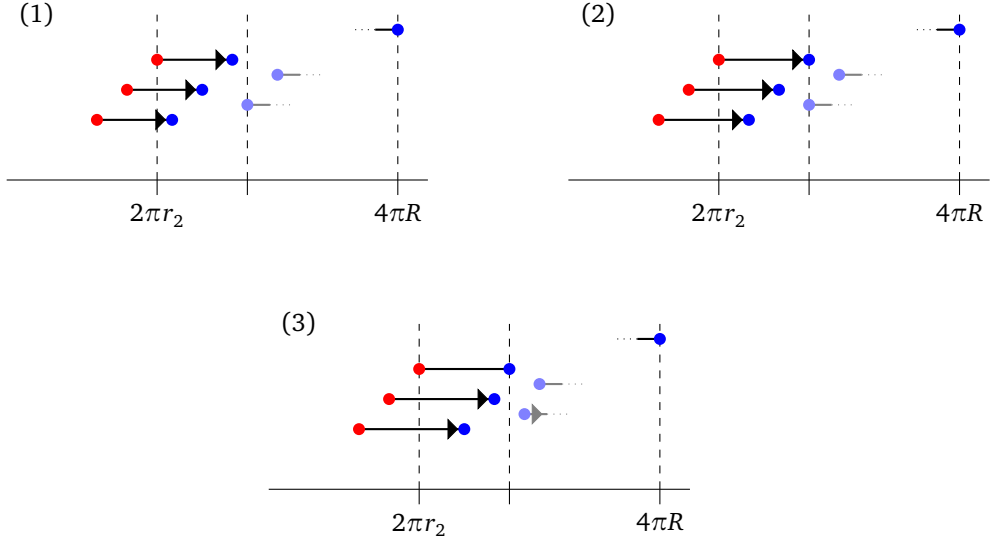
\begin{figure}[ht]
\vspace{0.5cm}
\hspace{3.5cm}
\begin{tikzpicture}[scale=.4]
		\node[] at (-9, 5.5) {$(1)$};
		\VertDash{3}{$4 \pi R$};
		\VertDash{-5}{$2 \pi r_2$};
		\VertDash{-2}{};
		\draw[-] (-10, 0) -- (4, 0);
		\GrowBars{-7}{2}{2.5}{red}{blue};
		\GrowBars{-6}{3}{2.5}{red}{blue};
		\GrowBars{-5}{4}{2.5}{red}{blue};
		\ParBarsLight{-2}{2.5}{blue}{+};
		\ParBarsLight{-1}{3.5}{blue}{+};
		\ParBars{3}{5}{blue}{-};
\end{tikzpicture}
\hspace{6cm}
\begin{tikzpicture}[scale=.4]
		\node[] at (-9, 5.5) {$(2)$};
		\VertDash{3}{$4 \pi R$};
		\VertDash{-5}{$2 \pi r_2$};
		\VertDash{-2}{};
		\draw[-] (-10, 0) -- (4, 0);
		\GrowBars{-7}{2}{3}{red}{blue};
		\GrowBars{-6}{3}{3}{red}{blue};
		\GrowBars{-5}{4}{3}{red}{blue};
		\ParBarsLight{-2}{2.5}{blue}{+};
		\ParBarsLight{-1}{3.5}{blue}{+};
		\ParBars{3}{5}{blue}{-};
\end{tikzpicture}

\vspace{2cm}

\hspace{3cm}
\begin{tikzpicture}[scale=.4]
		\node[] at (-9, 5.5) {$(3)$};
		\VertDash{3}{$4 \pi R$};
		\VertDash{-5}{$2 \pi r_2$};
		\VertDash{-2}{};
		\draw[-] (-10, 0) -- (4, 0);
		\GrowBars{-7}{2}{3.5}{red}{blue};
		\GrowBars{-6}{3}{3.5}{red}{blue};
		\StatBars{-5}{4}{3}{red}{blue};
		\ParBarsLight{-1.5}{2.5}{blue}{+};
		\draw[-triangle 90, black!50] (-1.15, 2.5)-- +(0.25,0);
		\ParBarsLight{-1}{3.5}{blue}{+};
		\ParBars{3}{5}{blue}{-};
\end{tikzpicture}

\vspace{0.5cm}

\caption{A troublesome evolution of $B^n(h^1_t)$, with our bar of choice being the one with left-hand endpoint at $2 \pi r_2$.  In the second picture, our preferred bar has its increasing, degree $n+1$ action switch with a stationary degree $n+1$ action.  This keeps our bar from growing, as depicted in the third picture.  Theorem \ref{lemma-cu}, however, assures us that this cannot happen.}

\label{cannot_happen}

\end{figure}

We will first show that the set
\[
T = \{ t \in (0, \bar{t}) \, | \, [2 \pi r_2, 2 \pi r(t) + 2 \pi Rt) \in \B^n(h^1_t) \}
\]
is open and closed in $(0, \bar{t})$ and so must be equal to $(0, \bar{t})$ since, as we have already seen, $T$ is non-empty.  If $t_0 \in T$ with $t_0 \neq \bar{t}$, we assert the existence of a time interval $(t_{-1}, t_1)$ and an $\eps' >0$ so that for all times $t \in (t_{-1}, t_1)$

\begin{itemize}
\item $(2 \pi r(t_0) + 2 \pi Rt_0) - 2 \pi r_2 > 2 \eps '$.

\item our degree $n+1$ action is at least $\eps'$ away from all other $n+1$ actions for $h^1_{t}$ \textit{except} possibly for constant degree $n+1$ actions of the form $2 \pi r(t_0) + 2 \pi Rt_0$.  In particular, the only other possible degree $n+1$ actions lying in the interval $(2 \pi r(t_0) + 2 \pi Rt_0 - \eps', 2 \pi r(t_0) + 2 \pi Rt_0 + \eps')$ come from the concave up kink of $h^1_{t}$.

\item  our degree $n$ action is at least $\eps'$ away from all other degree $n$ actions of $h^1_{t}$.

\item $||h^1_{t} - h^1_{t_0} ||_{L^{\infty}} < \eps'$.

\end{itemize}

Our third condition may be met since $t_0 \neq \bar{t}$.  So choose $t' \in (t_{-1}, t_1)$.  We know that there should exist an $\eps'$-matching $\mu_{\eps'}$ between $\B^n(h^1_{t_0})$ and $\B^n(h^1_{t'})$.  Our first condition above tells us that $[2 \pi r_2, 2 \pi r(t_0) + 2 \pi Rt_0) \in \B^n(h^1_{t_0})$ is in the domain of $\mu_{\eps'}$.  Furthermore, $\mu_{\eps'}$ should match our degree $n+1$ action $2 \pi r(t_0) + 2 \pi Rt_0$ at time $t_0$ with a degree $n+1$ action which is in $(2 \pi r(t_0) + 2 \pi Rt_0 - \eps', 2 \pi r(t_0) + 2 \pi Rt_0 + \eps ')$, and the only such degree $n+1$ actions at time $t'$ are $2 \pi r(t') + 2 \pi Rt'$ and $2 \pi r(t_0) + 2 \pi Rt_0$.  The latter action, however, can only come from our concave up kink in $h^1_{t'}$'s graph and so cannot enter into $\B^n(h^1_{t'})$ by Theorem \ref{lemma-cu}.  Hence, our degree $n+1$ action $2 \pi r(t_0) + 2 \pi Rt_0$ at time $t_0$ must be matched with the degree $n+1$ action $2 \pi r(t') + 2 \pi Rt'$ at time $t'$.  Similar reasoning shows that our chosen degree $n$ action must be matched with itself between times $t_0$ and $t'$.  Hence, a bar of the form $[2 \pi r_2, 2 \pi r(t') + 2 \pi Rt')$ exists in $\B^n(h^1_{t'})$ for all times $t' \in (t_{-1}, t_1)$, showing that $T$ is an open subset $(0, \bar{t})$.

The set $T$ is closed for a simpler reason:  If $t_0 \in (0, \bar{t})$ is a limit point of $T$, then there are times $t$ immediately prior to (or after) $t_0$ for which a bar of the appropriate form exists in $\B^n(h^1_t)$.  With the lengths of these bars not limiting on something of zero-length as $t$ approaches $t_0$, we may use the continuity of the barcode to say that a bar of the form $[2 \pi r_2, 2 \pi r(t_0) + 2 \pi R t_0)$ must exist in $\B^n(h^1_{t_0})$.  So $t_0$ is in $T$ and consequently $T = (0,\bar{t})$.

To finish the proof of our claim, we note that the same argument used in the previous paragraph shows that $\B^n(h^1_{\bar{t}})$ must have a bar of the appropriate form.

\end{proof}

Similar reasoning shows that for times $t$ bigger than $\bar{t}$, we either have a bar of the form $[2 \pi r_2, 2 \pi r(t) + 2 \pi R t)$ or of the form $[2 \pi Rt - m_0r(t), 2 \pi r(t) + 2 \pi R t)$.  Taking $t =1$, we have a bar of the form $[2 \pi r_2, 2 \pi r_1 + 2 \pi R)$ or of the form $[2 \pi R -m_0r_1, 2 \pi r_1 + 2 \pi R)$ in $\B^n(h^1_1) = \B^n(g_1)$.  Figure \ref*{first_homotopy_end} shows how $\B^n(h^1_t)$ can change with time so that $\B^n(h^1_1)$ will have a bar of the form $[2 \pi R -m_0r_1, 2 \pi r_1 + 2 \pi R)$.

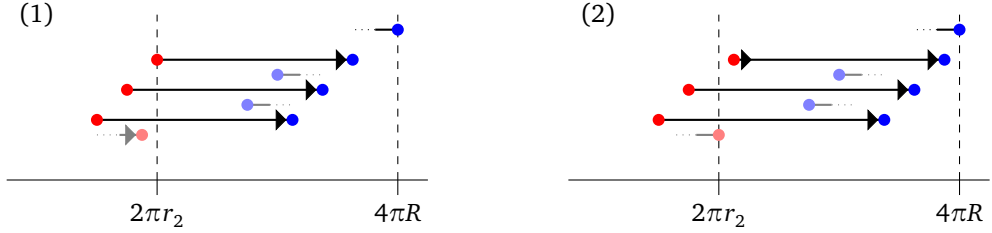
\begin{figure}[ht]
\vspace{0.5cm}
\hspace{3.5cm}
\begin{tikzpicture}[scale=.4]
		\node[] at (-9, 5.5) {$(1)$};
		\VertDash{3}{$4 \pi R$};
		\VertDash{-5}{$2 \pi r_2$};
		\draw[-] (-10, 0) -- (4, 0);
		\GrowParBarsLight{-5.5}{1.5}{red};
		\GrowBars{-7}{2}{6.5}{red}{blue};
		\GrowBars{-6}{3}{6.5}{red}{blue};
		\GrowBars{-5}{4}{6.5}{red}{blue};
		\ParBarsLight{-2}{2.5}{blue}{+};
		\ParBarsLight{-1}{3.5}{blue}{+};
		\ParBars{3}{5}{blue}{-};
\end{tikzpicture}
\hspace{6cm}
\begin{tikzpicture}[scale=.4]
		\node[] at (-9, 5.5) {$(2)$};
		\VertDash{3}{$4 \pi R$};
		\VertDash{-5}{$2 \pi r_2$};
		\draw[-] (-10, 0) -- (4, 0);
		\ParBarsLight{-5}{1.5}{red}{-};
		\GrowBars{-7}{2}{7.5}{red}{blue};
		\GrowBars{-6}{3}{7.5}{red}{blue};
		\GrowBars{-4.5}{4}{7}{red}{blue};
		\draw[-triangle 90] (-4.15, 4) -- +(0.25,0);
		\ParBarsLight{-2}{2.5}{blue}{+};
		\ParBarsLight{-1}{3.5}{blue}{+};
		\ParBars{3}{5}{blue}{-};
\end{tikzpicture}

\vspace{0.5cm}

\caption{How $\B^n(h_t)$ can change between times before and after $\bar{t}$.  In the first picture, the lowest red endpoint corresponds to the degree $n$ action coming from the $y$-axis.  The second picture shows this action eating into our bar after having switched places with the degree $n$ action at $2 \pi r_2$.
}

\label{first_homotopy_end}

\end{figure}

\begin{remark}\label{rmk-exact}  In fact, one may use the homotopy from the proof of Lemma \ref{lemma-cd-n+1} to show that the left-hand endpoint of our bar is \textit{at least} $2 \pi R - m_0r_1$.  In conjunction with the previous paragraph, we conclude that our bar is precisely of the form $[2 \pi R - m_0r_1, 2 \pi r_1 + 2 \pi R)$.
\end{remark}

Next, consider the homotopy $h^2_t$ between $g_1$ and our function $g$ given by

\begin{displaymath}
   h^2_t(r) = \left\{
     \begin{array}{lr}
     \text{max} \{ m_0 (r-r_1) + 2 \pi R(1- 2t), g \}, & 0 \leq r \leq r_1\\
     
     g_1(r), & r_1 \leq r \leq r_3\\
     
     \text{max} \{ m_1 (r-r_3) + 2 \pi R(1- 2t), g\}, & r_3 \leq r \leq R  .
     \end{array}
   \right.
\end{displaymath}

See Figure \ref*{second_homotopy}.

Recall that $2 \pi R - m_0r_1 < 2 \pi R + \eps$.  Our next claim is:

\begin{claim}\label{claim-the-second}
A bar of the form $[c_t, 2 \pi r_1 + 2 \pi R)$, where $c_t < 2 \pi R + \eps$, exists in $\B^n(h^2_{t})$ for all time.

\end{claim}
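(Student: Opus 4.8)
The plan is to run the same open--closed argument used for Claim~\ref{claim-the-first}. I would set
\[
T = \bigl\{\, t \in [0,1] \;\bigm|\; \B^n(h^2_t) \text{ contains a bar } [c_t,\, 2\pi r_1 + 2\pi R) \text{ with } c_t \le 2\pi R - m_0 r_1 \,\bigr\},
\]
and show $T = [0,1]$; since $0 < -m_0 r_1 < \eps$ by (A6), any $t\in T$ then gives $c_t < 2\pi R + \eps$ as the claim demands. Non-emptiness is immediate: $h^2_0 = g_1$, and by Remark~\ref{rmk-exact} the bar $[2\pi R - m_0 r_1,\, 2\pi r_1 + 2\pi R)$ lies in $\B^n(g_1)$, so $0 \in T$.

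The argument rests on three invariants of the homotopy $h^2_t$, all of which I would establish first. (i) $h^2_t$ agrees with $f_k$ on $[r_1,r_3]$ and satisfies $h^2_t(r_1) = 2\pi R$ for every $t$, so the concave down kink of $h^2_t$ at $r_1$ --- with a steep negative right-hand slope and a left-hand slope (equal to $m_0$, or to the left-hand slope of $g$ at $r_1$) lying strictly above $-2\pi$ --- persists for all $t$; hence $2\pi r_1 + 2\pi R$ is a degree $n+1$ action (and a degree $3n$ action) of $h^2_t$ for all $t$. (ii) The line out of the $y$-axis has slope $m_0 \in (-2\pi,0)$ for every $h^2_t$, so $h^2_t(0)$ is a degree $n$ action of $h^2_t$, and it decreases monotonically from $2\pi R - m_0 r_1$ toward $g(0)$ as $t$ grows. (iii) The concave up kink at $r_2$ survives and contributes the degree $n$ action $2\pi r_2 < 2\pi R$. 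Consequently $h^2_t(0)$ and $2\pi r_2$ are the only degree $n$ actions of $h^2_t$ that can serve as the left endpoint of a long bar with right endpoint $2\pi r_1 + 2\pi R$, and both are $\le 2\pi R - m_0 r_1$.

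For openness, fix $t_0 \in T$, choose $\eps' > 0$ small, and take a neighborhood of $t_0$ on which $||h^2_t - h^2_{t_0}||_{L^{\infty}} < \eps'$, so $\eps'$-matchings between the degree $n$ barcodes exist. The bar $[c_{t_0}, 2\pi r_1 + 2\pi R) \in \B^n(h^2_{t_0})$ has length at least $r_1(2\pi + m_0) > 0 \gg 2\eps'$, hence is matched; its right endpoint is also a degree $n+1$ action of $h^2_t$, and any degree $n+1$ action of $h^2_t$ within $\eps'$ of it that differs from $2\pi r_1 + 2\pi R$ comes only from concave up kinks, so by Theorem~\ref{lemma-cu} cannot be a right endpoint of a bar in $\B^n(h^2_t)$. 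Thus the matched bar has right endpoint exactly $2\pi r_1 + 2\pi R$, and its left endpoint --- being a degree $n$ action within $\eps'$ of $c_{t_0} \le 2\pi R - m_0 r_1$ --- must (by (ii), (iii)) be $h^2_t(0)$ or $2\pi r_2$, so $\le 2\pi R - m_0 r_1$; hence $t \in T$. Closedness is exactly as in Claim~\ref{claim-the-first}: at a limit point $t_0$ of $T$ the relevant bars for nearby $t$ have length bounded below by $r_1(2\pi + m_0)>0$, so the continuity of barcodes forces $\B^n(h^2_{t_0})$ to contain a bar with right endpoint $2\pi r_1 + 2\pi R$ and left endpoint $\le 2\pi R - m_0 r_1$, i.e.\ $t_0 \in T$. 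Therefore $T = [0,1]$.

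The delicate step will be the action bookkeeping behind (i)--(iii) for the $\max$-modified functions $h^2_t$: one must verify, using the enumeration (A1)--(A8), that no degree $n+1$ recapping of the $r_3$ kink, no exterior action, and no $y$-axis action lands within $O(\eps)$ of $2\pi r_1 + 2\pi R$ (so that Theorem~\ref{lemma-cu} really pins the right endpoint), and that no degree $n$ action other than $h^2_t(0)$, $2\pi r_2$ (and possibly $2\pi R$) sits in the window just below $2\pi r_1 + 2\pi R$. Since every kink of $g$ lies within $\eps$ of $R$, the dangerous near-coincidences --- e.g.\ a degree $n+1$ recapping of the $r_3$ kink, which can be within $O(N\eps)$ of $2\pi r_1 + 2\pi R$ when $n\gamma = 2\pi N R$ --- have to be excluded by shrinking $\eps$ relative to the fixed data $R,N,\gamma,n$ and by choosing the kinks $r_1,r_2,r_3$ suitably inside $I_k$; this is where the bulk of the work lies.
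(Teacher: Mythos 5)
Your proposal is correct in outline and follows essentially the same route as the paper: an open–closed (connectedness) argument in $t$ along $h^2_t$, with the right endpoint pinned at $2\pi r_1 + 2\pi R$ by Theorem \ref{lemma-cu} together with the bound $\geq 4\pi R$ on exterior degree $n+1$ actions, and the left endpoint controlled because every time-dependent degree $n$ action is non-increasing; the bookkeeping you flag at the end is exactly the content of the paper's bulleted conditions and enumeration (A1)--(A8). The only cosmetic differences are that you seed the argument at $t=0$ via Remark \ref{rmk-exact} with threshold $2\pi R - m_0 r_1$ (the paper instead verifies non-emptiness on a small interval $(0,t_1)$ with threshold $2\pi r_2 + \eps$), and that the paper does not single out $h^2_t(0)$ and $2\pi r_2$ as the only admissible left endpoints but only uses that all moving degree $n$ actions decrease, which is the safer formulation since exterior and other-hump degree $n$ actions can also enter the relevant window.
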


\begin{proof}[Proof of Claim \ref{claim-the-second}]

The idea behind this proof is that, as can be seen from Figure \ref*{second_homotopy}, the only new degree $n+1$ actions which appear and change with time must be decreasing and either come from concave up kinks in our graph or are exterior actions.  With actions coming only from concave up kinks unable to appear in the degree $n$ barcode, and with any degree $n+1$ exterior actions being bigger than or equal to $4 \pi R$ for all time, our specified right-hand endpoint must be maintained.  On the other hand, our left-hand endpoint can only decrease since all degree $n$ actions which change with time are decreasing during this homotopy.  It is an easy exercise to verify that the homotopy $h^2_t$ satisfies these properties.  Figure \ref*{second_homotopy_barcode} shows how $\B^n(h^2_t)$ changes with time.

\begin{remark}
All degree $n+1$ exterior actions being greater than $4 \pi R$ is due to our assumption that $4 \pi R \leq \gamma$.  Hence, we see here one instance of this assumption's necessity.
\end{remark}

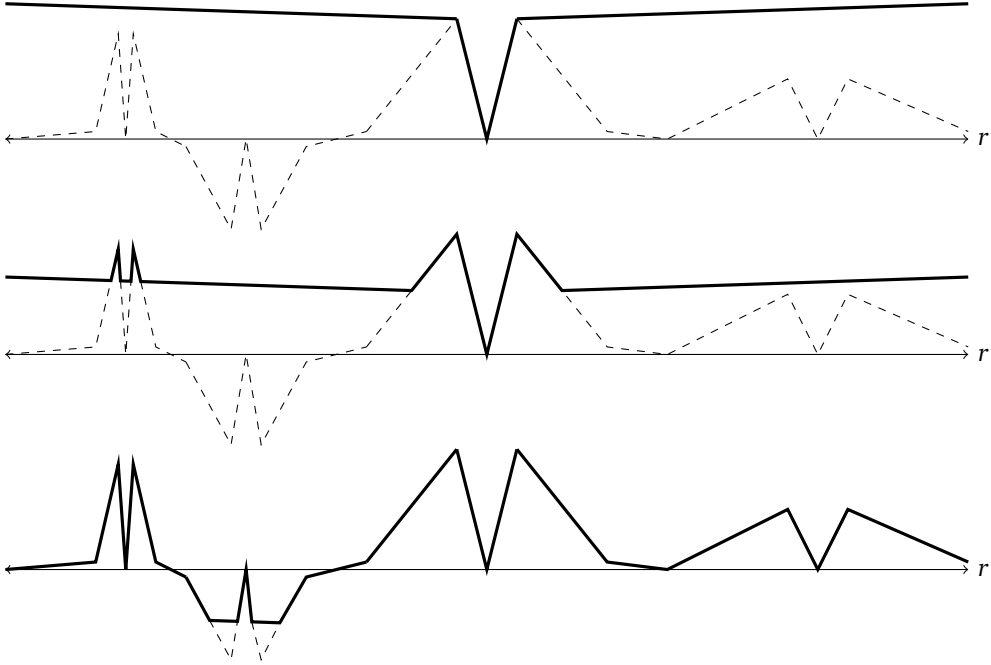
\begin{figure}

\centering

\begin{tikzpicture}[scale=.4]
		\draw[<->] (-16,0) -- (16,0) node[right] {$r$};
		\draw[very thick] (-16, 4.5) -- (-1, 4);
		\draw[very thick] (1, 4) -- (16, 4.5);
		\draw[dashed] (-4,0.25) -- (-1,4);
		\draw[very thick] (-1, 4) -- (0,0) -- (1,4);
		\draw[dashed] (-10,-0.25) -- (-8.5,-3) -- (-8,0) -- (-7.5,-3) -- (-6,-0.25) -- (-4,0.25);
		\draw[dashed] (-16, 0) -- (-13, 0.25) -- (-12.25, 3.5) -- (-12, 0)-- (-11.75, 3.5)-- (-11, 0.25)-- (-10, -0.25);
		\draw[dashed] (1,4) -- (4,0.25) -- (4, 0.25) -- (6, 0) -- (10, 2) -- (11,0) -- (12, 2) -- (16, 0.25);
\end{tikzpicture}

\begin{tikzpicture}[scale=.4]
		\draw[<->] (-16,0) -- (16,0) node[right] {$r$};
		\draw[dashed] (-4,0.25) -- (-1,4);
		\draw[very thick] (-16, 2.575) -- (-12.4905, 2.45802) -- (-12.25, 3.5) -- (-12.1748, 2.44749) -- (-11.826, 2.43587) -- (-11.75, 3.5) -- (-11.5019, 2.42506) -- (-2.5, 2.125) -- (-1, 4) -- (0,0) -- (1,4) -- (2.5, 2.125) -- (16, 2.575);
		\draw[dashed] (-10,-0.25) -- (-8.5,-3) -- (-8,0) -- (-7.5,-3) -- (-6,-0.25) -- (-4,0.25);
		\draw[dashed] (-16, 0) -- (-13, 0.25) -- (-12.4905, 2.45802);
		\draw[dashed] (-12.1748, 2.44749) -- (-12, 0)-- (-11.826, 2.43587);
		\draw[dashed] (-11.5019, 2.42506) -- (-11, 0.25)-- (-10, -0.25);
		\draw[dashed] (1,4) -- (4,0.25) -- (4, 0.25) -- (6, 0) -- (10, 2) -- (11,0) -- (12, 2) -- (16, 0.25);
\end{tikzpicture}

\begin{tikzpicture}[scale=.4]
		\draw[<->] (-16,0) -- (16,0) node[right] {$r$};
		\draw[very thick] (-4,0.25) -- (-1,4);
		\draw[very thick] (-1, 4) -- (0,0) -- (1,4);
		\draw[very thick] (-10,-0.25) -- (-9.21296, -1.6929) -- (-8.2873, -1.7238) -- (-8,0) -- (-7.8125, -1.73958) -- (-6.88036, -1.77232) -- (-6,-0.25) -- (-4,0.25);
		\draw[dashed] (-9.21296, -1.6929) -- (-8.5,-3) -- (-8.2873, -1.7238);
		\draw[dashed] (-7.8125, -1.73958) -- (-7.5, -3) -- (-6.88036, -1.77232);
		\draw[very thick] (-16, 0) -- (-13, 0.25) -- (-12.25, 3.5) -- (-12, 0)-- (-11.75, 3.5)-- (-11, 0.25)-- (-10, -0.25);
		\draw[very thick] (1,4) -- (4,0.25) -- (4, 0.25) -- (6, 0) -- (10, 2) -- (11,0) -- (12, 2) -- (16, 0.25);
\end{tikzpicture}

\caption{The solid graph is $g_1$ in the first picture, while the solid graphs in the second and third pictures are intermediate functions from our homotopy $h^2_t$.}

\label{second_homotopy}

\end{figure}

\vspace{0.25cm}

\begin{figure}[ht]
\vspace{1.75cm}
\hspace{3cm}
\begin{tikzpicture}[scale=.4]
		\node[] at (-9, 8) {$(1)$};
		\draw[dashed] (2.75, 5.75) -- (2.75, 7.5);
		\VertDash{2.75}{$2 \pi r_1 + 2 \pi R$};
		\draw[dashed] (-5, 5.75) -- (-5, 7.5);
		\VertDash{-5}{$2 \pi r_2$};
		\draw[-] (-10, 0) -- (5, 0);
		\ParBarsLight{-5}{1.5}{red}{-};
		\StatBars{-7}{2}{7.75}{red}{blue};
		\StatBars{-6}{3}{7.75}{red}{blue};
		\StatBars{-4.25}{4}{7}{red}{blue};
		\ParBarsLight{-2}{2.5}{blue}{+};
		\ParBarsLight{-1}{3.5}{blue}{+};
		\ParBars{3}{5}{blue}{-};
		\GrowBarsLeft{2.5}{6}{1.5}{blue}{red};
		\GrowBarsLeft{1}{7}{1}{blue}{red};
		\GrowBarsLeftLight{4.25}{5.5}{0.75}{black}{blue};
		\GrowBarsLeftLight{2.75}{4.5}{1.5}{black}{blue};
\end{tikzpicture}
\hspace{6cm}
\begin{tikzpicture}[scale=.4]
		\node[] at (-9, 8) {$(2)$};
		\draw[dashed] (2.75, 5.75) -- (2.75, 7.5);
		\VertDash{2.75}{$2 \pi r_1 + 2 \pi R$};
		\draw[dashed] (-5, 5.75) -- (-5, 7.5);
		\VertDash{-5}{$2 \pi r_2$};
		\draw[-] (-10, 0) -- (5, 0);
		\ParBarsLight{-5}{1.5}{red}{-};
		\StatBars{-7}{2}{7.75}{red}{blue};
		\StatBars{-6}{3}{7.75}{red}{blue};
		\StatBars{-4.25}{4}{7}{red}{blue};
		\ParBarsLight{-2}{2.5}{blue}{+};
		\ParBarsLight{-1}{3.5}{blue}{+};
		\ParBars{3}{5}{blue}{-};
		\GrowBarsLeft{2.5}{6}{3.5}{blue}{red};
		\GrowBarsLeft{1}{7}{3}{blue}{red};
		\GrowBarsLeftLight{4.25}{5.5}{2.75}{black}{blue};
		\GrowBarsLeftLight{2.75}{4.5}{3.5}{black}{blue};
\end{tikzpicture}

\vspace{0.5cm}

\caption{The evolution of $\B^n(h^2_t)$.  Note that the bar on the far right does not have its left-hand endpoint switch with our chosen bar's right-hand endpoint because said left-hand endpoint is an action coming from a concave up kink.}

\label{second_homotopy_barcode}

\end{figure}
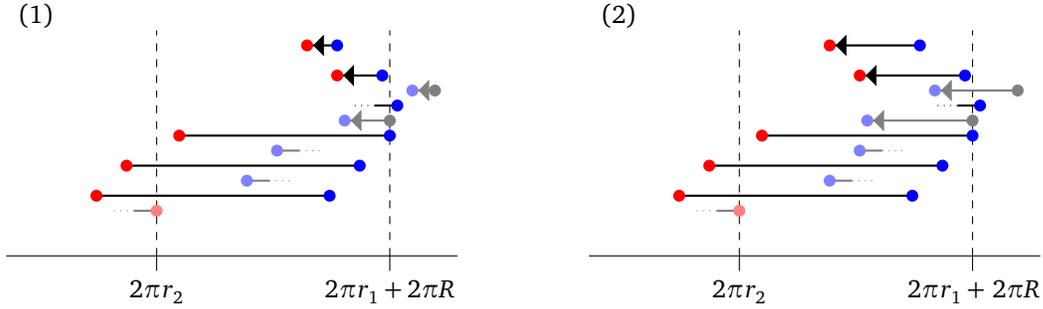

Define $T$ by
\[
T = \{ t \in [0,1] \, | \, [c_t, 2 \pi r_1 + 2 \pi R) \in \B^n(h^2_t) \, , \, c_t \leq 2 \pi r_2 + \eps \}.
\]

Similar to the previous claim's proof, we choose $\eps' > 0$ and an interval of time $(0, t_1)$ so that for any $t \in (0, t_1)$

\begin{itemize}
\item $(0, t_1)$ is small enough so that $||h^2_{0} - h^2_{t}||_{L^{\infty}} < \eps'$.

\item  $(0, t_1)$ is small enough so that no new action values are created.  Hence, we may parameterize all action values as functions of time with domain $(0, t_1)$.

\item  If $c^{n+1}(t)$ is a parameterization of a degree $n+1$ action with domain $(0, t_1)$ which limits to a value in $(2 \pi r_1 + 2 \pi R - \eps', 2 \pi r_1 + 2 \pi R + \eps')$ as $t$ goes to $0$, then said limit is $2 \pi r_1 +2 \pi R$.  Furthermore, if such a $c^{n+1}(t)$ has $c^{n+1}(t') \neq 2 \pi r_1 + 2 \pi R$ for some $t' \in (0, t_1)$, then $c^{n+1}(t)$ is always less than $2 \pi r_1 + 2 \pi R$.

\item  Similarly, if $c^{n}(t)$ is a parameterization of a degree $n$ action with domain $(0, t_1)$ which limits to a value in $(c_{0} - \eps ', c_{0} + \eps ')$ as $t$ goes to $0$, then said limit is $c_{0}$.  Furthermore, if such a $c^{n}(t)$ has $c^{n}(t') \neq c_{0}$ for some $t' \in (0, t_1)$, then $c^{n}(t)$ is always less than $c_{0}$.



\end{itemize}

The third and fourth conditions may be met in this case because our actions are non-increasing with respect to time during $(0, t_1)$.  Note our choice of notation $c_t$ (instead of $c(t)$) for the left endpoint of our bar to avoid confusion with the parametrizations of our actions as mentioned in the second condition above.

From here, we prove that $T \neq \emptyset$.  For any $t'$ in $(0, t_1)$, we have an $\eps'$-matching $\mu_{\eps'}$ between $\B^n(h^2_{0})$ and $\B^n(h^2_{t'})$ which must match $2 \pi r_1 + 2 \pi R$ at time $0$ with something in the interval $(2 \pi r_1 + 2 \pi R - \eps', 2 \pi r_1 + 2 \pi R + \eps')$ at time $t'$.  By the conditions above, the only degree $n+1$ actions in $(2 \pi r_1 + 2 \pi R - \eps', 2 \pi r_1 + 2 \pi R + \eps')$ which are not equal to $2 \pi r_1 + 2 \pi R$ at time $t'$ are ones which change with time; such actions in our action window must correspond to concave up kinks in $h_{t'}$'s graph.  Theorem \ref{lemma-cu} therefore states that these do not enter into $\B^n(h^2_{t'})$, and so $2 \pi r_1 + 2 \pi R$ must be matched by $\mu_{\eps'}$ with itself.

Similarly, $\mu_{\eps'}$ must take $c_{0}$ to an action in $(c_{0} - \eps ', c_{0} + \eps ')$, and by construction the only degree $n$ actions in this interval at time $t'$ which are not equal to $c_{0}$ are those strictly less than it.  Hence, any such $\B^n(h^2_{t'})$ has a bar of the desired form, proving $T \neq \emptyset$.

With $T$ non-empty, it has a supremum $t_s \in [0,1]$, and since $T$ is closed (by an argument similar to the one presented at the end of Claim \ref{claim-the-first}'s proof), $t_s \in T$.  Supposing $t_s \neq 1$, the above argument shows that there exists a $t_1 > t_s$ so that $[t_s, t_1) \in T$, contradicting $t_s$ being a supremum.  Hence $t_s = 1$, and our proof is complete.

\end{proof}

Therefore, $\B^n(g)$ has a bar at least as big as $2 \pi r_1 + 2 \pi R - (2 \pi R + \eps) = 2 \pi r_1 -\eps$.  Any standard perturbation $\tilde{G}$ of $g$ which is less than $\eps$ away in the $C^0$ norm will therefore induce a diffeomorphism having boundary depth at least $2 \pi r_1 - 2 \eps$.  Using the notation of Theorem \ref{theorem-barely}, such a standard perturbation will satisfy $|| \sum_{i=1}^{\infty} a_i \bar{f}_i - \tilde{G}|| < 6 \eps$, and so Theorem \ref{theorem-barely} tells us that the Hamiltonian diffeomorphism generated by $\sum_{i=1}^{\infty} a_i \bar{f}_i$ will have boundary depth at least $2 \pi r_1 - 8 \eps$.  Since $r_1$ is strictly greater than $R - \eps$, this lower bound is strictly greater than $2 \pi R - (2 \pi + 8) \eps > 2 \pi R - (4 \pi + 7) \eps$.  This completes the proof of the case that $N \neq 0$, $\lambda >0$, and $n \gamma - 2 \pi N R \geq 0$.

\begin{remark}


It is now possible to see why our proofs cannot assert Theorem \ref{main-theorem} with $[0,1]^{\infty}$ replaced by $[0, C]^{\infty}$ for some $C > 1$.  Indeed, suppose we tried, so that our function $g_1$ has maximum $2 C \pi R > 2 \pi R$.  Then as explained in Remark \ref{rmk-exact}, our bar of choice in $\B^n(g_1)$ with right-hand endpoint $2 \pi r_1 + 2 C \pi R$ would have left-hand endpoint at least $2 C \pi R - m_0r_1$.  In this best case scenario of $[2 C \pi R - m_0r_1, 2 \pi r_1 + 2 C \pi R)$ being a bar in $\B^n(g_1)$, the reasoning behind Claim \ref{claim-the-second} would still give $2 \pi R$ as an approximate lower bound on the boundary depth of $\sum_{i = 1}^{\infty} a_i \bar{f}_i$.

In fact, in the case that $B(2\pi R)$ is displaceable, we \textit{must} have some impediment to $\sum_{i = 1}^{\infty} a_i \bar{f}_i$ having arbitrarily large boundary depth, for the boundary depth of a Hamiltonian diffeomorphism is bounded above by twice the displacement energy of its support (see \cite{MikeSolo}).
\end{remark}

The rest of this section is devoted to describing the changes necessary to the above proof when dealing with the various other cases.  The only case necessitating any significant changes is the last one, when $N \neq 0$ and $\lambda = 0$.

\vspace{0.5cm}

\textbf{Case 2}  $N = 0$.

The proof given for case 1 can be applied almost directly to the case of $N = 0$, which by our monotonicity assumption implies that $M$ is symplectically aspherical; indeed, the only difference is that our enumeration of $h^1_t$'s actions would exclude those described by (A1), (A2), (A4) and (A8) and only consider the case $z = 0$ for those described by (A3) and (A5).

\vspace{0.5cm}

\textbf{Case 3} $N \neq 0, \lambda > 0,$ and $n\gamma - 2 \pi N R < 0$.

In the case that $N \neq 0$, $\lambda > 0$, and $n \gamma - 2 \pi N <0$, we first pick our $\eps$ to ensure that $n\gamma - 2 \pi N (R - \eps) < 0$ and construct our functions $f_i$.  Again assuming that $a_k = 1$, we choose $g_1$ and $g$ to be as before, but define, $r(t)$, $g_0$, and our first homotopy $h^1_t$ by

\[
r(t) = (1-t)r_2 + tr_3,
\]

\begin{displaymath}
   g_0(r) = \left\{
     \begin{array}{lr}
       m_0 (r-r_1) + 2 \pi R, & 0 \leq r \leq r_1\\
       f_k(r), & r_1 \leq r \leq r_2\\
       m_1 (r - r_2) , & r_2 \leq r \leq R.  
     \end{array}
   \right.
\end{displaymath}

and

\begin{displaymath}
   h^1_t(r) = \left\{
     \begin{array}{lr}
     g_1(r), & 0 \leq r \leq r(t)\\
     m_1(r-r(t))+2\pi Rt, & r(t) \leq r \leq R.\\
     \end{array}
   \right.
\end{displaymath}
Our strategy is to now use the homotopy $h^1_t$ and the continuity of the barcode to establish the existence of a bar either of the form $[-2\pi R + 4 \pi \delta_3, 2 \pi \delta_3)$ or $[-2 \pi r_2, 2 \pi \delta_3)$ in $\B^{-3n}(g_1)$.  We enumerate the relevant degree $-3n$ and $-3n + 1$ actions for $h^1_t$ below, where our notation is as before except that $r(t)$ now represents the right-most kink of $h^1_t$'s graph.

\begin{enumerate}

\item[(B1)]  Degree $-3n+1$ actions from $r_1$ have the form

\[
2 \pi R - 2 \pi r_1 + \tfrac{2}{D}z(n \gamma - 2 \pi N r_1), 
\]

which equals

\[
2 \pi \delta_1 + \tfrac{2}{D}z(n \gamma - 2 \pi N r_1)
\]
if we let $R - \delta_1 = r_1$.  Here, we must have $z<0$.  With $n \gamma - 2 \pi N r_1 < 0$, any such action must be strictly greater than $2 \pi \delta_1$.

\item[(B2)]  Degree $-3n$ actions coming from $r_1$ are of the form

\[
2 \pi R - 4 \pi r_1 + \tfrac{2}{D}z(n \gamma - 2 \pi N r(t)),
\]  

which equals

\[
-2 \pi R + 4 \pi \delta_1 + \tfrac{2}{D}z(n \gamma - 2 \pi N r(t)),
\]

with $z < 0$.  Hence, all such actions are no less than $-2 \pi R + 4 \pi \delta_1$.

\item[(B3)]  Degree $-3n + 1$ actions from $r(t)$ are of the form

\[
2 \pi R t - 2 \pi r(t) + \tfrac{2}{D}z(n \gamma - 2 \pi N r(t)).
\]
We must have $z \geq 0$.  Choosing $z = 0$ here gives the action which will become the right endpoint of our bar.  Note that this action is $2 \pi \delta_3$ at $t = 1$, which is strictly less than any degree $-3n+1$ action from (B1).

\item[(B4)]  A degree $-3n$ action coming from $r(t)$ will have the form

\[
2 \pi Rt - 4 \pi r(t) + \tfrac{2}{D}z(n \gamma - 2 \pi N r(t)).
\]
We must have $z \geq 0$ here (more work than usual must be done to conclude this; see the reasoning following this enumeration).  In particular, any such action is less than the action we get when we take $z = 0$: $2 \pi Rt - 4 \pi r(t)$.  At $t = 1$, this is $-2 \pi R + 4 \pi \delta_3$.  This is the action which may overtake our initial choice of degree $-3n$ action as $t$ gets close to $1$.

\item[(B5)]  A degree $-3n$ action coming from $r_2$ will be of the form

\[
-2 \pi r_2 + \tfrac{2}{D}z(n \gamma - 2 \pi N r_2).
\]
Choose $z = 0$ (giving $-2 \pi r_2 = -2 \pi R + 2 \pi \delta_2$) to get the left endpoint of our bar for times $t$ far enough away from $1$.  Note that this is strictly less than any degree $-3n$ action from (B2).

\item[(B6)]  Any degree $-3n$ action coming from the $y$-axis will be at most

\[
2 \pi R - m_0(r_1) - \gamma \leq -2 \pi R -m_0(r_1)
\]
since we assume that $4 \pi R \leq \gamma$.  Making $|m_0|$ smaller if necessary, we can ensure that this is strictly less than our chosen degree $-3n$ action from (B5).

\item[(B7)]  An exterior degree $-3n$ action will be at most

\[
2 \pi R t + m_1(R - r(t)) - \gamma \leq 2 \pi R t + m_1(R - r(t)) - 4 \pi R
\]
for all times $t$.  Taking $t = 1$ gives $-2 \pi R + m_1(R - r_3) = -2 \pi R + m_1 \delta_3$ as our upper bound, and with $m_1 < 1$, this too is strictly less than our degree $-3n$ action from (B5).

\item[(B8)]  Similarly, any exterior degree $-3n+1$ actions at time $t$ will be at most

\[
2 \pi R t + m_1(R-r(t)) - 4 \pi R.
\]

With $m_1$ small, this is strictly less than our chosen degree $-3n + 1$ action from (B3) for all time.

\end{enumerate}

Our claim concerning the possible values of $z$ for (B3) is due to the following.  In the case of (B3), our parametrization for $l$ (compare with the analysis preceding (A1)) is

\[
-l = -2 - \tfrac{2N}{D}z;
\]
since we are at $r(t)$, we must have $-l < 0$.  This gives that

\[
-2 - \tfrac{2N}{D}z < 0,
\]
and since $\tfrac{2N}{D}$ is a positive integer, we conclude that $z \geq -1$.  Note that we may only include the case $z = -1$ if $2N = D$, and the equality $2N = D$ contradicts the assumptions $n \gamma - 2 \pi N R < 0$ and $4 \pi R \leq \gamma$.  Indeed, the latter assumption yields the first step in the following chain of inequalities:

\begin{align*}
0 & \leq \gamma - 2 \pi R\\
\, & \leq \tfrac{2n}{D}\gamma - 2 \pi R\\
\, & = \tfrac{2n}{D} \gamma - \tfrac{2N}{D}2 \pi R\\
\, & = \tfrac{2}{D}(n \gamma - 2 \pi N R),
\end{align*}
where the second inequality from above uses that $\tfrac{2n}{D}$ is a positive integer.

Choose the degree $-3n+1$ action occurring at $r(t)$ with $z = 0$ (so $2 \pi R t - 2 \pi r(t)$).  Arguing as before, we may assume that $-2 \pi r_2$ is not a degree $-3n+1$ action for $h^1_0$. Use Theorem \ref{lemma-cd-n-1} and the continuity in $t$ of $B^{-3n}(h^1_t)$ to pair our chosen action with the degree $-3n$ action occurring at $r_2$ ($-2 \pi r_2$) for values of $t$ close to $0$, then follow the same reasoning as before to conclude that a bar of the form $[-2 \pi R + 4 \pi \delta_3, 2 \pi \delta_3)$ or $[-2 \pi r_2, 2 \pi \delta_3)$ exists in $\B^{-3n}(g_1)$.  Finally, choose $h^2_t$ as before and follow the reasoning previously given, but employing Theorem \ref{lemma-cu} and the fact that all exterior degree $-3n+1$ actions are strictly less than our chosen one as we perform $h^2_t$, to deduce that $\B^{-3n}(g)$ has a bar of length at least $2 \pi r_2 - 2 \pi \delta_3$.  We may conclude from here that the boundary depth of the Hamiltonian diffeomorphism generated by $\sum_{i = 1}^{\infty} a_i \bar{f}_i$ is at least $2 \pi R - (4 \pi + 7) \eps$.

\vspace{0.5cm}

\textbf{Case 4} $N \neq 0$ and $\lambda < 0$.

For the case of $N \neq 0$ with $\lambda < 0$, we choose our piecewise linear functions and homotopies exactly as in the case of $N \neq 0, \lambda > 0$, and $n \gamma - 2 \pi N R \geq 0$.  We list out the relevant degree $n+1$ and $n$ actions for $h^1_t$ below, from which it should be easy to deduce the appropriate lower bound for the boundary depth.

\begin{enumerate}

\item[(C1)]  Any degree $n+1$ action coming from $r_3$ will be of the form

\[
4 \pi R -2 \pi \delta_3 + \tfrac{2}{D}z(-n\gamma - 2 \pi N (R- \delta_3)).
\]
and we must have $z > 0$.  Since $\tfrac{2}{D}(-n \gamma - 2 \pi N (R - \delta_3))$ is strictly less than $-\gamma \leq - 4 \pi R$, any action described here is negative.

\item[(C2)]  Any degree $n$ action coming from $r_3$ will be of the form

\[
2 \pi R + \tfrac{2}{D}z(-n\gamma - 2 \pi N r_3),
\]
with $z > 0$.  Similar to the case of (B1), any action here is negative.

\item[(C3)]  Any degree $n+1$ coming from $r(t)$ will be of the form

\[
2 \pi r(t) + 2 \pi Rt + \tfrac{2}{D}z(-n \gamma - 2 \pi N r(t))
\]
where we must have $z \leq 0$.  Choose $z = 0$ to get our right-hand endpoint.

\item[(C4)]  Any degree $n$ action coming from $r(t)$ will be of the form

\[
2 \pi Rt + \tfrac{2}{D}z(-n\gamma - 2 \pi N r(t)),
\]
where we must have $z < 0$.  Hence, all actions here are at least as big as $2 \pi Rt + \gamma \geq 2 \pi R t + 4 \pi R$.

\item[(C5)]  Any degree $n$ action coming from $r_2$ will be of the form

\[
2 \pi r_2 + \tfrac{2}{D}z (-n \gamma - 2 \pi N r_2).
\]
Choose $z = 0$ here to get our initial left-hand endpoint.

\item[(C6)]  We have a degree $n$ action coming from the $y$-intercept of the form $2 \pi Rt - m_0 r(t)$.  By adjusting $m_0$ if necessary, we may assume that $0 < -m_0 r(1) < \eps$.  This might eventually overtake our initial choice of degree $n$ action.

\item[(C7)]  We have a degree $n$ exterior action of $2 \pi R + m_1(R-r_3)$, while all others will be no more than $2 \pi R + m_1(R - r_3) - \gamma \leq  -2 \pi R + m_1(R - r_3)$.  Note that these actions never equal our degree $n$ action of choice.

\item[(C8)]  Any degree $n+1$ exterior actions will be no more than $ -2 \pi R + m_1(R - r_3)$, which is strictly less than our degree $n+1$ action of choice for all time.

\end{enumerate}

\vspace{0.5cm}

\textbf{Case 5} $N \neq 0$ and $\lambda = 0$.

Finally, we deal with the case that $\lambda = 0$.  Where $g_1$ is as always, our strategy is to again establish the existence of a bar of the appropriate length in $\B^{-3n}(g_1)$ via the homotopy $h^1_t$ given for the case of $N \neq 0$ and $n \gamma - 2 \pi N R <0$.  The actions are given below.

\begin{enumerate}

\item[(D1)]  Degree $-3n+1$ actions from $r_1$ have the form

\[
2 \pi R - 2 \pi r_1 + \tfrac{2}{D}z(- 2 \pi N r_1), 
\]

which equals

\[
2 \pi \delta_1 + \tfrac{2}{D}z(- 2 \pi N r_1),
\]
if we let $R - \delta_1 = r_1$.  Here, we must have $z<0$.  With $- 2 \pi N r_1 < 0$, any such action must be strictly greater than $2 \pi \delta_1$.

\item[(D2)]  Degree $-3n$ actions coming from $r_1$ are of the form

\[
2 \pi R - 4 \pi r_1 + \tfrac{2}{D}z( - 2 \pi N r(t)),
\]  

which equals

\[
-2 \pi R + 4 \pi \delta_1 + \tfrac{2}{D}z ( - 2 \pi N r(t)),
\]

with $z < 0$.  Hence, all such actions are no less than $-2 \pi R + 4 \pi \delta_1$.

\item[(D3)]  Degree $-3n + 1$ actions from $r(t)$ are of the form

\[
2 \pi R t - 2 \pi r(t) + \tfrac{2}{D}z(- 2 \pi N r(t)).
\]
We must have $z \geq 0$.  Choosing $z = 0$ here gives the action which will become the right endpoint of our bar.

\item[(D4)]  A degree $-3n$ action coming from $r(t)$ will have the form

\[
2 \pi Rt - 4 \pi r(t) + \tfrac{2}{D}z(- 2 \pi N r(t)).
\]
We must have $z \geq -1$ here.

\item[(D5)]  A degree $-3n$ action coming from $r_2$ will be of the form

\[
-2 \pi r_2 + \tfrac{2}{D}z( - 2 \pi N r_2).
\]
Choose $z = 0$ to get the left endpoint of our bar for times $t$ far enough away from $1$.

\item[(D6)]  Any degree $-3n$ action coming from the $y$-axis will be precisely

\[
2 \pi R - m_0(r_1).
\]

\item[(D7)]  An exterior degree $-3n$ action will be equal to

\[
2 \pi R t + m_1(R - r(t)).
\]
for all times $t$.

\item[(D8)]  Similarly, any exterior degree $-3n+1$ actions will be equal to

\[
2 \pi R t + m_1(R - r(t)).
\]

\end{enumerate}

What makes this case slightly more difficult than the others occurs when $N = 1$.  Supposing so, the degree $-3n$ action given by (D4) with $z = -1$ will be equal to our chosen degree $-3n+1$ action for all time, so we may not apply Theorem \ref{lemma-cd-n-1} directly.  However, another energy argument as presented in the proof of Lemma \ref{lemma-cd-n+1} when $\lambda = 0$ shows that we must still have our degree $-3n+1$ action pairing with the usual degree $-3n$ action for times $t$ close to zero.  The rest of the proof for this case matches those of the other cases, though now we must worry about an exterior degree $-3n+1$ action overtaking our chosen degree $-3n+1$ action as we perform $h^2_t$.  But this would give a bar of the form $[-2 \pi R + 4 \pi \delta_3, g(0))$ in $\B^{-3n}(g)$; this bar will be of length at least $2 \pi R - 4 \pi \delta_3$, and so we may say that our Hamiltonian diffeomorphism in question has boundary depth at least $2 \pi R - (4 \pi + 7)\eps$.

\end{proof}

\end{section}

\begin{section}{Adaptation:  Embeddings of $\R \oplus [0,1]^{\infty}$}\label{Leftovers}

Let $(M, \omega)$ and $B(2 \pi R)$ be as in the statement of Theorem \ref{quasimorphism-version}.  A consequence of the discussion given in \cite{EP} shows that, due to the existence of a stable homogeneous Calabi quasi-morphism $\mu$, an autonomous function $F$ supported in $B(2 \pi R)$ will induce a Hamiltonian diffeomorphism $\phi_F$ with Hofer norm at least $\tfrac{1}{\text{Vol}(M)}|\int_{M} F \, \omega^n|$.  Choosing such an $F$ with $\int_M F \omega^n \neq 0$, we may construct the one-parameter family of Hamiltonian diffeomorphisms $\phi_{sF}$ whose Hofer norm grows at least linearly in $s$, i.e. $||\phi_{sF}||_H \geq \tfrac{s}{\text{Vol}(M)} | \int_{M}F \, \omega^n|$.

Setting $\psi_s = \phi_{sF}$ for all $s \in \R$, results from \cite{EP} also show that the path $\{ \psi_s \}$ in $\widetilde{Ham}(M, \omega)$ also has $|| \{  \psi_s \} ||_H \geq  \tfrac{s}{\text{Vol}(M)} |\int_{M}F \, \omega^n|$ when $\widetilde{Ham}(M, \omega)$ admits a stable homogeneous Calabi quasi-morphism $\tilde{\mu}$.  On the other hand, if $\widetilde{Ham}(M, \omega)$ does not admit such a $\tilde{\mu}$, we may still use Proposition 7.1.A from \cite{Polt} to assert that this linear growth still occurs if $M$ has a \textit{stably non-displaceable} Lagrangian $L$ with $supp(F) \cap L = \emptyset$.  With this discussion out of the way, we now begin to prove Theorems \ref{quasimorphism-version} and \ref{universal-analogue}.  

First, suppose $Ham(M, \omega)$ admits a Calabi quasi-morphism $\mu$ and that our symplectic ball $B(2 \pi R)$ is displaceable in $M$.  Pick an $\eps >0$ and construct the functions $\bar{f}_i$, $i \geq 1$, which define our embedding from Theorem \ref{main-theorem}.  Afterwards, define a function $f_0:  [0, R] \rightarrow \R$ satisfying

\begin{itemize}
\item $f_0$ is $0$ at $R-3\eps$ and on the interval $[R-\eps - \delta, R]$ with $\delta$ very small.

\item $f_0$ is $2 \pi R$ at $1 - 2\eps$.

\item $f_0$ is $2 \pi R$ on the interval $[0,R-4 \eps]$.

\item $f_0$ is linear and increasing on $[R - 3\eps, R - 2\eps]$ with slope an irrational multiple of $2\pi$.

\item $f_0$ is linear and decreasing, with slopes irrational multiples of $2\pi$, on $[R - 4\eps, R - 3\eps]$ and $[R - 2\eps, R - \eps - \delta]$.

\end{itemize}

The integral over $M$ of the induced $C^0$ function $F_0: M \rightarrow \R$ is strictly greater than $2 \pi R\cdot \text{Vol}(B_{R- 4\eps})$, where $B_{R - 4 \eps}$ is an abbreviation of $B(2 \pi (R - 4 \eps))$.  Therefore, we may choose a smoothing $\bar{f}_0$ of $f_0$, also supported in $[0, R - \eps]$, so that the induced Hamiltonian $\bar{F}_0$ has its integral satisfying the same inequality.

Now consider $\R \oplus [0,1]^{\infty}$, the set of all sequences $a = \{ a_i \}_{i \geq 0}$ with $a_0 \in \R$, $a_i \in [0,1]$ when $i \geq 1$, and with only finitely many non-zero entries.  Then similar to our definition of $\Phi$, we may define a new map $\overline{\Phi}: \R \oplus [0,1]^{\infty}$ into $Ham(M, \omega)$ by making $\overline{\Phi}(a)$ equal to the Hamiltonian diffeomorphism generated by $\sum_{i=0}^{\infty}a_i \bar{f}_i$.  We now prove Theorem \ref{quasimorphism-version}.  
\begin{proof}[Proof of Theorem \ref{quasimorphism-version}]

Let $\bar{F}_i$, $i \geq 0$ be the Hamiltonians induced by the $\bar{f}_i$, and use $V_{4 \eps}$ to denote the difference in the symplectic volumes of $B(2 \pi R)$ and $B_{R - 4 \eps}$.  Similar to the proof of our main theorem, Theorem \ref{quasimorphism-version} will be established if we show

\[
\frac{2 \pi R \cdot \text{Vol}(B_{R - 4 \eps})}{\text{Vol}(M)} \bigg( \text{max}_{i \geq 0} \{ |a_i| \} \bigg) - \text{max} \left\{ (4 \pi + 7) \eps, \, \frac{2 \pi R \cdot V_{4 \eps}} {\text{Vol}(M)}  \right\}  \leq ||\phi_{\bar{F}}||_H
\]

for $\bar{F} = \sum_{i=0}^{\infty}a_i\bar{F}_i$, where $a = \{ a_i \}_{i \geq 0} \in \R \oplus [-1, 1]^{\infty}$.  This will give us the left-hand inequality of our theorem, while the right-hand inequality is again obvious.  In the case that $|a_0| \neq \text{max}_{i \geq 0} \{ | a_i | \}$, we may use the proofs of Theorem \ref{pen-theorem} and \ref{big-lemma} to again arrive at the inequality

\[
2 \pi R (\text{max}_{i \geq 0} \{ |a_i | \} ) - ( 4 \pi + 7)\eps \leq ||\phi||_H,
\]

from which our desired inequality follows.  On the other hand, suppose $|a_0| = \text{max}_{i \geq 0}|a_i|$.  Then the Hamiltonian $\sum_{i=1}^{\infty}a_i \bar{F}_i$ on $M$ is supported in a region of $M$ with volume $V_{4 \eps}$ and has absolute value bounded above by $2 \pi R$.  From the above discussion, we therefore have

\begin{align*}
||\phi_{\bar{F}}||_H & \geq \frac{1}{\text{Vol}(M)} \bigg( | \smallint \bar{F} \, \omega^n | \bigg) \\
\, & \, \\
\, & \geq \frac{1}{\text{Vol}(M)} \left( | \smallint a_0 \bar{F}_0 \, \omega^n | \, - \, | \smallint \sum_{i=1}^{\infty}a_i\bar{F}_i \, \omega^n | \right) \\
\, & \, \\
\, & \geq \frac{1}{\text{Vol}(M)} \left( |a_0| \cdot \smallint \bar{F}_0 \, \omega^n \, - \, \smallint |\sum_{i=1}^{\infty}a_i\bar{F}_i| \, \omega^n \right) \\
\, & \, \\
\, & \geq \frac{1}{\text{Vol}(M)} \Bigg( |a_0| \cdot 2 \pi R \cdot \text{Vol}(B_{R- 4\eps}) \, - \, 2 \pi R \cdot V_{4 \eps} \Bigg) \\
\, & \, \\
\, & = \frac{2 \pi R \cdot \text{Vol}(B_{R - 4 \eps})}{\text{Vol}(M)} \bigg( |a_0| \bigg) - \frac{2 \pi R \cdot V_{4 \eps}} {\text{Vol}(M)}.
\end{align*}

\end{proof}

As for Theorem \ref{universal-analogue}, take our functions $\bar{F}_i, i \geq 0$ as before, and assume that $(M, \omega)$ and $B(2 \pi R)$ satisfy the appropriate hypotheses.  Then for $a \in \R \oplus [0,1]^{\infty}$, set $\widetilde{\Phi}(a)$ equal to the \textit{path} of Hamiltonian diffeomorphisms generated by the Hamiltonian $\sum_{i = 0}^{\infty} a_i \bar{F}_i$.  With $\widetilde{\Phi}$ so defined, the proof of Theorem \ref{universal-analogue} follows that of Theorem \ref{quasimorphism-version}, thanks in part to the obvious inequality

\[
|| \phi_1 ||_H \leq \widetilde{|| \{ \phi_t \} ||}_H
\]
for an element $\{ \phi_t \}$ of $\widetilde{Ham}(M, \omega)$.

\end{section}

\newpage

\end{document}